\numberwithin{equation}{section}
\renewcommand{\subsection}[1]{\hspace{-\parindent}\refstepcounter{subsection}{\bf (\arabic{section}.\alph{subsection}) #1.}\addcontentsline{toc}{subsection}{\bf #1.}}
\newenvironment{nouppercase}{%
  \renewcommand{\uppercasenonmath}[1]{}}{}
\newtheorem{thm}{Theorem}[section]
\newtheorem{theorem}[thm]{Theorem}
\newtheorem{corollary}[thm]{Corollary}
\newtheorem{definition}[thm]{Definition}
\newtheorem{remark}[thm]{Remark}
\newtheorem{convention}[thm]{Convention}
\newtheorem{proposition}[thm]{Proposition}
\newtheorem{example}[thm]{Example}
\newtheorem{lemma}[thm]{Lemma}
\newtheorem{setup}[thm]{Setup}
\newtheorem*{claim*}{Claim} 
\newtheorem*{lemma*}{Lemma}
\newcommand{\bC}{{\mathbb C}}
\newcommand{\bQ}{{\mathbb Q}}
\newcommand{\bR}{{\mathbb R}}
\newcommand{\bZ}{{\mathbb Z}}
\newcommand{\scrA}{\EuScript A}
\newcommand{\scrB}{\EuScript B}
\newcommand{\scrC}{\EuScript C}
\newcommand{\scrF}{\EuScript F}
\newcommand{\scrI}{\EuScript I}
\newcommand{\scrK}{\EuScript K}
\newcommand{\scrM}{\EuScript M}
\newcommand{\scrT}{\EuScript T}
\newcommand{\scrX}{\EuScript X}
\newcommand{\scrY}{\EuScript Y}
\newcommand{\iso}{\cong}
\newcommand{\htp}{\simeq}
\renewcommand{\hom}{\mathit{hom}}
\newcommand{\Ob}{\mathit{Ob}}
\newcommand{\Tw}{\mathit{Tw}}
\newcommand{\Mod}{\mathit{Mod}}
\newcommand{\EqTw}{\mathit{EqTw}}
\newcommand{\EqMod}{\mathit{EqMod}}
\title[Picard-Lefschetz theory]{\Large\larger\rm Picard-Lefschetz theory and dilating $\bC^*$-actions}
\author{Paul Seidel}
\begin{document}
\begin{nouppercase}
\maketitle
\end{nouppercase}

\begin{abstract}
We consider $\bC^*$-actions on Fukaya categories of exact symplectic manifolds. Such actions can be constructed by dimensional induction, going from the fibre of a Lefschetz fibration to its total space. We explore applications to the topology of Lagrangian submanifolds, with an emphasis on ease of computation.
\end{abstract}

\setcounter{section}{-1}
\section{Introduction\label{sec:intro}}

It has been gradually recognized that certain classes of non-closed symplectic manifolds admit symmetries of a new kind. These symmetries are not given by groups acting on the manifold, but instead appear as extra structure on Floer cohomology, or more properly on the Fukaya category. The first example may have been the bigrading on the Floer cohomology of certain Lagrangian spheres in the Milnor fibres of type $(A)$ hypersurface singularities in $\bC^n$, described in \cite{khovanov-seidel98} (it turned out later \cite[Section 20]{seidel04} that this is compatible with the $A_\infty$-structure of the Fukaya category only if $n \geq 3$). The geometric origin of such symmetries (on the infinitesimal level) has been studied in \cite{seidel-solomon10}, with applications in \cite{seidel13, lekili-pascaleff13, abouzaid-smith13}. A roadmap is provided (via mirror symmetry) by the theory of equivariant coherent sheaves, and its applications in algebraic geometry and geometric representation theory; the relevant literature is too vast to survey properly, but \cite{polishchuk08, elagin12} have been influential for the developments presented here. 

In \cite{seidel12}, the example from \cite{khovanov-seidel98} was used as a test case for talking about such symmetries in an algebraic language of $A_\infty$-categories with $\bC^*$-actions. Here, we generalize that approach, and combine it with the symplectic version of Picard-Lefschetz theory \cite{seidel04}. Recall that classical Picard-Lefschetz theory provides (among other things) a way of computing the intersection pairing in the middle-dimensional homology of an affine algebraic variety, by induction on the dimension. As one consequence of our construction, one gets a similar machinery for defining and computing algebraic analogues of the $q$-intersection numbers from \cite{seidel-solomon10}. Aside from their intrinsic interest, these $q$-intersection numbers have implications for the topology of Lagrangian submanifolds. These are similar in spirit to those derived in \cite{seidel13}, but benefit from the more rigid setup of $\bC^*$-actions, as well as the greater ease of doing computations in a purely algebraic framework. In particular, Example \ref{th:q-mirrorp2} would be out of reach of the methods in \cite{seidel13}, since those only involved the first derivative in the equivariant parameter $q$ around $q = 1$, whereas \eqref{eq:determinant} vanishes at least to order $2$ at that point. Another noteworthy comparison is \cite{lekili-maydanskiy12}, which contains an example of a non-existence theorem for Lagrangian submanifolds whose statement is of a similar kind to that in Example \ref{th:q-mirrorp2}. However, the proof in \cite{lekili-maydanskiy12} relies on a classification of spherical objects \cite{ishii-ueda-uehara10}, which is difficult to generalize to other situations.

There is a foundational question here, which is what one really means by ``the Fukaya category admits a $\bC^*$-action''. Here, we use a simple ad hoc definition, which is not intrinsic: it amounts to saying that the Fukaya category can be embedded into a larger category, and that the larger category carries a $\bC^*$-action in a more naive sense. Certain questions, such as the persistence of the symmetry under deformations of the symplectic form, cannot be meaningfully addressed in this framework. However, at present these are mostly theoretical shortcomings: while there are definitions of an action of $\bC^*$ (or more general algebraic group) on an $A_\infty$-category which are more satisfying, they have not yet led to substantially stronger applications in symplectic topology. 

The structure of this paper is as follows:
\begin{itemize}\itemsep1em 
\item In Section \ref{sec:review}, we focus on the $q$-intersection numbers and their applications, leaving the categorical structures out of the picture as much as possible. This allows us to present the theory as a natural extension of classical Picard-Lefschetz theory, even though certain notions must remain temporarily unexplained. Several concrete examples are considered.

\item Sections \ref{sec:background} and \ref{sec:tools} collect the necessary notions from homological algebra. All of them are quite basic, except for a construction of equivariant $A_\infty$-modules which we quote from \cite{seidel12}. The main feature of the exposition is that directed $A_\infty$-categories play a preferred role.

\item Section \ref{sec:fukaya} applies the algebraic theory to Fukaya categories. After a review of some material about Lefschetz fibrations from \cite{seidel04}, we derive the general results stated in Section \ref{sec:review} (the core of the argument is in Section \ref{subsec:proofs}), and also complete the computations required in our examples.
\end{itemize}

{\em Acknowledgments.} I owe a substantial debt to Mohammed Abouzaid, who suggested that I should look at Lefschetz fibrations whose fibres admit $\bC^*$-actions. Partial support was provided by NSF grants DMS-1005288 and DMS-1500954, and by the Simons Foundation through a Simons Investigator grant. I would also like to thank Boston College, where a large part of the paper was written, for its hospitality.
\section{$q$-intersection numbers\label{sec:review}}

\subsection{Classical Picard-Lefschetz theory\label{subsec:classical}}
Picard-Lefschetz theory provides a way of analyzing the topology of a class of manifolds (complex algebraic varieties, as well as certain complex analytic and symplectic manifolds). We will be particularly interested in what the theory has to say about the middle-dimensional homology and its intersection pairing.

\begin{setup} \label{th:topological-lefschetz}
We will work with smooth Lefschetz fibrations
\begin{equation} \label{eq:lefschetz}
\pi: E \longrightarrow D,
\end{equation}
where the base $D$ is a closed oriented disc, and the total space $E$ an oriented compact $2n$-manifold with corners. The local behaviour of $\pi$ is as follows:
\begin{itemize}
\itemsep0.5em
\item Near a regular point, the local model for \eqref{eq:lefschetz} is a trivial fibration with fibre  $\bR^{2n-2-k} \times (\bR^+)^k$ for some $k$ ($k = 0$ is a fibrewise interior point, $k>0$ a fibrewise boundary point);
\item Critical points must lie in the interior of $E$, and their image must lie in the interior of $D$. At most one such point may lie in each fibre. Finally, around each critical point, there should be oriented complex coordinates on $E$ and $D$, in which $\pi(x_1,\dots,x_n) = x_1^2 + \cdots + x_n^2$.
\end{itemize}
We fix a base point $\ast \in \partial D$, and denote the fibre over that point by $F = E_\ast$. This is a $(2n-2)$-dimensional manifold with corners.
\end{setup}

Away from the critical points, one can equip the fibration with a horizontal distribution (a subspace of the tangent space complementary to $\mathit{ker}(D\pi) \subset TE$) which is parallel to all the boundary strata of the fibres. This provides parallel transport maps, away from the singular fibres.

\begin{remark}
In most algebro-geometric applications, one starts with an open algebraic variety and a regular function on it, satisfying suitable conditions. The setup \eqref{eq:lefschetz} is then recovered by passing to a (carefully chosen) large compact subset. When talking about Picard-Lefschetz theory for algebraic varieties, we will implicitly assume that this transition has being carried out.
\end{remark}

Up to homotopy equivalence, $E$ is obtained from $F$ by attaching $n$-cells, one for each critical point of $\pi$. In particular, the abelian group 
\begin{equation}
H_\pi = H_n(E,F)
\end{equation}
is free, and its rank equals the number of critical points, which we will denote by $m$. $H_\pi$ carries a modified version of the intersection pairing, which we call the variation pairing, with notation
\begin{equation} \label{eq:variation-pairing}
(h_0,h_1) \longmapsto h_0 \cdot_\pi h_1 \in \bZ.
\end{equation}
To construct that, one moves $\ast$ in positive direction along the boundary by a small amount, to a new point $\tilde{\ast}$. One can realize this by an isotopy of the disc, and then lift that to an isotopy of $E$, by parallel transport. The endpoint of that isotopy is a diffeomorphism which maps $F$ to the fibre $\tilde{F}$ over $\tilde{\ast}$. Denote by $\tilde{h}$ the image of a homology class $h$ under the resulting isomorphism $H_*(E,F) \iso H_*(E,\tilde{F})$. One defines
\begin{equation}
h_0 \cdot_\pi h_1 = \tilde{h}_0 \cdot h_1,
\end{equation}
where the right hand side is the standard intersection pairing between $H_n(E,\tilde{F})$ and $H_n(E,F)$ (which makes sense because $F \cap \tilde{F} = \emptyset$). The variation pairing has the following properties:
\begin{itemize} \itemsep0.5em
\item On classes in the image of $H_n(E) \rightarrow H_\pi$, it equals the standard intersection pairing. 

\item It is not (graded) symmetric, but instead satisfies
\begin{equation} \label{eq:symmetrized}
h_0 \cdot_\pi h_1 - (-1)^n \, h_1 \cdot_\pi h_0 = \partial h_0 \cdot \partial h_1,
\end{equation}
where $\partial: H_\pi \rightarrow H_{n-1}(F)$ is the boundary map, and the right hand side is the intersection pairing in $F$.
\end{itemize}

\begin{convention} \label{th:convention-dot}
Our definition of the intersection number of middle-dimensional cycles on a $2n$-manifold differs from the usual one by a sign $(-1)^{n(n+1)/2}$. As a consequence, if the ambient manifold has an almost complex structure, and $L$ is an oriented closed totally real submanifold, its selfintersection number equals the Euler characteristic:
\begin{equation} \label{eq:chi}
L \cdot L = \chi(L).
\end{equation}
One can view this as a change of convention for the orientation of an almost complex manifold. If $(\xi_1,\dots,\xi_n)$ is an oriented basis of the tangent space of $L$, the usual orientation of the ambient almost complex manifold is given by $(\xi_1,J\xi_1,\dots,\xi_n,J\xi_n)$. Changing it by $(-1)^{n(n+1)/2}$ yields $(J\xi_1,\dots,J\xi_n,\xi_1,\dots,\xi_n)$, which agrees with the standard orientation of the total space of $TL$ in which the zero-section has self-intersection \eqref{eq:chi}.
\end{convention}

To make things more concrete, choose a basis (sometimes also called a distinguished set) of vanishing paths $(\gamma^1,\dots,\gamma^m)$; see \cite[Vol.\ 2, p.\ 14]{arnold-gusein-zade-varchenko} or \cite[Section 16]{seidel04}. To each $\gamma^k$ one can associate an embedded $n$-disc $\Delta^k \subset E$ (the Lefschetz thimble), with boundary $V^k = \partial \Delta^k \subset F$ (the vanishing cycle). These cycles satisfy (in accordance with \eqref{eq:symmetrized} as well as a suitably modified version of \eqref{eq:chi}, which we won't discuss in detail)
\begin{align}
& \Delta^k \cdot_\pi \Delta^k = \chi(\Delta^k) = 1, \\
& V^k \cdot V^k = \chi(V^k) = 1-(-1)^n. \label{eq:totally-real-sphere}
\end{align}
After choosing orientations, the $\Delta^k$ form a basis of $H_\pi$, which is such that the variation pairing is upper-triangular. More precisely, one has:
\begin{equation} \label{eq:v-matrix}
\Delta^i \cdot_\pi \Delta^j = \begin{cases} V^i \cdot V^j & i<j, \\ 1 &  i = j, \\ 0 & i>j.
\end{cases}
\end{equation}
To formulate this more concretely, let's introduce two matrices $A$, $B$ of size $m \times m$, with entries
\begin{align}
& A_{ij} = \Delta^i \cdot_\pi \Delta^j, \label{eq:a-matrix} \\
& B_{ij} = V^i \cdot V^j. \label{eq:b-matrix}
\end{align}
$B$ determines $A$ by \eqref{eq:v-matrix}; conversely, by \eqref{eq:totally-real-sphere} and the symmetry of the intersection pairing on $F$,
\begin{equation} \label{eq:ab-relation}
B = A - (-1)^n A^*,
\end{equation}
where $A^*$ is the transpose. If $L \subset E$ is an oriented closed $n$-dimensional submanifold, then its class $[L] \in H_\pi$ corresponds to a lattice element $l \in \bZ^m$ satisfying
\begin{equation}
B \, l = 0. \label{eq:h-condition-2}
\end{equation}
This just expresses the fact that $[L]$ must lie in the nullspace of \eqref{eq:symmetrized}, since $\partial [L] = 0$. If $L_0,L_1$ are two such submanifolds, then
\begin{equation} \label{eq:intersection-number}
l_0^* \, A \, l_1 = L_0 \cdot L_1.
\end{equation}
In view of \eqref{eq:v-matrix}, this in principle allows one to compute intersection pairings on middle-dimensional homology by dimensional induction (from the fibre to the total space of the Lefschetz fibration). 

Certain other invariants, derived from $A$, are also geometrically meaningful. Monodromy gives an automorphism $N$ of $H_\pi$, which is uniquely characterized by the fact that
\begin{equation} \label{eq:characterize-m}
h_0 \cdot_\pi N h_1 = (-1)^n h_1 \cdot_{\pi} h_0.
\end{equation}
In a basis of Lefschetz thimbles, this reduces to the formula \cite[vol.\ 2, Theorem 2.6]{arnold-gusein-zade-varchenko}
\begin{equation} \label{eq:monodromy}
N = (-1)^n\, A^{-1} A^* \in \mathit{GL}(m,\bZ).
\end{equation}
One can also consider a deformed version of \eqref{eq:ab-relation}, namely
\begin{equation} \label{eq:givental}
A - q(-1)^n A^* \in \mathit{Mat}(m \times m, \bZ[q,q^{-1}]),
\end{equation}
which defines a bilinear pairing on $H_\pi$ with values in $\bZ[q,q^{-1}]$. The geometric meaning of this pairing, in terms of homology with twisted coefficients, is described in \cite[Section 3]{givental88}. The determinant of \eqref{eq:givental} recovers the characteristic polynomial of $N$.

\subsection{Examples}
We work out \eqref{eq:a-matrix} and the implications of \eqref{eq:intersection-number} explicitly in three instances, of which the first one is very familiar (and included only because it reappears as an intermediate step in the other two). The results of these computations will be stated without detailed proofs; this is unproblematic since they are only intended as background for the more refined computations later on.

\begin{example} \label{th:an-milnor}
For any $r \geq 1$ and $n \geq 2$, consider the hypersurface $Y \subset \bC^n$ given (in coordinates $y,z_1,\dots,z_{n-1}$) by
\begin{equation} \label{eq:am}
Y = Y_r = \{p(y) = z_1^2 + \cdots + z_{n-1}^2\},
\end{equation}
where $p$ is a degree $r+1$ polynomial with only simple zeros. $Y$ is the Milnor fibre of the $(A_r)$ type hypersurface singularity in dimension $n-1$. The projection $y: Y \rightarrow \bC$ has only nondegenerate critical points (and will become a Lefschetz fibration, in the sense defined above, after passing to suitable compact subsets). Its critical values are precisely the zeros of $p$. Any embedded path in $\bC$ whose endpoints lie in $p^{-1}(0)$, and which otherwise avoids that set, defines an embedded totally real sphere $\Sigma \subset Y$ (see e.g.\ \cite{khovanov-seidel98}); one can think of $\Sigma$ as the union of two Lefschetz thimbles, associated to two halves of the path, joined along their common boundary.
\begin{figure}
\begin{centering}
\begin{picture}(0,0)%
\includegraphics{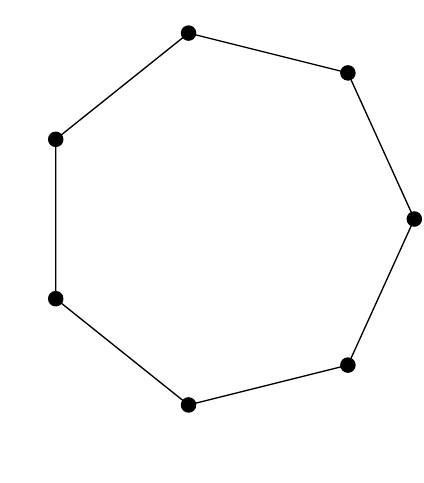}%
\end{picture}%
\setlength{\unitlength}{3355sp}%
\begingroup\makeatletter\ifx\SetFigFont\undefined%
\gdef\SetFigFont#1#2#3#4#5{%
  \reset@font\fontsize{#1}{#2pt}%
  \fontfamily{#3}\fontseries{#4}\fontshape{#5}%
  \selectfont}%
\fi\endgroup%
\begin{picture}(2388,2710)(1036,-2075)
\put(1426,239){\makebox(0,0)[lb]{\smash{{\SetFigFont{10}{12.0}{\rmdefault}{\mddefault}{\updefault}{\color[rgb]{0,0,0}$\Sigma^6$}%
}}}}
\put(1051,-661){\makebox(0,0)[lb]{\smash{{\SetFigFont{10}{12.0}{\rmdefault}{\mddefault}{\updefault}{\color[rgb]{0,0,0}$\Sigma^5$}%
}}}}
\put(2551,-1711){\makebox(0,0)[lb]{\smash{{\SetFigFont{10}{12.0}{\rmdefault}{\mddefault}{\updefault}{\color[rgb]{0,0,0}$\Sigma^3$}%
}}}}
\put(3301,-136){\makebox(0,0)[lb]{\smash{{\SetFigFont{10}{12.0}{\rmdefault}{\mddefault}{\updefault}{\color[rgb]{0,0,0}$\Sigma^1$}%
}}}}
\put(3301,-1111){\makebox(0,0)[lb]{\smash{{\SetFigFont{10}{12.0}{\rmdefault}{\mddefault}{\updefault}{\color[rgb]{0,0,0}$\Sigma^2$}%
}}}}
\put(2476,464){\makebox(0,0)[lb]{\smash{{\SetFigFont{10}{12.0}{\rmdefault}{\mddefault}{\updefault}{\color[rgb]{0,0,0}$\Sigma^7$}%
}}}}
\put(1426,-1486){\makebox(0,0)[lb]{\smash{{\SetFigFont{10}{12.0}{\rmdefault}{\mddefault}{\updefault}{\color[rgb]{0,0,0}$\Sigma^4$}%
}}}}
\end{picture}%
\caption{\label{fig:cyclic}}
\end{centering}
\end{figure}%

Suppose for concreteness that $p(y) = y^{r+1} - 1$, and consider straight paths from one root of unity to the next (in clockwise order), as in Figure \ref{fig:cyclic}. This gives spheres $\Sigma^1,\dots,\Sigma^{r+1} \subset Y$. For specific choices of orientations, and assuming $r>1$, these satisfy
\begin{equation} \label{eq:s-matrix}
\Sigma^i \cdot \Sigma^j = \begin{cases} 1-(-1)^n & i = j, \\
-1 & j=i-1, \\
(-1)^n & j = i+1, \\
0 & \text{otherwise.}
\end{cases}
\end{equation}
(There is a relation $[\Sigma^1] + \cdots + [\Sigma^{r+1}] = 0$ in homology, but we find it convenient to use all the $\Sigma^i$, for greater symmetry; one can also include the trivial case $r = 1$, in which the only relevant information is the selfintersection number of $[\Sigma^1] = -[\Sigma^2]$).

Let's see how Picard-Lefschetz theory recovers \eqref{eq:s-matrix}. The smooth fibre of the $y$-projection is an affine quadric, hence diffeomorphic to $T^*S^{n-2}$. All vanishing cycles of the projection are equal to the zero-section $S^{n-2} \subset T^*S^{n-2}$. If we choose a basis of such cycles, their intersection numbers form a matrix \eqref{eq:b-matrix} with $B_{ij} = \chi(S^{n-2}) = 1+(-1)^n$. The nullspace of $B$ is spanned by $s^1 = (-1,1,0,\dots,0)$ and its cyclic permutations $s^2,\dots,s^{r+1}$, again with a relation $s^1 + \cdots + s^{r+1} = 0$. Applying \eqref{eq:intersection-number} reproduces \eqref{eq:s-matrix}:
\begin{equation}
(s^i)^* \,A \, s^j = \Sigma^i \cdot \Sigma^j. 
\end{equation}
\end{example}

\begin{example} \label{th:ab-example}
Fix coprime integers $0<a<b$, and another integer $n \geq 2$. Consider
\begin{equation} \label{eq:fieseler}
X = X_{a,b} = \{ x^a y^b - 1 = z_1^2 + \cdots + z_{n-1}^2 \},
\end{equation}
where $(x,y,z_1,\dots,z_{n-1})$ are the coordinates in $\bC^{n+1}$. As a Lefschetz fibration, take the map
\begin{equation} \label{eq:pi-map}
\begin{aligned}
& \pi: X \longrightarrow \bC, \\
& \pi(x,y,z_1,\dots,z_{n-1}) = ax+by.
\end{aligned}
\end{equation}
The fibres of $\pi$ are the hypersurfaces encountered in Example \ref{th:an-milnor}, for $r = a+b-1$. Concretely, take $Y= \pi^{-1}(0)$, which is \eqref{eq:am} for $p(y) = y^{a+b}(-b/a)^a - 1$. An easy branch locus computation \cite[Section 3.3]{auroux-katzarkov-orlov04} shows that \eqref{eq:pi-map} has a basis of vanishing cycles $V^i \subset Y$, which are themselves fibered over paths in the $y$-plane as in Figure \ref{fig:ab-cycles}. In homology, this means that 
\begin{equation} \label{eq:s-relation}
[V^i] = [\Sigma^i] + [\Sigma^{i+1}] + \cdots + [\Sigma^{i+a-1}]. 
\end{equation}
From that and \eqref{eq:s-matrix}, one computes the matrix \eqref{eq:b-matrix}. It turns out to be a cyclic band matrix: if rows and columns are indexed by $\bZ/(a+b)$, the entries $B_{ij}$ depend only on $i-j$. Concretely,
\begin{equation} \label{eq:special-b-matrix}
B_{ij} = \begin{cases} 
(-1)^n & i-j = -a\; \mathrm{mod}\; (a+b), \\
1 + (-1)^n & i-j = -a+1,\dots,-1\; \mathrm{mod}\; (a+b), \\
1 - (-1)^n & i = j, \\
-1 - (-1)^n & i-j = 1,\dots,a-1\; \mathrm{mod}\; (a+b), \\
-1 & i-j = a\; \mathrm{mod}\; (a+b), \\
0 & \text{otherwise.}
\end{cases}
\end{equation}
For instance, if $(a,b) = (2,3)$ and $n$ is even, one has
\begin{equation}
B = \begin{pmatrix} 
0 & 2 & 1 & -1 & -2 \\
-2 & 0 & 2 & 1 & -1 \\
-1 & -2 & 0 & 2 & 1 \\
1 & -1 & -2 & 0 & 2 \\
2 & 1 & -1 & -2 & 0
\end{pmatrix},
\qquad
A = 
\begin{pmatrix}
1 & 2 & 1 & -1 & -2 \\
0 & 1 & 2 & 1 & -1 \\
0 & 0 & 1 & 2 & 1 \\
0 & 0 & 0 & 1 & 2 \\
0 & 0 & 0 & 0 & 1
\end{pmatrix}.
\end{equation}
\begin{figure}
\begin{centering}
\begin{picture}(0,0)%
\includegraphics{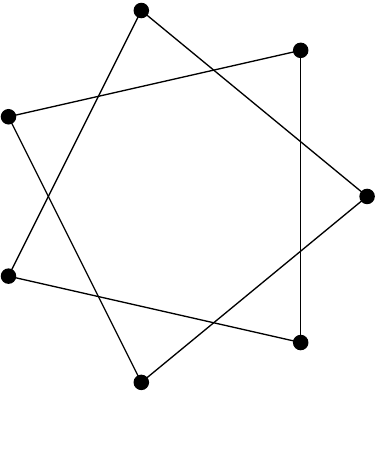}%
\end{picture}%
\setlength{\unitlength}{3355sp}%
\begingroup\makeatletter\ifx\SetFigFont\undefined%
\gdef\SetFigFont#1#2#3#4#5{%
  \reset@font\fontsize{#1}{#2pt}%
  \fontfamily{#3}\fontseries{#4}\fontshape{#5}%
  \selectfont}%
\fi\endgroup%
\begin{picture}(2121,2595)(1303,-2084)
\put(2951,-661){\makebox(0,0)[rb]{\smash{{\SetFigFont{10}{12.0}{\rmdefault}{\mddefault}{\updefault}{\color[rgb]{0,0,0}$V^1$}%
}}}}
\put(1651,-2011){\makebox(0,0)[lb]{\smash{{\SetFigFont{10}{12.0}{\rmdefault}{\mddefault}{\updefault}{\color[rgb]{0,0,0}$(a,b) = (2,5)$}%
}}}}
\put(2326,-186){\makebox(0,0)[rb]{\smash{{\SetFigFont{10}{12.0}{\rmdefault}{\mddefault}{\updefault}{\color[rgb]{0,0,0}$V^6$}%
}}}}
\put(2701,-261){\makebox(0,0)[rb]{\smash{{\SetFigFont{10}{12.0}{\rmdefault}{\mddefault}{\updefault}{\color[rgb]{0,0,0}$V^7$}%
}}}}
\put(2801,-1036){\makebox(0,0)[rb]{\smash{{\SetFigFont{10}{12.0}{\rmdefault}{\mddefault}{\updefault}{\color[rgb]{0,0,0}$V^2$}%
}}}}
\put(2301,-1131){\makebox(0,0)[rb]{\smash{{\SetFigFont{10}{12.0}{\rmdefault}{\mddefault}{\updefault}{\color[rgb]{0,0,0}$V^3$}%
}}}}
\put(2001,-511){\makebox(0,0)[rb]{\smash{{\SetFigFont{10}{12.0}{\rmdefault}{\mddefault}{\updefault}{\color[rgb]{0,0,0}$V^5$}%
}}}}
\put(2001,-861){\makebox(0,0)[rb]{\smash{{\SetFigFont{10}{12.0}{\rmdefault}{\mddefault}{\updefault}{\color[rgb]{0,0,0}$V^4$}%
}}}}
\end{picture}%
\caption{\label{fig:ab-cycles}}
\end{centering}
\end{figure}%

Suppose first that $n$ is odd. Then, the nullspace of \eqref{eq:special-b-matrix} is generated by $h = (1,\dots,1)$. This implies that the image of the map $H_n(X) \rightarrow H_\pi$ is of rank at most one (one can check that the image is in fact exactly $\bZ h$). One has
\begin{equation} \label{eq:null-hh}
h^* \, A \, h = 0.
\end{equation}
In the simplest case $a = 1$ and $n = 3$, one can show that all $X_{1,b}$ are diffeomorphic to the cotangent bundle $T^*S^3$, in a way which is compatible with the homotopy class of the almost complex structure (see \cite{maydanskiy-seidel09} for more information about the case $a = 1$ for general odd $n$).

The situation for even $n$ is a little more interesting. As before, $h$ lies in the nullspace of $B$. If $r$ is odd, it generates that nullspace. If $r$ is even, the nullspace has rank $2$, with the other generator being $\bar{h} = (1,0,1,0,\dots)$. However, going back to \eqref{eq:s-relation}, one sees that the kernel of the boundary map $H_\pi \rightarrow H_{n-1}(Y)$ is $\bZ h$ for any $r$. Hence, the intersection pairing on $H_n(X)$ is fully described by
\begin{equation} \label{eq:2ab}
h^* \, A \, h = 2ab.
\end{equation}
In the simplest instances, namely for $n = 2$ and $a = 1$, $X$ is a Danielewski surface \cite{dubouloz05}, diffeomorphic to the total space of the complex line bundle of degree $-2b$ over $S^2$. The case $n = 2$ and $a>1$ is more complicated: one gets a four-manifold with a circle action, and its boundary (at infinity) is a Seifert fibered space whose base has orbifold Euler characteristic $2/a+2/b-2 < 0$.

The vanishing of \eqref{eq:null-hh} is obvious from a topological viewpoint, since intersection numbers are skew-symmetric in odd dimensions. For the even-dimensional situation, the fact that \eqref{eq:2ab} is even again has a general topological explanation: since $X$ is a hypersurface in affine space, $TX$ is stably trivial, and such manifolds necessarily have even intersection form (this follows from the Wu formula, which expresses the mod $2$ selfintersection in $H_n(X)$ as the pairing with a characteristic class of $TX$ \cite{wu55,kervaire57}).
%
\end{example}

\begin{example} \label{th:mirrorp2-example}
As a variation of the previous situation (specialized to $a = 1$, $b = 2$), consider
\begin{equation} \label{eq:tilde-x}
X = \{ (x y^2 - 1)x = z_1^2 + \cdots + z_{n-1}^2\},
\end{equation}
which has
\begin{equation}
H_*(X) \iso \begin{cases} \bZ & \ast = 0,\, n-1,\, n, \\ 0 & \text{otherwise.}
\end{cases}
\end{equation}
To apply Picard-Lefschetz theory, we again use $\pi(x,y,z_1,\dots,z_{n-1}) = x+2y$. The fibre $Y = \pi^{-1}(0)$ is \eqref{eq:am} for the poynomial $p(y) = y^4/4 + y/2$, whose zeros are $\{0\} \cup \sqrt[3]{-2}$.
%
\begin{figure}
\begin{centering}
\begin{picture}(0,0)%
\includegraphics{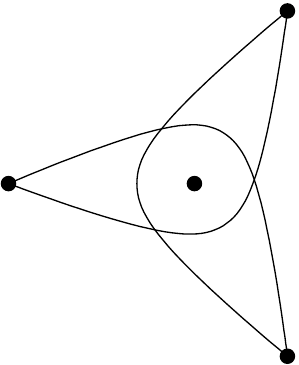}%
\end{picture}%
\setlength{\unitlength}{3355sp}%
\begingroup\makeatletter\ifx\SetFigFont\undefined%
\gdef\SetFigFont#1#2#3#4#5{%
  \reset@font\fontsize{#1}{#2pt}%
  \fontfamily{#3}\fontseries{#4}\fontshape{#5}%
  \selectfont}%
\fi\endgroup%
\begin{picture}(1671,2044)(1303,-1608)
\put(1501,-961){\makebox(0,0)[lb]{\smash{{\SetFigFont{10}{12.0}{\rmdefault}{\mddefault}{\updefault}{\color[rgb]{0,0,0}$V^3$}%
}}}}
\put(2301, 89){\makebox(0,0)[lb]{\smash{{\SetFigFont{10}{12.0}{\rmdefault}{\mddefault}{\updefault}{\color[rgb]{0,0,0}$V^1$}%
}}}}
\put(1701,-286){\makebox(0,0)[lb]{\smash{{\SetFigFont{10}{12.0}{\rmdefault}{\mddefault}{\updefault}{\color[rgb]{0,0,0}$V^2$}%
}}}}
\end{picture}%
\caption{\label{fig:mirrorp2}}
\end{centering}
\end{figure}%
A basis of vanishing cycles in $Y$ consists of the three spheres fibered over the paths drawn in Figure \ref{fig:mirrorp2}. 
The analogue of \eqref{eq:special-b-matrix} is
\begin{equation} \label{eq:mirrorp2-b}
B = \begin{pmatrix}
1-(-1)^n & -1-2(-1)^n & 2+(-1)^n \\ 
2+(-1)^n & 1-(-1)^n & -2-(-1)^n  \\ -1-2(-1)^n & 1+2(-1)^n & 1-(-1)^n
\end{pmatrix}.
\end{equation}
The nullspace of $B$ is generated by $h = ((-1)^n,1,1)$, and
\begin{equation} \label{eq:tilde-h}
h^* \, A \, h = 0,
\end{equation}
which implies that the intersection pairing on $H_n(X)$ is zero for any $n$. 

The case $n = 2$ is a well-known one in mirror symmetry. In that dimension, the subset of $X = \{x^2y^2 - x = z^2\}$ where $x \neq 0$ can be identified with $(\bC^*)^2$ by setting
\begin{equation}
u_1 = \frac{1}{xy-z}, \quad u_2 = \frac{1}{xy+z},
\end{equation}
and then
\begin{equation}
\pi(u_1,u_2) = u_1+u_2+\frac{1}{u_1u_2}
\end{equation}
is the Landau-Ginzburg superpotential mirror to $\bC P^2$. The fibres of this superpotential are three-punctured (smooth or nodal) curves, and $X$ is obtained from $(\bC^*)^2$ by filling in one of the punctures in each fibre. This explains why Figure \ref{fig:mirrorp2} reproduces \cite[Figure 5]{auroux-katzarkov-orlov04}. It also shows that $X$ contains a totally real torus (namely, $S^1 \times S^1 \subset (\bC^*)^2 \subset X$) which is nonzero in homology. This gives an alternative explanation for the vanishing of the intersection pairing, via \eqref{eq:chi}.

There is a similar topological viewpoint for general $n$. The subset of \eqref{eq:tilde-x} where $x \neq 0$ can be identified with $\bC^n \setminus \{z_1^2 + \cdots + z_n^2 = 0\}$, by setting 
\begin{equation} \label{eq:zn}
z_n = \sqrt{-1}\, xy. 
\end{equation}
This subset retracts to a totally real submanifold $L$, namely the image of the embedding
\begin{equation} \label{eq:s1sn}
\begin{aligned}
& S^1 \times_{\bZ/2} S^{n-1} \longrightarrow \bC^n \setminus \{z_1^2 + \cdots + z_n^2 = 0\}, \\
& (\zeta,v) \longmapsto \zeta v.
\end{aligned}
\end{equation}
Here, $\zeta \in S^1 \subset \bC$, $v \in S^{n-1} \subset \bR^n$, and $\bZ/2$ acts on both by the antipodal map. Up to homotopy equivalence, $X$ can be obtained from $S^1 \times_{\bZ/2} S^{n-1}$ by contracting the subset $S^1 \times_{\bZ/2} S^{n-2}$, so that $\tilde{H}_*(X;\bQ) \iso H^*(S^1;\bQ) \otimes H^*(S^{n-1},S^{n-2};\bQ)^{\bZ/2}$. If $n$ is even, \eqref{eq:s1sn} is orientable and represents a nontrivial homology class, which again allows one to use \eqref{eq:chi} to determine the intersection pairing.
\end{example}

\subsection{$\bC^*$-actions on Fukaya categories\label{subsec:dilating}}
At this point, we add symplectic structures to the discussion. This allows one to overcome the limitations of a purely topological theory (such as that encountered in Example \ref{th:ab-example} for odd $n$, where the intersection pairing is the same for all $a,b$). For technical reasons, we impose additional assumptions as follows:

\begin{setup} \label{th:setup-symplectic}
Throughout, we will work with exact symplectic manifolds with corners 
\begin{equation}
(M^{2n},\omega_M = d\theta_M,I_M)
\end{equation}
 in the sense of \cite[Section 7]{seidel04}. Additionally, any such manifold is assumed to come with a trivialization of the canonical bundle $K_M = \Lambda^n_{\bC}(TM^*)$ (only the homotopy class of that trivialization matters).

An exact Lagrangian brane in $M$ is a Lagrangian submanifold $L \subset $M which is closed, connected, exact (meaning that $\theta_M|L$ is an exact one-form), and comes with a choice of grading (which in particular determines an orientation) as well as a {\em Spin} structure. 
\end{setup}

The class of exact symplectic manifolds with corners includes Liouville domains, which are the special case where $M$ has smooth boundary, and near that boundary, $\theta_M = -dh \circ I_M$ for some function satisfying $h|\partial M = 0$, $dh>0$ in outwards direction (conversely, given any exact symplectic manifold with corners, one can shrink it slightly and modify the almost complex structure, so as to make it into a Liouville domain; hence, allowing manifolds with corners does not add substantially to the level of generality, but it is convenient when discussing Lefschetz fibrations).

\begin{example} \label{th:hypersurface}
Let $X = \{p(x_1,\dots,x_{n+1}) = 0\} \subset \bC^{n+1}$ be a smooth affine hypersurface. Choose a real polynomial $h = h(x_1,\bar{x}_1,\dots,x_{n+1},\bar{x}_{n+1}): \bC^{n+1} \rightarrow \bR$ which is proper, bounded below, and strictly plurisubharmonic. Take $R$ larger than the largest critical value of $h|X$. Then, $M = \{x \; : \; h(x) \leq R\}$ equipped with $\theta_M = -d^ch$, $\omega_M = d\theta_M$, and the standard complex structure $I_M = i$, is a Liouville domain. Moreover, up to deformation inside the class of such domains, it is independent of the choice of $h$ and $R$ \cite[Proposition 3.3]{caubel-tibar03}. One equips it with the trivialization of $K_M$ given by the complex volume form $\mathrm{res}(dx_1 \wedge \cdots \wedge dx_{n+1}/p)$.
\end{example}

To an $M$ as in Setup \ref{th:setup-symplectic}, one can associate its Fukaya $A_\infty$-category $\scrF(M)$ (this is the $\bZ$-graded version, with coefficients in $\bC$). The objects of $\scrF(M)$ are exact Lagrangian branes, and the cohomology level morphism spaces are Lagrangian Floer cohomology groups
\begin{equation} \label{eq:floer-cohomology}
H^*(\hom_{\scrF(M)}(L_0,L_1)) = \mathit{HF}^*(L_0,L_1) ,
\end{equation}
whose Euler characteristic equals the intersection number:
\begin{equation} \label{eq:floer-euler}
\chi(\mathit{HF}^*(L_0,L_1)) = L_0 \cdot L_1.
\end{equation}
For $L_0 = L_1 = L$, there is a canonical algebra isomorphism which refines the equality \eqref{eq:chi}:
\begin{equation} \label{eq:endo-hf}
H^*(\hom_{\scrF(M)}(L,L)) \iso H^*(L;\bC).
\end{equation}

The next topic is the main black box of our current discussion: we will suppose from now on that {\em $M$ has a dilating $\bC^*$-action}. What we actually mean by this is that the Fukaya category $\scrF(M)$ admits such an action, in a sense which will be made precise in Definitions \ref{th:fuk-has-an-action} and \ref{th:dilating-action}; and that we work with a fixed such action. However, for the moment the more compact, if slightly more vague, terminology may be permitted. In this situation, each exact Lagrangian brane $L \subset M$ comes with a distinguished deformation class
\begin{equation} \label{eq:l-def-class}
\mathit{Def}_L^0 \in \mathit{HF}^1(L,L) \iso H^1(L;\bC).
\end{equation}
If that class vanishes (in particular, if $H^1(L;\bC) = 0$), $L$ can be made equivariant, in a sense which will be specified later on; see Definition \ref{th:equivariant-l}. Making $L$ equivariant involves a choice, and the possible choices form an affine space over $\bZ$. We denote the change of equivariant structure by
\begin{equation} \label{eq:equi-shift}
L \longmapsto L \langle k \rangle, \;\; k \in \bZ.
\end{equation}
Note that this is different from the (downwards) shift in the grading, denoted by
\begin{equation}
L \longmapsto L[k].
\end{equation}

\begin{example} \label{th:cylinder}
Take the cylinder $M = [-1,1] \times S^1$ (in coordinates $z = p+iq$), with $\omega_M = dp \wedge dq$, $\theta_M = p \, \mathit{dq}$, and the trivialization of its canonical bundle given by $dz = dp+i \,dq$. Up to quasi-isomorphism and shifts in the grading, $\scrF(M)$ contains only two objects, given by the exact circle in $M$ with either of its two {\em Spin} structures. One can equip $M$ with a dilating $\bC^*$-action such that \eqref{eq:l-def-class} vanishes for both of these objects.
\end{example}

%

Take two exact Lagrangian branes $(L_0,L_1)$ which have been made equivariant. Their Floer cohomology \eqref{eq:floer-cohomology} then becomes a graded representation of $\bC^*$. In particular, for $L_0 = L_1 = L$ one gets a $\bC^*$-action on the ordinary cohomology \eqref{eq:endo-hf}. Here are some properties:
\begin{itemize} \itemsep0.5em
\item The $\bC^*$-action on $\mathit{HF}^*(L_0\langle k_0 \rangle, L_1\langle k_1 \rangle)$ is obtained from that on $\mathit{HF}^*(L_0,L_1)$ by tensoring with the one-dimensional representation of weight $k = k_1-k_0$. We denote this tensor product operation by $\langle k \rangle$ as well, so that the statement can be written as
\begin{equation} \label{eq:shift-equivariance}
\mathit{HF}^*(L_0\langle k_0 \rangle, L_1\langle k_1 \rangle) \iso
\mathit{HF}^*(L_0,L_1) \langle k_1 - k_0 \rangle.
\end{equation}
\item If $(L_0,L_1,L_2)$ are equivariant, the composition (triangle product)
\begin{equation}
\mathit{HF}^*(L_1,L_2) \otimes \mathit{HF}^*(L_0,L_1) \longrightarrow \mathit{HF}^*(L_0,L_2)
\end{equation}
is $\bC^*$-equivariant.
\item If $L$ is equivariant, the $\bC^*$-action on \eqref{eq:endo-hf} has weight $1$ in degree $n$.
\end{itemize}
The second property implies that for an equivariant $L$, the $\bC^*$-action on $H^*(L;\bC)$ is by algebra automorphisms, hence acts trivially on the identity (in degree $0$). The last property is what makes the action a dilating one. We want to draw one immediate consequence of those two properties. A basic feature of Floer cohomology is that the pairings
\begin{equation} \label{eq:two-sided-product}
\mathit{HF}^{n-*}(L_1,L_0) \otimes \mathit{HF}^*(L_0,L_1) \xrightarrow{\text{product}} \mathit{HF}^n(L_0,L_0) \iso H^n(L_0;\bC) \xrightarrow{\int_{L_0}} \bC
\end{equation}
are nondegenerate, and symmetric in the sense that
\begin{equation}
\textstyle \int_{L_0} x_1 x_0 = (-1)^{|x_1|\,|x_0|} \int_{L_1} x_0 x_1.
\end{equation}
It follows that as $\bC^*$-representations, we have the following twisted version of duality:
\begin{equation} \label{eq:q-symmetry}
\mathit{HF}^*(L_1,L_0) \iso \mathit{HF}^{n-*}(L_0,L_1)^\vee \otimes \mathit{HF}^n(L_k,L_k) \iso \mathit{HF}^{n-*}(L_0,L_1)^\vee \langle 1 \rangle.
\end{equation}

In analogy with \cite{seidel-solomon10}, one defines the $q$-intersection number to be 
\begin{equation} \label{eq:q-intersection}
L_0 \cdot_q L_1 = \mathrm{Str}\big(q: \mathit{HF}^*(L_0,L_1) \longrightarrow \mathit{HF}^*(L_0,L_1)\big) \in \bZ[q,q^{-1}].
\end{equation}
Here, the endomorphism of $\mathit{HF}^*(L_0,L_1)$ is the action of $q \in \bC^*$, and $\mathrm{Str}$ is the supertrace (Lefschetz trace). Equivalently, if $\mathit{HF}^*(L_0,L_1)^k$ is the subspace on which $\bC^*$ acts with weight $k$, the coefficients of \eqref{eq:q-intersection} are the Euler characteristics
\begin{equation} \label{eq:q-intersection-2}
L_0 \cdot_q L_1 = \sum_k\chi(\mathit{HF}^*(L_0,L_1)^k)\, q^k.
\end{equation}
These $q$-intersection numbers satisfy the following:
\begin{itemize} \itemsep0.5em 

\item Specializing to $q = 1$ recovers the intersection number $L_0 \cdot L_1$.

\item If $L_0 \cdot_q L_1 = \sum_k a_k q^k$, then $\sum_k |a_k|$ is a lower bound for the total dimension of $\mathit{HF}^*(L_0,L_1)$. More precisely, $\sum_k \mathrm{max}(0,a_k)$ is a lower bound for the total dimension of $\mathit{HF}^{\mathit{even}}(L_0,L_1)$, and $-\sum_k \mathrm{min}(0,a_k)$ correspondingly for $\mathit{HF}^{\mathit{odd}}(L_0,L_1)$.

\item For any equivariant $L$, 
\begin{equation} \label{eq:self-bullet}
L \cdot_q L = 1 + (-1)^n q + \sum_{j \in J} (-1)^{|x_j|} q^{w(x_j)},
\end{equation}
where the sum is over a basis $\{x_j\}_{j \in J}$ of $\bigoplus_{0 < \ast < n} H^*(L;\bC)$. The $|x_j|$ are the degrees of the basis elements, and the $w(x_j)$ are a priori unknown integers. In the simplest case where $L$ is a rational homology sphere, the sum is empty and one gets
\begin{equation} \label{eq:self-bullet-2}
L \cdot_q L = 1 + (-1)^n q.
\end{equation}

\item Changing the equivariant structure affects the $q$-intersection number as follows:
\begin{equation} \label{eq:q-rescale}
L_0 \langle k_0 \rangle \cdot_q L_1 \langle k_1 \rangle = q^{k_1-k_0} (L_0 \cdot_q L_1).
\end{equation}

\item Shifting the grading affects the $q$-intersection number as follows:
\begin{equation} \label{eq:q-shift}
L_0[k_0] \cdot_q L_1[k_1] = (-1)^{k_1-k_0} (L_0 \cdot_q L_1).
\end{equation}
\item For any equivariant $(L_0,L_1)$,
\begin{equation} \label{eq:dual-bullet}
L_1 \cdot_q L_0 = (-1)^n\, q\, (L_0 \cdot_q L_1)^*,
\end{equation}
where the superscript $*$ stands for the substitution $q \mapsto q^{-1}$. 
\item Let $L_0,L_1,L_2$ be equivariant. Assume that $L_0$ is a sphere, and let $\tau_{L_0}$ be the associated Dehn twist. Then there is a way of making $\tau_{L_0}(L_1)$ equivariant, such that
\begin{equation} \label{eq:q-picard-lefschetz}
L_2 \cdot_q \tau_{L_0}(L_1) = L_2 \cdot_q L_1 - (L_2 \cdot_q L_0)(L_0 \cdot_q L_1).
\end{equation}
Similarly, for an inverse Dehn twist
\begin{equation} \label{eq:q-picard-lefschetz-inverse}
L_2 \cdot_q \tau_{L_0}^{-1}(L_1) = L_2 \cdot_q L_1 - (-1)^n q^{-1} (L_2 \cdot_q L_0)(L_0 \cdot_q L_1).
\end{equation}
\end{itemize}
Except for \eqref{eq:q-picard-lefschetz} and its counterpart \eqref{eq:q-picard-lefschetz-inverse} (to which we will return later), all these properties are consequences of the previously listed ones for the $\bC^*$-action. For instance, \eqref{eq:dual-bullet} follows from \eqref{eq:q-symmetry} by taking Lefschetz traces. To make \eqref{eq:q-shift} precise, we have to specify how the shifted brane $L[k]$ is made equivariant: this is done in such a way that the $\bC^*$-action on $\mathit{HF}^{-k}(L,L[k]) \iso \mathit{HF}^0(L,L) \iso H^0(L;\bC)$ is trivial. Given that, the compatibility of the $\bC^*$-action with the product implies that the $\bC^*$-action on $\mathit{HF}^*(L_0[k_0],L_1[k_1])$ is preserved by the isomorphism between that group and $\mathit{HF}^{*+k_1-k_0}(L_0,L_1)$; the desired formula for $q$-intersection numbers follows from that.

\begin{remark}
There is a more precise version of \eqref{eq:self-bullet}, obtained by combining the contributions of Poincar{\'e} dual generators. This has the form
\begin{equation}
L \cdot_q L = (1 + (-1)^n q) + \sum_{j \in J'} (-1)^{|x_j|} (q^{w(x_j)} + (-1)^n q^{1-w(x_j)}).
\end{equation}
If $n$ is odd, the sum is over a basis $\{x_j\}_{j \in J'}$ of the subspace $\bigoplus_{0 < \ast < n/2} H^*(L;\bC)$. If $n$ is even, one has to include a half-dimensional subspace of $H^{n/2}(L;\bC)$ as well (it follows from the existence of the $\bC^*$-action that $H^{n/2}(L;\bC)$ is even-dimensional for any equivariant $L$, which is nontrivial if $n$ is a multiple of $4$).
\end{remark}

\subsection{$\bC^*$-actions in Picard-Lefschetz theory}
In the symplectic context, Lefschetz fibrations become a particularly powerful tool.

\begin{setup} \label{th:symplectic-lefschetz}
We will work with Lefschetz fibrations which topologically look like those in Setup \ref{th:topological-lefschetz}, but now assuming that they carry the structure of exact symplectic Lefschetz fibrations in the sense of \cite[Section 17]{seidel04}. In particular, both the total space $E$ and fibre $F$ are exact symplectic manifolds with corners. We will also require that $E$ comes with a trivialization of $K_E$, which then induces the corresponding structure for $F$ (in a way which is unique up to homotopy).

Lefschetz thimbles and vanishing cycles are exact Lagrangian submanifolds. Because it bounds a ball, each vanishing cycle can be equipped with a grading (non-uniquely) and a {\em Spin} structure (uniquely determined by having to bound one on a ball). In this way, the vanishing cycle becomes an object of the Fukaya category $\scrF(F)$.
\end{setup}

As a consequence of applying Picard-Lefschetz theory in our context, one gets the following (which is one of the two main theorems in this paper):

\begin{theorem} \label{th:gm-picard-lefschetz}
Suppose that $F$ admits a dilating $\bC^*$-action, and that there is a basis of vanishing cycles $(V^1,\dots,V^m)$ such that $\mathit{Def}^0_{V^k} = 0$ for all $k$ (the last-mentioned condition is automatically true if $2n =\mathrm{dim}(E) \geq 6$, since then $H^1(V^k;\bC) = 0$). Then $E$ also admits a dilating $\bC^*$-action.
\end{theorem}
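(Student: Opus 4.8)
The plan is to run the dimensional induction at the level of directed $A_\infty$-categories: transport the $\bC^*$-action from $\scrF(F)$ to the directed $A_\infty$-category $\scrA$ of vanishing cycles, and then carry it to an enlargement of $\scrF(E)$ via the Picard--Lefschetz description of compact branes in $E$ as twisted complexes of Lefschetz thimbles. As a first step I would make every vanishing cycle equivariant: since $F$ carries a dilating $\bC^*$-action and $\mathit{Def}^0_{V^k} = 0$ for all $k$, Definition \ref{th:equivariant-l} applies, so I fix once and for all an equivariant structure on each $V^k$ (any element of the $\bZ$-torsor of choices). Then each $\hom_{\scrF(F)}(V^i,V^j)$ with $i<j$ carries a weight grading, and all $A_\infty$-operations among the $V^k$ are $\bC^*$-equivariant. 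Consequently the directed category $\scrA$ on the objects $V^1,\dots,V^m$ --- with $\hom_\scrA(V^i,V^j)$ the above complexes for $i<j$, equal to $\bC\cdot e$ for $i=j$, and $0$ for $i>j$ --- acquires a genuine (``naive'') $\bC^*$-action: a weight grading compatible with composition and all structure maps, the strict units placed in weight zero. The forced diagonal and sub-diagonal pieces are concentrated in weight zero or are trivial, so they create no compatibility issue.

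Next I would feed this into the symplectic Picard--Lefschetz theory of \cite{seidel04} recalled in Section \ref{sec:fukaya}: the Fukaya category of the Lefschetz fibration built from the thimbles $\Delta^1,\dots,\Delta^m$ is quasi-isomorphic to $\scrA$, and every compact exact brane in $E$ is realized as a twisted complex of thimbles, so $\scrF(E)$ quasi-embeds into $\Tw(\scrA)$ (or its split-closure; equivalently into the perfect module category $\Mod(\scrA)$). Because $\scrA$ is $\bC^*$-equivariant, this enlargement inherits the $\bC^*$-action, and this is precisely the ad hoc notion of Definition \ref{th:fuk-has-an-action}. The construction of equivariant $A_\infty$-modules quoted from \cite{seidel12} is what organizes this step together with its obstruction theory --- in particular it is what makes $\mathit{Def}^0$ and ``making a brane equivariant'' meaningful for objects of $\scrF(E)$ sitting inside $\Mod(\scrA)$. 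It then remains only to check that the induced action on $\scrF(E)$ is dilating.

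To verify the dilating property, I must show that for an equivariant brane $L\subset E$ the $\bC^*$-action on $\mathit{HF}^n(L,L)\iso H^n(L;\bC)$ has weight $1$, equivalently that the twisted duality \eqref{eq:q-symmetry} holds for $\scrF(E)$ with the shift $\langle 1\rangle$. The base case is the fibre, where the corresponding statement (weight $1$ in degree $n-1$, via the nondegenerate pairing \eqref{eq:two-sided-product} for $F$) is part of the given dilating action --- in particular each $(n-1)$-sphere $V^k$ obeys the normalization \eqref{eq:self-bullet-2}. I would then propagate this through the Picard--Lefschetz construction of the action on $\scrF(E)$: the weight of the action on the duality pairing of $\scrF(E)$ is pinned down by that on the pairing of $\scrF(F)$, with a one-unit shift reflecting exactly the increase $\dim E = \dim F + 2$ in passing from fibre to total space. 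This weight-tracking is the main point of the detailed argument in Section \ref{subsec:proofs}.

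I expect the passage carried out in the second and third steps to be the main obstacle: making the transfer ``$F$ dilating, the $V^k$ equivariant'' $\Rightarrow$ ``the enlargement of $\scrF(E)$ dilating'' both precise and compatible with the $\bZ$-grading and the weight grading simultaneously. Exhibiting one equivariant structure on each $V^k$ is only a first-order (obstruction-class) matter; the real work lies in propagating it to all of $\scrA$ at once, transporting it across the Picard--Lefschetz identification into the module category, and checking that the dilating normalization survives with the correct one-unit shift. This is where the equivariant-$A_\infty$-module machinery of \cite{seidel12} and the Lefschetz-fibration input of \cite{seidel04} do the essential work, rather than formal nonsense.
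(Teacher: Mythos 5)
The first half of your proposal matches the paper: you correctly make each $V^k$ equivariant, transport a naive $\bC^*$-action onto the directed category $\scrA$ of thimbles, and use the Lefschetz-fibration embedding $\scrK:\scrF(E)\to\Tw(\scrA)$ to conclude that $\scrF(E)$ admits a $\bC^*$-action in the sense of Definition \ref{th:fuk-has-an-action}. You also correctly flag that the remaining difficulty is the dilating normalization. But your proposed mechanism for that step --- ``propagate the weight on the duality pairing from $\scrF(F)$ to $\scrF(E)$, with a one-unit shift'' --- does not produce an argument, because there is no duality pairing in $\scrA$ to propagate: $\scrA$ is a directed category on the Lefschetz thimbles, which are non-compact branes, and a directed category carries no weak Calabi--Yau structure \eqref{eq:weak-cy}. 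Nothing in the passage from $\scrF(F)$ to $\Tw(\scrA)$ to $\scrF(E)$ directly transfers the weight-$1$ normalization, and this is exactly the point at which the paper says the naive construction is insufficient.

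The device the paper actually uses to close this gap is the \emph{double branched cover} $\tilde{E}\to E$ (branched along a fibre), together with the matching cycles $\Sigma^k\subset\tilde{E}$. Working in the enlarged directed category $\tilde{\scrA}$ on $2m$ objects, Lemma \ref{th:matching-cycle} identifies $\Sigma^k$ with the mapping cone $S^k=\mathit{Cone}(\tilde{X}^k\to\tilde{X}^{k+m})$, whose connecting morphism lies in a weight-zero (hence $\bC^*$-invariant) piece, so $S^k$ is an equivariant twisted complex; the explicit computation \eqref{eq:inductive-sphere} then shows that the $\bC^*$-action on $H^n(\hom_{\Tw(\tilde\scrA)}(S^k,S^k))\iso H^{n-1}(\hom_{\EqTw(\scrC)}(C^k,C^k))$ has weight $1$, which is precisely the ``one-unit shift'' you gestured at, but realized on a concrete sphere rather than on a thimble. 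Finally, Lemma \ref{th:generators} says every brane $L\subset E^*$ pairs nontrivially with some $\Sigma^k$, and Lemma \ref{th:dilating-generators} converts that into the dilating property for all objects. Without the double-cover trick you have no spheres for which to verify the normalization, and without Lemma \ref{th:dilating-generators} you have no way to promote a check on a generating family to the general statement; both ingredients are absent from your proposal.
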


\begin{example} \label{th:simple-milnor}
Consider \eqref{eq:am}, for $n \geq 3$ (which means, excluding the case of hyperelliptic curves) and any $r$. For $n = 3$, this is the total space of a Lefschetz fibration whose fibre is a cylinder, hence carries a dilating $\bC^*$-action by Theorem \ref{th:gm-picard-lefschetz} and Example \ref{th:cylinder}. For $n > 3$ the fibre is $T^*S^{n-2}$, whose Fukaya category is completely understood, and easily seen to admit a dilating $\bC^*$-action as well; alternatively, one can argue by induction on the dimension (since cotangent bundles of spheres also appear as the special case $r = 1$). This is in fact the example considered in \cite{seidel12}.
\end{example}

\begin{example} \label{th:stabilized}
Let $p \in \bC[x_1,\dots,x_{n+1}]$ be a polynomial with an isolated critical point at the origin. Suppose that the Hessian of $p$ at that point has rank $\geq 2$. Then the Milnor fibre of the singularity admits a dilating $\bC^*$-action. This follows from Theorem \ref{th:gm-picard-lefschetz}, Example \ref{th:simple-milnor}, and an iterated Lefschetz fibration argument, which is the same as in \cite[Example 2.13]{seidel13}.
\end{example}

We now turn to more concrete implications. For the rest of this discussion, let's fix an exact symplectic Lefschetz fibration satisfying the conditions of Theorem \ref{th:gm-picard-lefschetz}, with its basis of vanishing cycles $(V^1,\dots,V^m)$. We make the $V^k$ into equivariant objects of $\scrF(F)$. Form a matrix $B_q$ from their $q$-intersection numbers \eqref{eq:q-intersection}:
\begin{equation} \label{eq:q-b-matrix} 
B_{q,ij} = V^i \cdot_q V^j \in \bZ[q,q^{-1}].
\end{equation}
Specializing to $q = 1$ then recovers \eqref{eq:b-matrix}. Similarly, we can introduce an upper triangular matrix $A_q$ which is a $q$-deformation of \eqref{eq:a-matrix}, namely
\begin{equation} \label{eq:q-a-matrix}
A_{q,ij} = \begin{cases} 
B_{q,ij} & i<j, \\ 
1 & i=j, \\
0 & \text{otherwise.}
\end{cases}
\end{equation}
$B_q$ determines $A_q$. Conversely, from \eqref{eq:self-bullet-2} and \eqref{eq:dual-bullet} we get a $q$-deformed version of \eqref{eq:ab-relation}:
\begin{equation} \label{eq:aqbq-relation}
B_q = A_q - q (-1)^n A_q^*,
\end{equation}
where $*$ stands for transposition combined with the change of variables $q \mapsto q^{-1}$. We equip
\begin{equation} \label{eq:q-h}
H_{\pi,q} = \bZ[q,q^{-1}]^m
\end{equation}
with the nondegenerate bilinear pairing
\begin{equation} \label{eq:q-pi}
h_0 \cdot_{\pi,q} h_1 = h_0^* \, A_q \, h_1.
\end{equation}
This is hermitian, meaning that 
\begin{equation}
(q^{k_0}h_0) \cdot_{\pi,q} (q^{k_1}h_1) = q^{k_1-k_0} (h_0 \cdot_{\pi,q} h_1).
\end{equation}

\begin{proposition} \label{th:q-hurwitz}
Up to $\bZ[q,q^{-1}]$-linear isomorphism, the pair $(H_{\pi,q},\cdot_{\pi,q})$ is independent of the choice of basis of vanishing cycles.
\end{proposition}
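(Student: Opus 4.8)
The plan is to reduce to a Hurwitz-move (Picard--Lefschetz) argument, exactly as in the classical case, and check that each elementary move on the basis of vanishing cycles induces a $\bZ[q,q^{-1}]$-linear isometry of $(H_{\pi,q}, \cdot_{\pi,q})$. Recall that any two distinguished bases of vanishing paths are related by a finite sequence of elementary transformations: a Hurwitz move, which replaces a consecutive pair $(\gamma^k,\gamma^{k+1})$ by $(\gamma^{k+1}, (\tau_{V^{k+1}})^{-1}\gamma^k)$ (with its inverse), together with changes of the chosen gradings on individual vanishing cycles and changes of orientation. The last two are easy: regrading $V^k \mapsto V^k[1]$ multiplies the $k$-th row and column of $B_q$ by $-1$ by \eqref{eq:q-shift}, which is conjugation by a diagonal sign matrix and hence preserves the pairing; similarly changing the sign of a basis vector. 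Changing the equivariant structure $V^k \mapsto V^k\langle 1\rangle$ multiplies the $k$-th row by $q^{-1}$ and column by $q$ by \eqref{eq:q-rescale}; this is conjugation by the diagonal matrix $\mathrm{diag}(1,\dots,q,\dots,1)$, which preserves the hermitian form \eqref{eq:q-pi} precisely because it is hermitian.

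For the Hurwitz move, the key input is the $q$-deformed Picard--Lefschetz formula \eqref{eq:q-picard-lefschetz}: replacing $V^{k}$ by $V' = \tau_{V^{k+1}}(V^k)$ (after swapping the order so that the twisted cycle sits in position $k+1$) and equipping $V'$ with the equivariant structure provided by that formula, the $q$-intersection numbers transform by
\[
L \cdot_q V' = L \cdot_q V^k - (L \cdot_q V^{k+1})(V^{k+1} \cdot_q V^k).
\]
So on the level of the matrix $B_q$ this is a prescribed elementary row-and-column operation, say $B_q \mapsto T B_q T^*$ for an explicit unimodular $T \in \mathit{GL}(m,\bZ[q,q^{-1}])$ (a permutation combined with a transvection whose off-diagonal entry is $\pm B_{q,k+1,k}$ or its $q\mapsto q^{-1}$ conjugate, depending on which side of the product it enters). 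I would then check that this same $T$ intertwines $A_q$ with the $A_q'$ built from the new basis via \eqref{eq:q-a-matrix}: here one uses that the new basis is again ordered so that $A_q'$ is upper-triangular with $1$'s on the diagonal, that $A_q'$ is determined by $B_q'$, and that the relation \eqref{eq:aqbq-relation} between $A_q$ and $B_q$ is preserved under $T(-)T^*$ because $*$ is an anti-involution. Consequently $h_0 \cdot_{\pi,q} h_1 = h_0^* A_q h_1 = (T^{-*}h_0)^* A_q' (T^{-1}h_1)$, i.e.\ $h \mapsto T^{-1}h$ is the desired isometry.

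The main obstacle, and the step requiring genuine care rather than bookkeeping, is verifying that the \emph{equivariant structures} on the vanishing cycles produced by iterating \eqref{eq:q-picard-lefschetz} are consistent with the ones one would assign directly to a new distinguished basis (a priori \eqref{eq:q-picard-lefschetz} only tells us that \emph{some} choice of equivariant structure on $\tau_{L_0}(L_1)$ makes the formula hold, and we must know it is the ``right'' one, up to the $\langle k\rangle$-ambiguity, which we have already shown is harmless). This is where one needs the hypotheses of Theorem~\ref{th:gm-picard-lefschetz}: the condition $\mathit{Def}^0_{V^k}=0$ guarantees every vanishing cycle in every distinguished basis can be made equivariant in the first place (a twisted vanishing cycle is again a vanishing cycle for a Hurwitz-rotated fibration, so the hypothesis is basis-independent), and the fact that the $\langle k\rangle$-shifts form a torsor over $\bZ$ means the residual indeterminacy is exactly absorbed by the diagonal conjugations discussed above. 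Once that compatibility is pinned down, the proposition follows by composing the isometries associated to the individual moves in a connecting sequence.
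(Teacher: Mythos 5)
Your proposal follows essentially the same route as the paper: reduce to changes of grading and equivariant structure (which act by diagonal conjugation by $\pm 1$ and $q^{\pm 1}$) plus elementary Hurwitz moves, for which the $q$-Picard--Lefschetz formula \eqref{eq:q-picard-lefschetz} supplies an explicit change-of-basis matrix conjugating $A_q$ into the new $\tilde{A}_q$. The paper records exactly this matrix (called $C_q$ there, satisfying $\tilde{A}_q = C_q^* A_q C_q$), and your residual worry about the equivariant structures is indeed absorbed by the $\langle k\rangle$-shifts as you observe; one small slip: the isometry should satisfy $h_0^* A_q h_1 = (T^{-1}h_0)^* A_q' (T^{-1}h_1)$, not $(T^{-*}h_0)^*$ on the left.
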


\begin{proof}
Even in terms of our fixed basis $(V^1,\dots,V^m)$, we have made two auxiliary choices, namely those of grading and of the equivariant structure. In view of \eqref{eq:q-rescale}, passing from $V^i$ to $V^i\langle k_i \rangle$ amounts to multiplying the corresponding basis vectors of $H_{\pi,q}$ by $q^{k_i}$. Similarly, by \eqref{eq:q-shift}, changing the grading of $V^i$ by $k_i$ amounts to multiplying the corresponding basis vector by $(-1)^{k_i}$.

Beyond that, any two bases of vanishing cycles can be transformed into each other by a sequence of Hurwitz moves (and their inverses):
\begin{equation}
\tilde{V}^i = \begin{cases} V^i & i < k \text{ or } i>k+1, \\
\tau_{V^k}(V^{k+1}) & i = k, \\ 
V^k & i = k+1, \\
\end{cases}
\end{equation}
Using the $q$-Picard-Lefschetz formula \eqref{eq:q-picard-lefschetz}, one shows that the two associated matrices $A_q$ and $\tilde{A}_q$ are related by
\begin{equation} \label{eq:q-transition}
\tilde{A}_q = C_q^* A_q C_q,
\end{equation}
where $C_q$ is the invertible matrix (with Kronecker symbol notation)
\begin{equation}
C_{q,ij} = \begin{cases} \delta_{ij} & j \notin \{k,k+1\}, \\
\delta_{ik} & j = k+1, \\
0 & j = k \text{ and } i \notin \{k,k+1\}, \\
1 & j = k \text{ and } i = k+1, \\
-V^k \cdot_q V^{k+1}  & j = k \text{ and } i = k.
\end{cases}
\end{equation}
\end{proof}

Proposition \ref{th:q-hurwitz} says that \eqref{eq:q-pi} is an invariant of our exact symplectic Lefschetz fibration together with the choice of dilating $\bC^*$-action on the fibre (later on, we will give a slightly more conceptual explanation for this, in terms of the equivariant Grothendieck group of a suitable category). In parallel with \eqref{eq:characterize-m}, one can define the $q$-monodromy matrix $N_q \in \mathit{GL}_m(\bZ[q,q^{-1}])$ by asking that 
\begin{equation}
h_0 \cdot_{\pi,q} N_q h_1 = (-1)^n q (h_1 \cdot_{\pi,q} h_0)^*.
\end{equation}
This is clearly independent of the choice of basis. The analogue of \eqref{eq:monodromy} is
\begin{equation} \label{eq:q-monodromy}
N_q = (-1)^n q A_q^{-1} A_q^*.
\end{equation}
It is an interesting question to what extent $N_q$ is related to the geometric concept of monodromy. A partial answer is given by the following expression, which is obtained by repeatedly applying \eqref{eq:q-picard-lefschetz}. If $v^i$ are the standard basis vectors in $H_\pi$, then
\begin{equation}
(N_q v^i) \cdot_{\pi,q} v^j = (-1)^n q^{-1} ( A_q (A_q^{-1})^* A_q)_{ij} = \begin{cases}
V^i \cdot_q V^j + \mu(V^i) \cdot_q V^j & i<j, \\
1 + \mu(V^i) \cdot_q V^i & i = j, \\ 
\mu(V^i) \cdot_q V^j & i>j,
\end{cases} 
\end{equation}
where $\mu = \tau_{V^1} \cdots \tau_{V^m}$ is the monodromy acting on $F$. In particular, one can use this formula, and its analogues for powers of $N_q$ (involving iterates of $\mu$) to obtain lower bounds on the dimensions of the groups $\mathit{HF}^*(\mu^k(V^i),V^k)$. In analogy with the classical Lefschetz fixed point formula, it seems plausible to think that the trace of $N_q$, and of its iterates, should have an interpretation in terms of a suitable version of fixed point Floer cohomology on the total space $E$ (concretely, as the supertrace of a $\bC^*$-action on that Floer cohomology). We cannot resolve this issue here, but \cite[Conjecture 6.1]{seidel00b} may provide one possible approach.

\begin{remark}
If $A_q = A$ is constant in $q$, \eqref{eq:aqbq-relation} agrees with the previously considered \eqref{eq:givental}. It is possible that, for Lefschetz fibrations arising from the Morsification of isolated hypersurface singularities, there is a relation between the theory developed here and that in \cite{givental88}. However, in general the two theories are not equivalent (as shown in Example \ref{th:q-mirrorp2}, where the powers of $q$ appearing in a single matrix entry differ by more than $1$).
\end{remark}

The applications to the topology of Lagrangian submanifolds rely on the following $q$-analogue of \eqref{eq:h-condition-2}, \eqref{eq:intersection-number}.

\begin{theorem} \label{th:main}
In the situation of Theorem \ref{th:gm-picard-lefschetz}, any equivariant exact Lagrangian brane $L \subset E$ determines a class in \eqref{eq:q-h}, denoted by $l_q \in H_{\pi,q}$, whose reduction to $q = 1$ recovers $l = [L] \in \bZ^m \iso H_\pi$. All such classes satisfy
\begin{equation} \label{eq:bq-null}
B_q\, l_q = 0.
\end{equation}
Moreover, for any pair $(L_0,L_1)$ and the associated classes $(l_{0,q},l_{1,q})$, one has
\begin{equation}
l_{0,q} \cdot_{\pi,q} l_{1,q} = l_{0,q}^* \, A_q \, l_{1,q} = L_0 \cdot_q L_1. 
\label{eq:obtain-bullet}
\end{equation}
\end{theorem}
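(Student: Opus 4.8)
The plan is to recast everything in terms of (equivariant) Grothendieck groups, exploiting that the Lefschetz thimbles generate $\scrF(E)$. Recall from \cite{seidel04} the structure of the directed Fukaya category of the fibration: the thimbles $\Delta^1,\dots,\Delta^m$ form a full exceptional collection, so that $\mathit{HF}^*(\Delta^i,\Delta^j)=0$ for $i>j$, $\mathit{HF}^*(\Delta^i,\Delta^i)\iso\bC$ in degree $0$, and $\mathit{HF}^*(\Delta^i,\Delta^j)\iso\mathit{HF}^*(V^i,V^j)$ (Floer cohomology in the fibre) for $i<j$. The first step is to check that the $\bC^*$-action on $\scrF(E)$ produced by Theorem \ref{th:gm-picard-lefschetz}, which is built from the one on $\scrF(F)$, can be normalised so that each $\Delta^i$ becomes equivariant, with equivariant structure restricting to the chosen one on $V^i$ and with $\bC^*$ acting trivially on $\mathit{HF}^0(\Delta^i,\Delta^i)$. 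Because the $\Delta^i$ are exceptional, the equivariant Grothendieck group of the subcategory they generate is free over $\bZ[q,q^{-1}]$ on $[\Delta^1],\dots,[\Delta^m]$, which identifies it with $H_{\pi,q}$ of \eqref{eq:q-h}. An equivariant $L\subset E$ is equivariantly quasi-isomorphic to a twisted complex of thimbles, hence has a well-defined class in that group; this is $l_q$. Setting $q=1$ forgets the $\bC^*$-structure and turns $l_q$ into the class of $L$ in $K_0(\scrF(E))$; under the standard isomorphism $K_0(\scrF(E))\iso H_\pi$ (which sends $[\Delta^i]$ to the classical thimble class, consistently with \eqref{eq:v-matrix}) this is the classical class $l=[L]$.

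For \eqref{eq:obtain-bullet}, let $\chi_q$ denote the pairing on $H_{\pi,q}$ which on a pair $([L_0],[L_1])$ is the graded character of the virtual $\bC^*$-representation $\sum_k(-1)^k\mathit{HF}^k(L_0,L_1)$; it is additive over exact triangles, and by \eqref{eq:q-intersection-2} one has $\chi_q([L_0],[L_1])=L_0\cdot_q L_1$. Both $\chi_q$ and the pairing $h_0\cdot_{\pi,q}h_1=h_0^*A_qh_1$ are sesquilinear over $\bZ[q,q^{-1}]$ --- conjugate-linear in the first variable, because shifting the equivariant structure of the first argument by $k$ tensors Floer cohomology with $\langle-k\rangle$, by \eqref{eq:shift-equivariance} --- so it is enough to compare them on the basis $\{[\Delta^i]\}$. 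There $\cdot_{\pi,q}$ has Gram matrix $A_q$ by construction, while $\chi_q([\Delta^i],[\Delta^j])=\Delta^i\cdot_q\Delta^j$ equals $0$ for $i>j$, equals $1$ for $i=j$, and equals $V^i\cdot_q V^j=B_{q,ij}$ for $i<j$ (here the compatibility of the $\bC^*$-action with restriction to the fibre enters) --- which is exactly the matrix \eqref{eq:q-a-matrix}. Hence $\chi_q=\cdot_{\pi,q}$ everywhere, and evaluating at $([L_0],[L_1])$ gives $l_{0,q}^*A_ql_{1,q}=L_0\cdot_q L_1$.

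For \eqref{eq:bq-null}, I would follow the classical argument, in which $Bl=0$ because $\partial[L]=0$ in $H_{n-1}(F)$. The boundary map $H_\pi=H_n(E,F)\to H_{n-1}(F)$ lifts, in the Lefschetz-fibration setup, to a $\bZ[q,q^{-1}]$-linear map from $H_{\pi,q}$ to the equivariant Grothendieck group of the subcategory of $\scrF(F)$ generated by the $V^i$, sending $[\Delta^i]$ to $[V^i]$ --- concretely, by restricting twisted complexes of thimbles to a fibre just inside $\ast$. Since $L$ is closed it can be isotoped off that fibre, so $l_q=\sum_j l_{q,j}[\Delta^j]$ maps to $\sum_j l_{q,j}[V^j]=0$. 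The pairing $\chi_q$ on that Grothendieck group is $\bZ[q,q^{-1}]$-linear in the second variable and satisfies $\chi_q([V^i],[V^j])=V^i\cdot_q V^j=B_{q,ij}$ by \eqref{eq:q-b-matrix}, so for every $i$ we get $(B_ql_q)_i=\sum_j B_{q,ij}\,l_{q,j}=\chi_q\!\big([V^i],\textstyle\sum_j l_{q,j}[V^j]\big)=0$, i.e.\ $B_ql_q=0$.

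The hard part is the bookkeeping of equivariant structures, which is where the real content sits. One must examine the construction behind Theorem \ref{th:gm-picard-lefschetz} closely enough to know that the resulting $\bC^*$-action on $\scrF(E)$ makes the thimbles equivariant compatibly with the $V^i$ and reproduces $A_q$ on the nose --- with the correct powers of $q$, not merely up to the rescaling ambiguity \eqref{eq:q-rescale} --- which is exactly the point where the ``dilating'' hypothesis and the dimensional induction from fibre to total space are used. The remaining technical points --- the precise sense in which the non-compact thimbles generate $\scrF(E)$, the functoriality and equivariance of restriction to the fibre, and the independence of $l_q$ from the chosen twisted-complex presentation of $L$ (which follows from the exceptionality of $\{\Delta^i\}$) --- are furnished by the Lefschetz-fibration machinery of \cite{seidel04}, and constitute the ``more conceptual'' Grothendieck-group picture promised after Proposition \ref{th:q-hurwitz}.
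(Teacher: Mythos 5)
Your Grothendieck-group framework and the argument for \eqref{eq:obtain-bullet} are essentially sound and match the spirit of the paper: one represents $L$ by an equivariant twisted complex over the directed category of thimbles, obtains $l_q \in K_0^{\bC^*} \iso \bZ[q,q^{-1}]^m$, and compares the two sesquilinear pairings on the thimble basis, where they both reduce to $A_q$. The specialization to $q=1$ also works out (the paper makes this precise via \eqref{eq:thimbles-floer} and the identity $\Delta^k\cdot_\pi L = \chi(\mathit{HF}^*(\Delta^k,L))$, but the content is what you describe).

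However, the argument you propose for \eqref{eq:bq-null} has a real gap. You posit a $\bZ[q,q^{-1}]$-linear ``boundary map'' from $H_{\pi,q}$ to the equivariant Grothendieck group of the $V^i$-subcategory of $\scrF(F)$ sending $[\Delta^i]\mapsto[V^i]$, constructed by ``restricting twisted complexes of thimbles to a nearby fibre,'' and you assert that the class of a closed $L$ maps to zero because $L$ can be pushed off that fibre. Neither assertion is available in the framework of \cite{seidel04}. There is no $A_\infty$-functor from $\scrF(\pi)$ (or $\Tw(\scrA)$) to $\scrF(F)$ sending thimbles to their boundary vanishing cycles: the directed category $\scrA$ sits inside the $V^i$-category $\scrB$ as a subcategory (with the ``lower-triangular'' morphisms brutally truncated), not the other way around, and the twisting data of a twisted complex over $\scrA$ does not descend to a twisted complex over $\scrB$. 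And even granting such a functor, disjointness of $L$ from a fibre does not translate into its $K$-class mapping to zero without further argument --- the twisted complex presenting $L$ lives in $\Tw(\scrA)$, not geometrically on $L$.

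The paper's actual proof circumvents exactly this difficulty with the double-branched-cover trick. Passing to $\tilde\pi:\tilde E \to \tilde D$, one doubles the basis of thimbles; the resulting directed category $\tilde{\scrA}$ has $2m$ objects, with Mukai pairing given by the block matrix $\tilde{A}_q = \bigl(\begin{smallmatrix} A_q & B_q \\ 0 & A_q \end{smallmatrix}\bigr)$, and $L$ (pushed into $E^*\subset \tilde E$) is \emph{disjoint from the first $m$ thimbles of $\tilde\pi$}. The vanishing $\mathit{HF}^*(L,\tilde\Delta^k)=0$ and $\mathit{HF}^*(\tilde\Delta^k,L)=0$ for $k\le m$ then yields, via the block structure, both that $\tilde l_q = (0,\dots,0,l_q)$ and that $B_ql_q = 0$, all inside the single category $\Tw(\tilde{\scrA})$ with a single equivariant structure --- no restriction functor to the fibre is invoked anywhere. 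Your approach needs this (or an equivalent) device to close the gap; as written, the $B_q l_q = 0$ step is not established.
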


%

\begin{corollary} \label{th:primitive}
In the situation of Theorem \ref{th:main}, take a Lagrangian submanifold $L \subset E$ which is a rational homology sphere and {\em Spin}. Then the class $[L] \in H_n(E)/(\mathit{torsion})$ is nonzero and primitive. Moreover, if $L_1,\dots,L_r$ are such submanifolds which are pairwise disjoint (or disjoinable by Lagrangian isotopies), then the $[L_i]$ are linearly independent.
\end{corollary}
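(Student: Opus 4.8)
The plan is to route both statements through the class $l_q\in H_{\pi,q}$ furnished by Theorem~\ref{th:main}, and then to descend to $q=1$ by a local analysis there. Since $L$ (resp.\ each $L_i$) is a rational homology sphere we have $H^1(L;\bC)=0$, so $L$ is automatically exact and can be made equivariant ($\mathit{Def}^0_L=0$); with the {\em Spin} hypothesis this places us in the situation of Theorem~\ref{th:main}. By \eqref{eq:self-bullet-2} and \eqref{eq:obtain-bullet},
\[
l_q^*\,A_q\,l_q = 1+(-1)^n q,\qquad l_q|_{q=1}=l:=[L]\in\bZ^m\iso H_\pi .
\]
When the $L_i$ are pairwise disjoint, or disjoinable by Lagrangian isotopies, their Floer cohomology vanishes, so $L_i\cdot_q L_j=0$ and $l_{i,q}^*A_q l_{j,q}=\delta_{ij}\big(1+(-1)^nq\big)$ for $i\neq j$. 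It suffices to prove the assertions for the vectors $l,l_i\in H_\pi$: the map $j\colon H_n(E)\to H_\pi$ kills torsion and sends $[L]$ to $l$, so nonvanishing and primitivity of $l$, and linear independence of the $l_i$, give the corresponding facts in $H_n(E)/(\mathit{torsion})$.

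If $n$ is even the argument is classical and uses only \eqref{eq:chi}: specializing at $q=1$ gives $l^*Al=\chi(L)=2$, whence $l\neq 0$, and $l=d\,l'$ forces $d^2\mid 2$, so $d=1$; similarly $l_i^*Al_j=2\delta_{ij}$, and left-multiplying a relation $\sum_i c_i l_i=0$ by $l_j^*A$ gives $2c_j=0$. The content is the case $n$ odd, where the pairing is skew and the $q=1$ specialization is empty. Here I would reduce modulo $(q-1)^2$, i.e.\ work over $\bZ[q,q^{-1}]/(q-1)^2\iso\bZ[\epsilon]/(\epsilon^2)$ with $\epsilon=q-1$ and $q^{-1}\equiv 1-\epsilon$. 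Writing $l_q\equiv\lambda_0+\lambda_1\epsilon$ with $\lambda_0=l$ and $\lambda_1\in\bZ^m$, so $l_q^*\equiv\lambda_0^*-\lambda_1^*\epsilon$, and $A_q\equiv A+A_1\epsilon$ with $A_1\in\mathit{Mat}(m\times m,\bZ)$, and matching coefficients in $l_q^*A_q l_q=1-q=-\epsilon$: the $\epsilon^0$ term is $\lambda_0^*A\lambda_0=0$ (which is just $Bl=0$, i.e.\ \eqref{eq:bq-null} at $q=1$), while the $\epsilon^1$ term reads
\[
\lambda_0^*A\lambda_1-\lambda_1^*A\lambda_0+\lambda_0^*A_1\lambda_0=-1 .
\]
If $l=\lambda_0$ were zero the left side would vanish, and if $l$ lay in $p\,\bZ^m$ for a prime $p$ every term on the left would be divisible by $p$; since the right side is $-1$, $l$ is nonzero and primitive.

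For linear independence with $n$ odd, suppose $\sum_i c_i l_i=0$ with $c_i\in\bZ$, and set $v_q=\sum_i c_i l_{i,q}\in\bZ[q,q^{-1}]^m$. Then $v_q|_{q=1}=\sum_i c_i l_i=0$, so $v_q=(q-1)u_q$ with $u_q\in\bZ[q,q^{-1}]^m$. Pairing with $l_{j,q}$ gives $v_q^*A_q l_{j,q}=c_j(1-q)$, whereas $v_q^*=-q^{-1}(q-1)u_q^*$; cancelling the factor $q-1$ and setting $q=1$ yields $c_j=u_1^*A\,l_j$ with $u_1=u_q|_{q=1}\in\bZ^m$. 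Left-multiplying $\sum_j c_j l_j=0$ by $u_1^*A$ then gives $\sum_j c_j^2=0$, so all $c_j=0$.

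The step I expect to be the crux --- and the only genuinely delicate one --- is this descent from $q$ to $q=1$. Theorem~\ref{th:main} supplies clean identities over $\bZ[q,q^{-1}]$, but a vector that is primitive in $\bZ[q,q^{-1}]^m$ can specialize at $q=1$ to a divisible, or even zero, vector, so one cannot conclude by working over $\bZ[q,q^{-1}]$ alone. What makes it go through is that $1+(-1)^nq$ is as indivisible as possible near $q=1$: its value there is $2$, squarefree, when $n$ is even, and the coefficient of $(q-1)$ is the unit $-1$ when $n$ is odd. So the first non-vanishing term of the expansion at $q=1$ already forces the result, and this is precisely where the dilating $\bC^*$-action --- via \eqref{eq:self-bullet-2} --- is essential.
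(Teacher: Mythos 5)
Your proof is correct and follows essentially the same route as the paper: both cases reduce to the identity $l_q^*A_ql_q = 1+(-1)^nq$ from Theorem~\ref{th:main} and \eqref{eq:self-bullet-2}, and extract nonvanishing, primitivity, and independence by analyzing the order-one behaviour of this quantity at $q=1$. The only cosmetic differences are that the paper phrases the odd-$n$ argument via divisibility by $(1-q)^2$ in $\bZ[q,q^{-1}]$ (resp.\ $(\bZ/p)[q,q^{-1}]$) rather than truncating to $\bZ[\epsilon]/\epsilon^2$, and for linear independence it computes $v_q^*A_qv_q=\sum c_k^2(1-q)$ directly instead of pairing $v_q$ against each $l_{j,q}$; these are interchangeable bookkeeping choices.
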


\begin{proof}
This follows the model of \cite{seidel12, seidel13}. If $n$ is even, \eqref{eq:chi} says that $[L] \cdot [L] = 2$, which implies the desired result. We will therefore assume from now on that $n$ is odd. In view of \eqref{eq:self-bullet-2}, \eqref{eq:obtain-bullet} simplifies to
\begin{equation} \label{eq:self-q}
l_q^* \, A_q \, l_q = 1-q.
\end{equation}
Suppose that $[L]$ is torsion, which means that the $q = 1$ specialization of $l_q$ vanishes. Then $l_q$ is a multiple of $(1-q)$, hence $l_q^* \, A_q l_q$ is a multiple of $(1-q^{-1})(1-q) = -q^{-1}(1-q)^2$, contradicting \eqref{eq:self-q}. Similarly, if $[L]$ is not primitive, $l_q = (1-q)x + py$ for some prime $p$, and then $l_q^* \, A_q l_q$ is a multiple of $(1-q)^2$ in $(\bZ/p)[q,q^{-1}]$, which is again a contradiction (take the derivative at $q = 1$).

In the situation of the last statement, we know that $L_i \cdot_q L_j = 0$ for $i \neq j$, since the Floer cohomology vanishes. Given a relation $\sum_k c_k [L_k] = 0$ with integer coefficients $c_k$, we have $\sum_k c_k l_{k,q} = (1-q)x$ for some $x$, hence $\sum_k c_k^2 (1-q)$ is a multiple of $(1-q)^2$, which implies that all the $c_k$ must vanish.
\end{proof}

For instance, Corollary \ref{th:primitive} applies to the Milnor fibres from Example \ref{th:stabilized}, which yields an improved version of \cite[Theorem 1.6]{seidel13}. 

\subsection{Examples reconsidered}
Besides general statements such as Corollary \ref{th:primitive}, Theorem \ref{th:main} can yield additional information when applied to specific manifolds (like the classical Picard-Lefschetz theory on which it is modelled). We illustrate this by returning to two examples considered previously. The computation of the relevant matrices \eqref{eq:q-b-matrix} is deferred to Section \ref{subsec:the-examples}; here, we only state the result of those computations, and discuss the implications.

\begin{example} \label{th:q-ab-example}
Consider the hypersurface $X$ from Example \ref{th:ab-example}. Assume from now on that $n \geq 3$. Then, the fibre $Y$ of $\pi: X \rightarrow \bC$ admits a dilating $\bC^*$-action by Example \ref{th:simple-milnor}, which allows us to apply Theorems \ref{th:gm-picard-lefschetz} and \ref{th:main}. The analogue of \eqref{eq:s-matrix} for $q$-intersection numbers in $Y$ turns out to be
\begin{equation}
\label{eq:deformed-s-matrix}
\Sigma^i \cdot_q \Sigma^j = \begin{cases} 1+q(-1)^{n-1} & i = j, \\
-1 & j=i-1, \\
q(-1)^n & j=i+1, \\
0 & \text{otherwise.}
\end{cases}
\end{equation}
The analogue of \eqref{eq:special-b-matrix} is again a cyclic band matrix, obtained by replacing $(-1)^n$ with $q(-1)^n$ everywhere:
\begin{equation} \label{eq:boldb}
B_{q,ij} = \begin{cases} 
q(-1)^n & i-j = -a\; \mathrm{mod}\; (a+b), \\
1 + q(-1)^n & i-j = -a+1,\dots,-1\; \mathrm{mod}\; (a+b), \\
1 - q(-1)^n & i = j, \\
-1 - q(-1)^n & i-j = 1,\dots,a-1\; \mathrm{mod}\; (a+b), \\
-1 & i-j = a\; \mathrm{mod}\; (a+b), \\
0 & \text{otherwise.}
\end{cases}
\end{equation}
The nullspace of $B_q$ has rank $1$, and is generated by $h = (1,\dots,1)$ (this is easily seen by power series expansion around $q = -(-1)^n$). Taking the triangular matrix $A_q$ obtained from $B_q$ as in \eqref{eq:q-a-matrix}, one finds that for any $f \in \bZ[q,q^{-1}]$,
\begin{equation} \label{eq:lambda-lambda}
(fh)^* \, A_q \, fh = f^*f \, ba(1+q(-1)^n).
\end{equation}
Since $ba \geq 2$ by assumption, \eqref{eq:lambda-lambda} can never equal $1+q(-1)^n$. In view of \eqref{eq:obtain-bullet} and \eqref{eq:self-bullet-2}, we have shown:
\begin{equation} \label{eq:no-spheres}
\text{\it $X$ cannot contain Lagrangian $\bQ$-homology spheres which are {\em Spin}.}
\end{equation}
In a slightly different direction, assume that $f$ is not zero. Then, neither is $f^* f$, which means that the lowest and highest power of $q$ in \eqref{eq:lambda-lambda} both have coefficients which are nonzero multiples of $ab$. By looking at \eqref{eq:self-bullet}, one concludes:
\begin{equation} \label{eq:total-betti}
\parbox{37em}{\it Suppose that $L \subset X$ is a closed Lagrangian submanifold which satisfies $H^1(L) = 0$, is {\em Spin}, and has nonzero homology class. Then, the total sum of the Betti numbers of $L$ is at least $2ab$.
}
\end{equation}

It is worth while comparing this to what's already known about $X$. For $n$ even, the existence of Lagrangian homology spheres can be ruled out by topological arguments based on \eqref{eq:2ab}, which of course is the $q = 1$ specialization of \eqref{eq:lambda-lambda}, and the same applies to \eqref{eq:total-betti}. As mentioned before, the case $a = 1$ (and any $b$ and $n$) belongs to the class of manifolds studied in \cite{maydanskiy-seidel09}; the results obtained there imply the following:
\begin{equation} \label{eq:empty}
\parbox{37em}{\it $X_{1,b}$ can't contain any closed exact Lagrangian submanifolds.}
\end{equation}
This is much stronger than \eqref{eq:no-spheres} or \eqref{eq:total-betti}, but it is unclear whether one should expect a statement like \eqref{eq:empty} to hold for general $(a,b)$.
%
\end{example}

\begin{example} \label{th:q-mirrorp2}
We return to Example \ref{th:mirrorp2-example}, again assuming $n \geq 3$. The counterpart of \eqref{eq:mirrorp2-b} for $q$-intersection numbers is
\begin{equation} \label{eq:boldb-2}
B_q = \begin{pmatrix} 
1 - q(-1)^n & - q^{-1}(-1)^n - 1 - (-1)^n q & 1 + q(-1)^n + q^2 \\
1 +q(-1)^n + q^2 & 1-q(-1)^n & -1-q(-1)^n - q^2 \\
-q^{-1}(-1)^n -1 - (-1)^n q & 1 + q^{-1} (-1)^n + q (-1)^n & 1 - q(-1)^n
\end{pmatrix}
\end{equation}
which has
\begin{equation} \label{eq:determinant}
\det(B_q) = 
q^{-2}(q-1)^2 (q+1)^2 (q-(-1)^n) (q^2+1) \neq 0.
\end{equation}
As in the previous example, this implies \eqref{eq:no-spheres}. In analogy with \eqref{eq:total-betti}, one also has:
\begin{equation} 
\parbox{37em}{Any closed Lagrangian submanifold in $X$ which has zero first Betti number and is {\em Spin} must be nullhomologous.}
\end{equation}
It is not known whether $X$ contains an exact Lagrangian submanifold, but for even $n$ we can show a weaker statement, namely that it contains an orientable (and {\em Spin}) Lagrangian submanifold with nonzero Floer cohomology. Namely, consider the submanifold $L$ from \eqref{eq:s1sn}. This is not Lagrangian with respect to the given symplectic form on $X$, which is the constant form in the variables $(x,y,z_1,\dots,z_{n-1})$, denoted here by $\omega_{(x,y,z_1,\dots,z_{n-1})}$. However, $L$ is Lagrangian for the degenerate form $\omega_{(z_1,\dots,z_n)}$, where $z_n$ is as in \eqref{eq:zn}; and by a Moser argument, one can perturb it so that it becomes Lagrangian for
\begin{equation} \label{eq:epsilon-form}
\omega_\epsilon = \omega_{(z_1,\dots,z_n)} + \epsilon\, \omega_{(x,y,z_1,\dots,z_{n-1})}, \quad \text{$\epsilon>0$ small.}
\end{equation}
This form yields a symplectic structure isomorphic to $\omega_{(x,y,z_1,\dots,z_{n-1})}$ (see the uniqueness statement made in Example \ref{th:hypersurface}), hence we may use it to define $\scrF(X)$. Denote the perturbed Lagrangian submanifold by $L_\epsilon$. The meromorphic volume form $dz_1 \wedge \cdots \wedge dz_n/(z_1^2 + \cdots + z_n^2)$ on $\bC^n$ extends to a smooth holomorphic volume form on $X$. For $n = 2$, $L_\epsilon$ has zero Maslov number, which ensures that $\mathit{HF}^*(L_\epsilon,L_\epsilon) \neq 0$. For $n>2$, $L_\epsilon$ is monotone (with minimal Maslov number $n-2$); since it represents a nonzero homology class, the Floer cohomology is again nonzero by \cite[Corollary 3.1]{albers10} (alternatively, for $n \geq 6$, one can obtain the same conclusion by applying the spectral sequence from \cite{oh96}).
\end{example}
%

\section{Homological algebra background\label{sec:background}}

\subsection{$A_\infty$-categories}
Throughout, we will work with small $A_\infty$-categories $\scrA$ defined over $\bC$. Our sign conventions follow \cite{seidel04}. This means that the $A_\infty$-associativity equations are
\begin{equation} \label{eq:ainfty}
\sum_{i,j} (-1)^\ast \mu_{\scrA}^{d-j+1}(a_d,\dots,a_{i+j+1},
\mu_{\scrA}^j(a_{i+j},\dots,a_{i+1}),a_i,\dots,a_1) = 0,
\end{equation}
with $\ast = |a_1|+\cdots+|a_i|-i$. In order to transition to the traditional conventions for chain complexes and chain maps, one should define the differential on $\hom_\scrA(X_0,X_1)$ to be
\begin{equation} \label{eq:signed-differential}
a \mapsto (-1)^{|a|} \mu^1_{\scrA}(a),
\end{equation}
and the composition as the chain map
\begin{equation} \label{eq:signed-multiplication}
a_2 \cdot a_1 = (-1)^{|a_1|} \mu^2_{\scrA}(a_2, a_1).
\end{equation}
In this section and the following one, all $A_\infty$-categories are assumed to be strictly unital, with strict identity endomorphisms denoted by $e_X \in \hom_{\scrA}^0(X,X)$. These satisfy a series of equations starting with
\begin{equation}
\begin{aligned}
& \mu^1_{\scrA}(e_X) = 0, \\
& \mu^2_{\scrA}(a,e_{X_0}) = a, \;\; \mu^2_{\scrA}(e_{X_1},a) = (-1)^{|a|} a \quad \text{for }a \in \hom_{\scrA}(X_0,X_1), \\
& \dots
\end{aligned} 
\end{equation}
The cohomology level map induced by \eqref{eq:signed-multiplication} makes $H^*(\scrA)$ into a $\bC$-linear graded category (in the classical sense), with identity morphisms $[e_X]$. Recall that $\scrA$ is called proper if the spaces $H^*(\hom_{\scrA}(X_0,X_1))$ are finite-dimensional (since these are graded vector spaces, we should clarify that this means of finite total dimension).

\subsection{Twisted complexes, $A_\infty$-modules}
Given any $\scrA$, one can introduce the larger $A_\infty$-category of twisted complexes $\Tw(\scrA)$. As an intermediate step, one considers the additive enlargement $\scrA^\oplus$, whose objects are formal direct sums 
\begin{equation} \label{eq:formal-sum}
C = \bigoplus_{f \in F} W_f \otimes X_f,
\end{equation}
where $F$ is some finite set, the $W_f$ are finite-dimensional graded vector spaces, and the $X_f$ are objects of $\scrA$. Morphisms are defined by
\begin{equation} \label{eq:formal-sum-morphism}
\hom_{\scrA^\oplus}(C_0,C_1) = \!\!\bigoplus_{f_0 \in F_0,\, f_1 \in F_1}\!\!\! \mathit{Hom}(W_{0,f_0},W_{1,f_1}) \otimes \hom_{\scrA}(X_{0,f_0},X_{1,f_1}),
\end{equation}
and the $A_\infty$-structure combines that of $\scrA$ with composition of linear maps between the $W_f$-spaces (and some auxiliary signs). A twisted complex is a pair $(C,\delta_C)$, where $C$ is as in \eqref{eq:formal-sum}, and the differential $\delta_C \in \hom^1_{\scrA^\oplus}(C,C)$ has the following two properties: it is strictly decreasing with respect to some filtration of $F$; and it satisfies the generalized Maurer-Cartan equation
\begin{equation} \label{eq:maurer-cartan}
\mu^1_{\scrA^\oplus}(\delta_C) + \mu^2_{\scrA^\oplus}(\delta_C,\delta_C) + \cdots = 0.
\end{equation}
The $A_\infty$-category $\Tw(\scrA)$ has twisted complexes as objects; the morphism spaces are the same as in $\scrA^\oplus$; and the $A_\infty$-structure is obtained from that of $\scrA^\oplus$ by a deformation which inserts the differentials arbitrarily many times. For instance, for $a \in \hom_{\Tw(\scrA)}(C_0,C_1)$, one sets
\begin{equation} \label{eq:rho-mu}
\mu^1_{\Tw(\scrA)}(a) = \mu^1_{\scrA^{\oplus}}(a)
+ \mu^2_{\scrA^\oplus}(\delta_{C_1},a) + \mu^2_{\scrA^{\oplus}}(a,\delta_{C_0}) + \cdots.
\end{equation}

If $C$ is a twisted complex and $D$ is a finite-dimensional chain complex of vector spaces, one can form a new twisted complex $D \otimes C$, by tensoring the spaces $W_f$ in \eqref{eq:formal-sum} with $D$, and equipping the outcome with a combined differential. Any  $D$ admits a filtration with respect to which the differential is strictly decreasing; one uses that and the given filtration of $C$ to define a filtration of $D \otimes C$, which is the only nontrivial part of checking that this is a twisted complex. Another useful operation on twisted complexes is to start with $a \in \hom_{\Tw(\scrA)}^0(C_0,C_1)$ which is closed (meaning $\mu^1_{\mathit{Tw}(\scrA)}(a) = 0$), and form its mapping cone $\mathit{Cone}(a) \in \Ob(\mathit{Tw}(\scrA))$. We will need a particular combination of these two processes later on. Namely, let $C_0$ and $C_1$ be twisted complexes, such that the complex $\hom_{\Tw(\scrA)}(C_0,C_1)$ is finite-dimensional. Then there is a canonical evaluation morphism $\mathit{ev}: \hom_{\Tw(\scrA)}(C_0,C_1) \otimes C_0 \rightarrow C_1$, which a closed morphism in $\Tw(\scrA)$. By twisting $C_1$ along $C_0$, we mean forming the twisted complex
\begin{equation} \label{eq:t-twisted}
T_{C_0}(C_1) = \mathit{Cone}(\mathit{ev}).
\end{equation}

Any $A_\infty$-category has an intrinsic notion of exact triangle \cite[Section 3]{seidel04}. As an example, take a twisted complex $C_1$ and a subcomplex $C_0 \subset C_1$, defined by taking subspaces of the vector spaces in \eqref{eq:formal-sum} in a way which is compatible with the differential. Correspondingly, one has a quotient twisted complex $C_2 = C_1/C_0$, and they form an exact triangle
\begin{equation} \label{eq:triangle}
\xymatrix{
C_0 \ar[rr] && C_1 \ar[d] \\
&& C_2 \ar[ull]^-{[1]}
}
\end{equation}
where the connecting homomorphism (marked by $[1]$ in our notation, because it has degree $1$) is canonical in $H^*(\Tw(\scrA))$. By applying this repeatedly, one can find a collection of exact triangles which decompose any twisted complex into objects of $\scrA$ (up to shifts). Conversely, one recovers $C_1$ as the mapping cone of the morphism $C_2 \rightarrow C_0[1]$, so any twisted complex can be built up from objects of $\scrA$ by (shifts and) forming repeated mapping cones.

We also want to consider the $A_\infty$-category $\Mod(\scrA)$ of (strictly unital, right) $A_\infty$-modules over $\scrA$. Such an $A_\infty$-module $\scrM$ assigns to each $X \in \Ob(\scrA)$ a graded vector space $\scrM(X)$, together with operations
\begin{equation}
\mu_{\scrM}^{d+1}: \scrM(X_d) \otimes \hom_{\scrA}(X_{d-1},X_d) \otimes \cdots \otimes \hom_{\scrA}(X_0,X_1) \longrightarrow \scrM(X_0)[1-d].
\end{equation}
In particular, $\scrM(X)$ is a chain complex with differential $\mu^1_{\scrM}$, up to a change of sign as in \eqref{eq:signed-differential}. $A_\infty$-modules admit analogues of the constructions discussed above (tensor product with a chain complex of vector spaces, which in this case can be arbitrary; and mapping cone). There is a canonical cohomologically full and faithful $A_\infty$-functor
\begin{equation} \label{eq:yoneda}
\scrA \longrightarrow \Mod(\scrA),
\end{equation}
the Yoneda embedding, which takes an object $Y$ to an $A_\infty$-module $\scrY$ with $\scrY(X) = \hom_{\scrA}(X,Y)$. One can extend \eqref{eq:yoneda} to a cohomologically full and faithful embedding of $\Tw(\scrA)$, either by directly generalizing the definition, or (equivalently) by using the composition
\begin{equation} \label{eq:tw-yoneda}
\Tw(\scrA) \xrightarrow{\text{Yoneda}} \Mod(\Tw(\scrA)) \xrightarrow{\text{restriction}} \Mod(\scrA).
\end{equation}

\subsection{Directedness}
A particularly well-behaved class of $A_\infty$-categories are the directed ones (these were first introduced by Kontsevich \cite{kontsevich98}, based on the theory of exceptional collections and its dg refinement \cite{bondal-kapranov91}). Let $\scrA$ be an $A_\infty$-category with finitely many ordered objects, $\Ob(\scrA) = \{X^1,\dots,X^m\}$. One says that $\scrA$ is directed if
\begin{equation} \label{eq:directed}
\hom_{\scrA}(X^i,X^j) = \begin{cases} \text{finite-dimensional} & i<j, \\
\bC \, e_{X^i} & i = j, \\ 0 & i>j. \end{cases}
\end{equation}
Let $\scrB$ be an $A_\infty$-category such that $\mathit{hom}_{\scrB}(X_0,X_1)$ is finite-dimensional for all $(X_0,X_1)$. Take an ordered collection $(X^1,\dots,X^m)$ of objects in it. One defines the associated directed $A_\infty$-subcategory $\scrA$ by taking the $X^i$ as objects, and essentially imposing \eqref{eq:directed} by brute force:
\begin{equation}
\hom_{\scrA}(X^i,X^j) = \begin{cases} \hom_{\scrB}(X^i,X^j) & i<j, \\ \bC \, e_{X^i} & i = j,
\\ 0 & i>j, \end{cases}
\end{equation}
with the $A_\infty$-structure inherited from $\scrB$ (and such that the $e_{X^i}$ are strict units). One can generalize this construction to the case where $\scrB$ is proper, but it won't be strictly unique anymore (and we don't need the generalization for our applications).

\begin{lemma} \label{th:proper-vs-strictly-proper}
Let $\scrA$ be a directed $A_\infty$-category, and $\scrM_0$ an $A_\infty$-module over $\scrA$ such that for each $k$, $H^*(\scrM_0(X^k))$ is finite-dimensional. Then there is a quasi-isomorphic $A_\infty$-module $\scrM_1$ such that $\scrM_1(X^k)$ is finite-dimensional for all $k$.
\end{lemma}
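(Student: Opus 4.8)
The plan is to build $\scrM_1$ by induction on the objects $X^1, \dots, X^m$, exploiting the directedness of $\scrA$ to control morphism spaces one object at a time. Write $\scrA_k$ for the full directed subcategory on $\{X^1,\dots,X^k\}$, and note that since $\hom_{\scrA}(X^i,X^j) = 0$ for $i>j$, an $A_\infty$-module over $\scrA$ is determined by a compatible tower of modules over the $\scrA_k$; equivalently, the value $\scrM(X^k)$ together with the higher multiplications $\mu^{d+1}_{\scrM}$ ending at $X^k$ only involve $\hom$-spaces $\hom_{\scrA}(X^i,X^j)$ with $i \leq j \leq k$. So I would inductively produce a quasi-isomorphism $\scrM_0|\scrA_k \simeq \scrM_1^{(k)}$ with each $\scrM_1^{(k)}(X^i)$ ($i \leq k$) finite-dimensional, and show the homotopy extends to the next stage.

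Concretely: the base case $k=1$ is trivial, since $\hom_{\scrA}(X^1,X^1) = \bC\, e_{X^1}$ forces $\scrM_0(X^1)$ to be just a chain complex with finite-dimensional cohomology, which is quasi-isomorphic to a finite-dimensional complex (e.g.\ its cohomology with zero differential, or a finite-dimensional minimal model). For the inductive step, suppose $\scrM_1^{(k-1)}$ has been constructed over $\scrA_{k-1}$, quasi-isomorphic to $\scrM_0|\scrA_{k-1}$ via some $\phi$. First use the homotopy transfer / invariance of $A_\infty$-modules to transport $\scrM_0$ along $\phi^{-1}$ and reduce to the case where $\scrM_0$ itself already equals $\scrM_1^{(k-1)}$ on the objects $X^1,\dots,X^{k-1}$; only $\scrM_0(X^k)$ need be fixed up. Now $\scrM_0(X^k)$ is a complex with finite-dimensional cohomology, so choose a finite-dimensional complex $N$ and a quasi-isomorphism $\psi\colon N \to \scrM_0(X^k)$ of chain complexes. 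The task is to promote $N$ to the value at $X^k$ of an $A_\infty$-module $\scrM_1$ over $\scrA_k$ (agreeing with $\scrM_1^{(k-1)}$ below $X^k$) together with an extension of $\psi$ to an $A_\infty$-module homomorphism $\scrM_1 \to \scrM_0$. This is a standard obstruction-theoretic argument: one builds the operations
\[
\mu^{d+1}_{\scrM_1}\colon N \otimes \hom_{\scrA}(X^{i_{d-1}},X^{i_d}) \otimes \cdots \otimes \hom_{\scrA}(X^{i_0},X^{i_1}) \longrightarrow N[1-d]
\]
(necessarily with $i_d = k$, and $i_0 < \cdots < i_d$ by directedness, so the length $d$ is bounded by $k$ — this finiteness is what makes the induction terminate) and the components of the homomorphism one at a time, the obstruction at each stage lying in a cohomology group of $\mathit{Hom}$-complexes built from $N$ and the $\hom_{\scrA}$-spaces; since $\psi$ is a quasi-isomorphism, the relevant obstruction groups vanish and the extension exists. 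Setting $\scrM_1 = \scrM_1^{(m)}$ finishes the construction.

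The main obstacle is the inductive step, specifically checking that the obstruction classes to extending both the $A_\infty$-module structure on $N$ and the homomorphism $N \to \scrM_0(X^k)$ actually vanish. The key point is that directedness forces every multilinear operation landing in $\scrM(X^k)$ to factor through a finite, bounded-length tensor of morphism spaces among $X^1,\dots,X^{k-1},X^k$, so the obstruction theory is finite-dimensional and strictly terminates; and that $\psi$ being a chain-level quasi-isomorphism makes the mapping cone of $\psi$ acyclic, which is exactly what kills the obstructions. One could alternatively phrase the whole argument more slickly: the $A_\infty$-module $\scrM_0$ is built, via iterated mapping cones over the semi-orthogonal decomposition coming from directedness, from the modules $\scrM_0(X^k) \otimes \scrY^k$ (Yoneda); replacing each $\scrM_0(X^k)$ by the finite-dimensional $N^k$ and each cone attaching map by a homotopic one with finite-dimensional source produces a quasi-isomorphic module with all values finite-dimensional. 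I would likely present the hands-on inductive version, as it keeps the bookkeeping transparent and makes the role of directedness explicit.
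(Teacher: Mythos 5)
Your proposal takes a genuinely different route from the paper, and a more laborious one. The paper proceeds by \emph{descending} induction, choosing for each $k$ a finite-dimensional subcomplex $\scrM_1(X^k) \subset \scrM_0(X^k)$ quasi-isomorphic to it, and large enough to contain the images of the (finitely many) module operations coming from the previously chosen $\scrM_1(X^{k_d}), \ldots, \scrM_1(X^{k_1})$ with $k < k_1 < \cdots < k_d$. Since the module operations landing in $\scrM(X^k)$ can only involve values $\scrM(X^j)$ with $j > k$ (this is where directedness enters), going downward means these images are already finite-dimensional by the time one reaches $X^k$. The result is literally an $A_\infty$-\emph{submodule} of $\scrM_0$, and the inclusion is automatically a quasi-isomorphism; there is no morphism or module structure to construct, no obstruction theory, and no homotopy transfer.

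Your ascending induction can't use the submodule trick, which is why you are forced into the homotopy-transfer/obstruction-theory machinery: when you add $X^k$, operations emanating from $\scrM(X^k)$ into the lower $N^i$ must be \emph{built from scratch} and shown compatible, and your first reduction (``transport $\scrM_0$ along $\phi^{-1}$'') already invokes homotopy invariance of $A_\infty$-modules. That is defensible but not elementary, and you don't carry out the obstruction-theoretic step. Two smaller remarks: in your displayed $\mu^{d+1}_{\scrM_1}$, the target must be $\scrM_1(X^{i_0})[1-d] = N^{i_0}[1-d]$, not $N[1-d]$ (with $i_0 < i_d = k$, the output sits at the lowest index); and your ``slick'' iterated-cone reformulation essentially appeals to Lemma~\ref{th:module-vs-twisted}, which in the paper's logical order uses the present lemma as input, so one has to be careful not to argue in a circle. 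In short: the statement is true and your route can probably be completed, but the paper's descending-subcomplex argument buys both brevity and freedom from any $A_\infty$-homotopy-theoretic input, and you should notice that the choice of direction of induction is not a matter of taste here — it is what makes the elementary proof possible.
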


\begin{proof}
We will use the following basic fact about chain complexes (of vector spaces): if $D_0$ is a complex such that $H^*(D_0)$ is finite-dimensional, and $D \subset D_0$ an arbitrary finite-dimensional graded subspace, one can choose a finite-dimensional subcomplex $D_1$ which contains $D$, and such that the inclusion $D_1 \rightarrow D_0$ is a quasi-isomorphism.

This fact is relevant here for the following reason. Take $\scrM_0$ as in the statement. By descending induction on $k$, one can choose finite-dimensional subcomplexes $\scrM_1(X^k) \subset \scrM_0(X^k)$ such that the inclusion is a quasi-isomorphism, and such that $\scrM_1(X^k)$ contains the image of the (finitely many) operations
\begin{equation} \label{eq:module-maps}
\mu_{\scrM}^{d+1}: \scrM_1(X^{k_d}) \otimes \hom_{\scrA}(X^{k_{d-1}},X^{k_d}) \otimes \cdots \otimes \hom_{\scrA}(X^k,X^{k_1}) \longrightarrow \scrM_1(X^k)[1-d]
\end{equation}
with $d>0$ and $k < k_1 < \cdots < k_d$. By construction, the resulting $\scrM_1$ is an $A_\infty$-submodule quasi-isomorphic to $\scrM_0$.
\end{proof}

\begin{lemma} \label{th:module-vs-twisted}
Let $\scrA$ be a directed $A_\infty$-category, and $\scrM$ an $A_\infty$-module such that for each $k$, $\scrM(X^k)$ is finite-dimensional. Then there is a twisted complex whose image under \eqref{eq:tw-yoneda} is quasi-isomorphic to $\scrM$.
\end{lemma}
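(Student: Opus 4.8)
The plan is to construct the desired twisted complex explicitly, by induction on the number of objects, taking advantage of directedness. Concretely, write $\Ob(\scrA) = \{X^1,\dots,X^m\}$ with the given ordering, and set $W_k = \scrM(X^k)$, which by hypothesis is a finite-dimensional graded vector space. The candidate twisted complex will be
\[
C = \bigoplus_{k=1}^m W_k \otimes X^k,
\]
equipped with a differential $\delta_C \in \hom^1_{\scrA^\oplus}(C,C)$ whose components $\delta_C^{jk} \in \mathit{Hom}(W_k, W_j) \otimes \hom_{\scrA}(X^k, X^j)$ are read off from the module operations $\mu^{d+1}_{\scrM}$. Because $\scrA$ is directed, $\hom_{\scrA}(X^k,X^j) = 0$ for $k>j$ and equals $\bC e_{X^k}$ for $k=j$; one takes the $k=j$ component of $\delta_C$ to be $(-1)^{?}\mu^1_{\scrM}|_{W_k}$ tensored with $e_{X^k}$ (with the sign dictated by \eqref{eq:signed-differential}), and for $k<j$ one assembles the higher operations $\mu^{d+1}_{\scrM}$ applied to strings $(e,\dots,e)$ of identity morphisms — but more honestly, the cleanest description is: $\delta_C$ is the image of the element of $\hom^1(\bigoplus_k W_k\otimes X^k, \bigoplus_k W_k \otimes X^k)$ encoding $\scrM$ under the standard correspondence between $A_\infty$-modules over a directed category and twisted complexes over it.

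The key steps, in order, are as follows. First, verify that $\delta_C$ is strictly lower-triangular with respect to the filtration of the index set $\{1,\dots,m\}$ by $\{k : k \geq j\}$: this is immediate from directedness, since $\delta_C^{jk} = 0$ unless $k < j$ or $k = j$, and the $k=j$ part is a genuine differential on $W_k$ which we absorb into the additive structure (alternatively, one refines the filtration using a finite filtration of each $W_k$ compatible with $\mu^1_{\scrM}$, exactly as in the construction of $D\otimes C$ recalled in the text). Hence $\delta_C$ is a legitimate twisted-complex differential provided it satisfies the Maurer–Cartan equation \eqref{eq:maurer-cartan}; this second step is the heart of the matter, and it amounts to unwinding the $A_\infty$-module equations for $\scrM$ — the $A_\infty$-relations \eqref{eq:ainfty} for the module operations $\mu^{d+1}_{\scrM}$ are precisely what is needed, term by term, to see that $\sum_r \mu^r_{\scrA^\oplus}(\delta_C,\dots,\delta_C) = 0$. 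Third, identify the image of $(C,\delta_C)$ under \eqref{eq:tw-yoneda}: the module $X \mapsto \hom_{\Tw(\scrA)}(X, C)$ restricted to $\scrA$ has underlying complex $\bigoplus_k W_k \otimes \hom_{\scrA}(X, X^k)$ with differential twisted by $\delta_C$, and one writes down an explicit $A_\infty$-module morphism to (or from) $\scrM$ — on the object $X^i$ the leading term sends $\scrM(X^i) = W_i$ into $\bigoplus_k W_k \otimes \hom_\scrA(X^i,X^k)$ via the summand $k=i$, using $e_{X^i}$ — and checks it is a quasi-isomorphism. Directedness makes this last check tractable: on $X^m$ the two modules agree on the nose, and one descends inductively, the comparison on $X^i$ being a quasi-isomorphism because the "new" part of the twisted differential at stage $i$ only involves objects $X^k$ with $k>i$ for which the comparison is already known, so a filtration/mapping-cone argument (or a spectral sequence) gives the result.

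I expect the main obstacle to be the second step: matching the signs and the combinatorics in the Maurer–Cartan equation \eqref{eq:maurer-cartan} with the $A_\infty$-module relations for $\scrM$, keeping careful track of the sign twists \eqref{eq:signed-differential}, \eqref{eq:signed-multiplication} and the auxiliary signs in the $A_\infty$-structure on $\scrA^\oplus$. This is a bookkeeping exercise rather than a conceptual one — the correspondence between $A_\infty$-modules and twisted complexes over a directed (more generally, an exceptional-collection-type) category is well known — but it is the place where something could actually go wrong, and it is where essential use is made of the hypothesis that each $\scrM(X^k)$ is \emph{finite-dimensional} (so that $C$ is a genuine finite formal direct sum in the sense of \eqref{eq:formal-sum}) and of directedness (so that $\delta_C$ is automatically strictly upper-triangular and the sum defining it, and the sums in \eqref{eq:maurer-cartan}, are finite). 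Once the Maurer–Cartan equation is in hand, the quasi-isomorphism statement is comparatively routine given the inductive structure already exploited in Lemma \ref{th:proper-vs-strictly-proper}.
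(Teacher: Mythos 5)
Your proposed twisted complex $C = \bigoplus_k W_k \otimes X^k$ with $W_k = \scrM(X^k)$ cannot represent $\scrM$ under \eqref{eq:tw-yoneda}, and the obstruction is structural rather than a matter of sign bookkeeping. If $\scrC$ denotes the Yoneda module of $C$, then
$\scrC(X^i) = \hom_{\Tw(\scrA)}(X^i,C) = \bigoplus_{k \geq i} W_k \otimes \hom_{\scrA}(X^i,X^k)$,
which equals $W_i$ only at $i = m$; for $i < m$ the extra summands $\bigoplus_{k>i} \scrM(X^k)\otimes \hom_{\scrA}(X^i,X^k)$ appear, and no choice of $\delta_C$ makes them cohomologically disappear in general. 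A minimal counterexample: take $\scrA$ directed with two objects and $\hom_{\scrA}(X^1,X^2) = \bC$ in degree zero, and let $\scrM$ be the module with $\scrM(X^1)=0$, $\scrM(X^2)=\bC$. Your recipe yields $C = X^2$, whose Yoneda module has $\hom_{\scrA}(X^1,X^2)=\bC \neq 0 = \scrM(X^1)$. The correct twisted complex necessarily involves $X^1$ with nonzero multiplicity, even though $\scrM(X^1)=0$, so the assignment $W_k = \scrM(X^k)$ is wrong at the outset, not merely underspecified.

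A second, independent problem is that your claim to read $\delta_C$ off the module operations runs into a direction mismatch. A differential component $\delta_C^{jk} \in \mathit{Hom}(W_k,W_j)\otimes\hom_{\scrA}(X^k,X^j)$ requires $k \leq j$, so it would have to send $\scrM(X^k)$ to $\scrM(X^j)$ with $k<j$. The module operations $\mu^{d+1}_{\scrM}\colon \scrM(X_d)\otimes\hom_{\scrA}(X_{d-1},X_d)\otimes\cdots\otimes\hom_{\scrA}(X_0,X_1)\to\scrM(X_0)$ do the opposite: under directedness they run from the highest-indexed object down to the lowest. There is no natural map $\scrM(X^k)\to\scrM(X^j)$ for $k<j$ built from the module data, and the "standard correspondence" you appeal to is not of this naive form; invoking it at that step is circular, since that correspondence is precisely the content of the lemma being proved.

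The paper's actual argument is different and avoids both difficulties. It applies the iterated twist functors $T_{X^1}\cdots T_{X^m}$ (the module analogue of \eqref{eq:t-twisted}) to $\scrM$. Directedness forces the outcome to be acyclic, and the exact triangle \eqref{eq:c-triangle} then exhibits $\scrM$ as quasi-isomorphic to the Yoneda image of a bar-type twisted complex \eqref{eq:c}, whose summands $\scrM(X^{j_d})\otimes\hom_{\scrA}(X^{j_{d-1}},X^{j_d})\otimes\cdots\otimes X^{j}[d-1]$ over increasing chains are precisely what accounts for the "extra" $\hom$ factors that your naive complex was missing. Your closing inductive quasi-isomorphism check is along plausible lines, but it has to be applied to that larger complex, not to $\bigoplus_k \scrM(X^k)\otimes X^k$.
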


\begin{proof}
We will use the twist operation along $X^k$, from \cite[Section 5a]{seidel04}, which is an analogue of \eqref{eq:t-twisted} for $A_\infty$-modules. If $\scrM$ is an $A_\infty$-module, then its image under twisting, $\scrT = T_{X^k}(\scrM)$, is another $A_\infty$-module with
\begin{equation}
\begin{aligned}
& \scrT(X^j) = (\scrM(X^k) \otimes \hom_{\scrA}(X^j,X^k)) [1] \oplus \scrM(X^j),
\\
&
\label{eq:twist-mu1}
\mu^1_{\scrT}(m_1 \otimes a, m_0) = \big((-1)^{|a|-1} \mu^1_{\scrM}(m_1) \otimes a +
m_1 \otimes \mu^1_{\scrA}(a), \mu^1_{\scrM}(m_0) + \mu^2_{\scrM}(m_1,a)\big),
\end{aligned}
\end{equation}
and similar formulae for $\mu^{d+1}_{\scrT}$ \cite[Equation (5.1)]{seidel04}. By construction, this fits into an exact triangle \cite[Equation (5.3)]{seidel04}
\begin{equation} \label{eq:tw-exact-triangle}
\xymatrix{
\scrM(X^k) \otimes \scrX^k \ar[rr] && \scrM \ar[d] \\ 
&& \scrT = T_{X^k}(\scrM). \ar[ull]^-{[1]}
}
\end{equation}
The left corner of \eqref{eq:tw-exact-triangle} consists of the Yoneda module $\scrX^k$ associated to $X^k$, tensored with the finite-dimensional chain complex $\scrM(X^k)$. Equivalently, one can consider $\scrM(X^k) \otimes X^k$ itself as a twisted complex, and then $\scrM(X^k) \otimes \scrX^k$ is the image of that complex under \eqref{eq:tw-yoneda}.

By tracking the cohomologies of \eqref{eq:twist-mu1}, one sees that $T_{X^1} \cdots T_{X^m}(\scrM)$ is always acyclic, hence quasi-isomorphic to zero (compare \cite[Lemma 5.13]{seidel04}). On the other hand, one can generalize \eqref{eq:tw-exact-triangle} to a sequence of Dehn twists, and obtain an exact triangle 
\begin{equation} \label{eq:c-triangle}
\xymatrix{
\scrC \ar[rr] && \scrM \ar[d] \\
&& T_{X^1} \cdots T_{X^m}(\scrM).
\ar[ull]^-{[1]}
}
\end{equation}
Here,
\begin{equation} \label{eq:scrc}
\scrC(X^j) = \bigoplus\; \scrM(X^{j_d}) \otimes \hom_{\scrA}(X^{j_{d-1}},X^{j_d}) \otimes \cdots 
\otimes \hom_{\scrA}(X^{j},X^{j_1})[d-1], 
\end{equation}
where the (finite) sum is over all $d > 0$ and $j < j_1 < \cdots < j_d$. The differential is a form of the bar differential:
\begin{equation} \label{eq:c-mu1}
\begin{aligned}
\mu_{\scrC}^1(m \otimes a_d \otimes \cdots \otimes a_1) & =
\sum_i (-1)^{|a_1|+\cdots+|a_i|-i} \mu_{\scrM}^{d-i+1}(m,a_d,\dots,a_{i+1}) \otimes \cdots \otimes a_1 \\ & +
\sum_{i,j} (-1)^{|a_1|+|\cdots| + |a_i|-i} m \otimes \cdots \otimes \mu_{\scrA}^j(a_{i+j},\dots,a_{i+1}) \otimes \cdots \otimes a_1,
\end{aligned}
\end{equation}
and the higher $A_\infty$-module structure is defined in a way similar to the second term in \eqref{eq:c-mu1}. In fact, $\scrC$ is the Yoneda image of a twisted complex, of the form
\begin{equation} \label{eq:c}
C = \bigoplus\; \scrM(X^{j_d}) \otimes \hom_{\scrA}(X^{j_{d-1}},X^{j_d}) \otimes \cdots \otimes \hom_{\scrA}(X^{j_1},X^{j_2}) \otimes X^{j}[d-1].
\end{equation}
We omit the precise form of the differential on $C$, which can easily be inferred from \eqref{eq:c-mu1} (for an appearance of similar formulae elsewhere in the literature, see e.g.\ \cite[Equation (6.10)]{maydanskiy-seidel09}). Finally, since $T_{X^1}\cdots T_{X^m}(\scrM)$ is quasi-isomorphic to zero, the horizontal arrow in \eqref{eq:c-triangle} is necessarily a quasi-isomorphism.
\end{proof}

What is of interest to us is the combination of the previous two Lemmas, which shows that for a directed $A_\infty$-categories, $A_\infty$-modules (with a suitable properness condition) are essentially the same as twisted complexes:

\begin{proposition} \label{th:directed-module}
Let $\scrA$ be a directed $A_\infty$-category, and $\scrM$ an $A_\infty$-module over it such that $H^*(\scrM(X^k))$ is finite-dimensional for all $k$. Then there is a twisted complex whose image under the Yoneda embedding is quasi-isomorphic to $\scrM$.
\end{proposition}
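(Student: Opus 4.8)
The plan is to simply chain together the two preceding lemmas. Given $\scrM$ with $H^*(\scrM(X^k))$ finite-dimensional for each $k$, I would first invoke Lemma \ref{th:proper-vs-strictly-proper} to replace $\scrM$ by a quasi-isomorphic $A_\infty$-module $\scrM_1$ for which each $\scrM_1(X^k)$ is itself finite-dimensional (not merely of finite-dimensional cohomology). This step is where the directedness of $\scrA$ is used essentially: it provides the descending induction on $k$ that lets one cut down each $\scrM_0(X^k)$ to a finite-dimensional subcomplex compatible with the (finitely many, by directedness) module operations feeding into it.

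Next I would apply Lemma \ref{th:module-vs-twisted} to $\scrM_1$: since each $\scrM_1(X^k)$ is now finite-dimensional, that lemma yields a twisted complex $C$ (explicitly, the one built in \eqref{eq:c} from iterated twists along the $X^k$) whose image under the extended Yoneda embedding \eqref{eq:tw-yoneda} is quasi-isomorphic to $\scrM_1$. Composing the two quasi-isomorphisms gives that the Yoneda image of $C$ is quasi-isomorphic to $\scrM$, which is exactly the assertion. One small bookkeeping point I would address: Lemma \ref{th:module-vs-twisted} and the target of the present Proposition both phrase things in terms of the functor \eqref{eq:tw-yoneda}, i.e.\ the composite $\Tw(\scrA)\to\Mod(\Tw(\scrA))\to\Mod(\scrA)$, so there is nothing further to reconcile; I would just note that this is what is meant by ``image under the Yoneda embedding''.

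Honestly, there is no real obstacle here — the Proposition is a formal corollary, and the proof is two sentences. If I wanted to pad it, I could spell out that quasi-isomorphism of $A_\infty$-modules is an equivalence relation (transitivity of quasi-isomorphisms, which holds since $\Mod(\scrA)$ is a genuine $A_\infty$-category in which quasi-isomorphisms are the invertible morphisms at the cohomology level), but that is standard and I would relegate it to at most a parenthetical remark.

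\begin{proof}
Combine the two preceding lemmas. Starting from $\scrM$ with $H^*(\scrM(X^k))$ finite-dimensional for all $k$, Lemma \ref{th:proper-vs-strictly-proper} produces a quasi-isomorphic $A_\infty$-module $\scrM_1$ with $\scrM_1(X^k)$ finite-dimensional for all $k$. Lemma \ref{th:module-vs-twisted}, applied to $\scrM_1$, then produces a twisted complex whose image under \eqref{eq:tw-yoneda} is quasi-isomorphic to $\scrM_1$, hence to $\scrM$.
\end{proof}
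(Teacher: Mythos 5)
Your proof is correct and matches the paper exactly: the Proposition is stated there precisely as the combination of Lemma \ref{th:proper-vs-strictly-proper} and Lemma \ref{th:module-vs-twisted}, with no further argument needed. Nothing to add.
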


\subsection{Hochschild (co)homology}
This final batch of background material will only have minor importance for us, and therefore, the exposition will be particularly terse. The Hochschild homology $\mathit{HH}_*(\scrA,\scrA)$ and Hochschild cohomology $\mathit{HH}^*(\scrA,\scrA)$ of an $A_\infty$-category are obtained from chain complexes of the form
\begin{align} \label{eq:hh}
& \mathit{CC}_*(\scrA,\scrA) = \bigoplus \hom_{\scrA}(X_{d-1},X_d) \otimes \hom_{\scrA}(X_1,X_2) \otimes \hom_{\scrA}(X_0,X_1) \otimes \hom_{\scrA}(X_d,X_0)[d],
\\ \label{eq:hh-2}
& \mathit{CC}^*(\scrA,\scrA) = \prod \mathit{Hom}\big(\hom_{\scrA}(X_{d-1},X_d) \otimes \cdots \otimes \hom_{\scrA}(X_0,X_1), \hom_{\scrA}(X_0,X_d)\big)[d],
\end{align}
where the direct sum and product are over all $d \geq 0$ and collections of objects $(X_0,\dots,X_d)$. We will not write down the differentials in full, but we want to note the first couple of equations for the components $(\phi^0, \phi^1,\dots)$ of a degree $1$ Hochschild cocycle $\phi \in \mathit{CC}^1(\scrA,\scrA)$:
\begin{equation}
\begin{aligned}
& \mu^1_{\scrA}(\phi^0_X) = 0, \\
& \mu^2_{\scrA}(\phi^0_{X_1},a) + \mu^2_{\scrA}(a,\phi^0_{X_0}) + \mu^1_{\scrA}(\phi^1_{X_0,X_1}(a)) - \phi^1_{X_0,X_1}(\mu^1_{\scrA}(a)) = 0, \\
& \mu^2_{\scrA}(\phi^1_{X_1,X_2}(a_2),a_1) + \mu^2_{\scrA}(a_2,\phi^1_{X_0,X_1}(a_1)) - \phi^1_{X_0,X_2}(\mu^2_{\scrA}(a_2,a_1)) \\ & \qquad \qquad \qquad \qquad \qquad \qquad \qquad \qquad \quad + \text{terms involving $\mu^1_{\scrA}$} = 0,
\\ & \dots
\end{aligned}
\end{equation}
In particular, $\phi^0$ gives rise to a cohomology class 
\begin{equation}
[\phi^0_X] \in H^1(\hom_{\scrA}(X,X))
\end{equation}
for any $X$. Next, given two objects such that $\phi^0_{X_0} = 0$ and $\phi^0_{X_1} = 0$, we get an endomorphism of $H^*(\hom_{\scrA}(X_0,X_1))$, namely
\begin{equation}
[a] \longmapsto [\phi^1_{X_0,X_1}(a)],
\end{equation}
which is a derivation with respect to the product induced by \eqref{eq:signed-multiplication}.

Hochschild cohomology has the structure of a Gerstenhaber algebra, which consists of a graded commutative product on $\mathit{HH}^*(\scrA,\scrA)$, and a graded Lie bracket on $\mathit{HH}^*(\scrA,\scrA)[1]$. Hochschild homology is correspondingly both a module (over Hochschild cohomology as an algebra) and a Lie module (over shifted Hochschild cohomology as a Lie algebra). For instance, suppose that we have a cochain $\phi \in \mathit{CC}^1(\scrA,\scrA)$, whose only nonzero component is the linear part $\phi^1$. Then, its Lie action is simply the endomorphism
\begin{equation} \label{eq:lie-action}
\begin{aligned}
& 
\mathit{CC}_*(\scrA,\scrA) \longrightarrow \mathit{CC}_*(\scrA,\scrA), \\
&
a_d \otimes \cdots \otimes a_0 \longmapsto \sum_i a_d \otimes \cdots \otimes \phi^1(a_i) \otimes \cdots \otimes a_0.
\end{aligned}
\end{equation}
Additionally, Hochschild homology carries the Connes operator, an endomorphism of degree $-1$ (note that in our convention, the grading of $\mathit{HH}_*(\scrA,\scrA)$ is cohomological, in spite of the notation).

Suppose that $\scrA$ is proper. Then, one defines a weak Calabi-Yau structure on $\scrA$ of dimension $n \in \bZ$ to be a quasi-isomorphism of $A_\infty$-bimodules
\begin{equation} \label{eq:weak-cy}
\scrA \stackrel{\iso}{\longrightarrow} \scrA^\vee[-n]
\end{equation}
(compare \cite{tradler01}, which introduces a slightly more restrictive version). On the cohomology level, the weak Calabi-Yau structure induces nondegenerate pairings, which we denote by
\begin{equation} \label{eq:cy-pairings}
\begin{aligned}
& H^*(\hom_{\scrA}(X_1,X_0)) \otimes H^*(\hom_{\scrA}(X_0,X_1)) \longrightarrow \bC[-n], \\
& [a_1] \otimes [a_0] \longmapsto \langle [a_1], [a_0] \rangle_{\mathit{CY}}.
\end{aligned}
\end{equation}
These satisfy
\begin{equation}
\langle [a_2] \cdot [a_1], [a_0] \rangle_{\mathit{CY}} = 
\langle [a_2], [a_1] \cdot [a_0] \rangle_{\mathit{CY}} =
(-1)^{|a_2|(|a_1|+|a_0|)} \langle [a_1], [a_0] \cdot [a_2] \rangle_{\mathit{CY}}.
\end{equation}

A weak Calabi-Yau structure gives rise to an isomorphism 
\begin{equation}
\mathit{HH}^{*+n}(\scrA,\scrA) \iso \mathit{HH}_*(\scrA,\scrA)^\vee.
\end{equation}
The dual of the BV operator then carries over to an endomorphism of $\mathit{HH}^*(\scrA,\scrA)$ of degree $-1$, which we denote by $\Delta_{CY}$. Again, we omit the full formulae, and concentrate on the simplest piece. Suppose that we have cocycles $\phi \in \mathit{CC}^1(\scrA,\scrA)$ and $\psi \in \mathit{CC}^0(\scrA,\scrA)$, such that $[\psi] = \Delta_{CY} [\phi] \in \mathit{HH}^0(\scrA,\scrA)$. Then
\begin{equation} \label{eq:contraction}
\langle [e_X], [\phi^1_{X,X}(a)] \rangle_{\mathit{CY}} = \langle [\psi^0_X], [a] \rangle_{\mathit{CY}}
\end{equation}
for any object $X$ such that $\phi^0_X = 0$, and any $[a] \in H^n(\hom_{\scrA}(X,X))$.

\section{Algebraic notions\label{sec:tools}}

\subsection{Naive $\bC^*$-actions\label{subsec:naive-actions}}
Recall that a rational representation of $\bC^*$ is a representation which is a direct sum of finite-dimensional ones. Equivalently, these are the representations of the form 
\begin{equation} \label{eq:weight-sum}
W = \bigoplus_{i \in \bZ} W^i, 
\end{equation}
where $\bC^*$ acts on $W^i$ with weight $i$. The decomposition \eqref{eq:weight-sum} is canonical, so the structure of a rational $\bC^*$-representation is essentially the same as a grading on $W$, but the representation-theoretic language is more natural in our context. We denote by 
\begin{equation} \label{eq:langle}
W \longmapsto W\langle k \rangle
\end{equation}
the operation of tensoring a given representation with the one-dimensional weight $k$ representation. Equivalently, this consists of shifting the indexing of \eqref{eq:weight-sum} up by $k$.

\begin{definition} \label{th:naive-action}
An $A_\infty$-category with a naive $\bC^*$-action is an $A_\infty$-category $\scrA$ where each vector space $\hom_{\scrA}^k(X_0,X_1)$ is a rational representation of $\bC^*$, in such a way that the $\mu_{\scrA}^d$ are equivariant, and the strict identities are $\bC^*$-invariant. We denote by $\scrA^{\bC^*}$ the category with the same objects, but where one retains only the invariant part of the morphism spaces.
\end{definition}

A naive $\bC^*$-action gives rise to a cocycle $\mathit{def} \in \mathit{CC}^1(\scrA,\scrA)$, whose only nonzero term $\mathit{def}^1$ is the infinitesimal generator of the action (the endomorphism that multiplies the weight $i$ part of $\hom_{\scrA}(X_0,X_1)$ by $i$). Moreover, $\mathit{CC}_*(\scrA,\scrA)$ is a rational representation of $\bC^*$, and the infinitesimal generator of that is the Lie action of $\mathit{def}$, in the sense of \eqref{eq:lie-action}. On the level of cohomology, we therefore get a class $\mathit{Def} = [\mathit{def}] \in \mathit{HH}^1(\scrA,\scrA)$, whose Lie action on $\mathit{HH}_*(\scrA,\scrA)$ is the infinitesimal generator of a rational $\bC^*$-representation.

The previously mentioned Hochschild cocycle in $\scrA$ gives rise to one in $\Tw(\scrA)$, for which we use the same notation $\mathit{def}$. It has two nonzero components
\begin{equation}
\begin{aligned}
& \mathit{def}^0_C \in \hom^1_{\Tw(\scrA)}(C,C), \\
& \mathit{def}^1_{C_0,C_1}: \hom_{\Tw(\scrA)}(C_0,C_1) \longrightarrow \hom_{\Tw(\scrA)}(C_0,C_1).
\end{aligned}
\end{equation}
To describe these, start by introducing a $\bC^*$-action on $\hom_{\Tw(\scrA)}(C_0,C_1)$, by using the given action on the morphisms in $\scrA$ together with the trivial action on the $\mathit{Hom}$ spaces in \eqref{eq:formal-sum-morphism}. The associated infinitesimal action is $\mathit{def}^1_{C_0,C_1}$, and the remaining component $\mathit{def}^0_C$ is obtained by applying the infinitesimal action to $\delta_C$. We will be particularly interested in the constant components
\begin{equation} \label{eq:alg-def-class}
\mathit{Def}^0_C = [\mathit{def}^0_C] \in H^1(\hom_{\Tw(\scrA)}(C,C)).
\end{equation}

\subsection{Equivariant twisted complexes}
There is an equivariant version of $\Tw(\scrA)$ for an $A_\infty$-category $\scrA$ with a naive $\bC^*$-action, which is constructed as follows. When forming the equivariant analogue of the additive enlargement, one asks that the $W_f$ in \eqref{eq:formal-sum} should be finite-dimensional graded representations of $\bC^*$. The resulting $A_\infty$-category then inherits a naive $\bC^*$-action, given by taking the tensor product representation in \eqref{eq:formal-sum-morphism}. When introducing equivariant twisted complexes, one asks that the differential $\delta_C$ should lie in the $\bC^*$-invariant part of the endomorphism space. The outcome is another $A_\infty$-category with a naive $\bC^*$-action, denoted by $\EqTw(\scrA)$. It admits an operation of tensoring an object with a finite-dimensional chain complex of $\bC^*$-representations. In particular, one can tensor with one-dimensional representations as in \eqref{eq:langle}, and we use the same notation for it,
\begin{equation} \label{eq:l-twist}
C \longmapsto C\langle k \rangle.
\end{equation}
Equivariant twisted complexes also admit mapping cones with respect to $\bC^*$-invariant closed morphisms. The forgetful functor $\EqTw(\scrA) \rightarrow \Tw(\scrA)$ is full and faithful by definition, but by no means a quasi-equivalence, as the following observation shows:
%
%

\begin{lemma} \label{th:obstruction}
If a twisted complex $C$ is quasi-isomorphic to an equivariant twisted complex, the class \eqref{eq:alg-def-class} vanishes.
\end{lemma}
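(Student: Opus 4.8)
The plan is to show that the class $\mathit{Def}^0_C \in H^1(\hom_{\Tw(\scrA)}(C,C))$ is an invariant of the quasi-isomorphism type of $C$, and that it vanishes whenever $C$ lies in the image of the forgetful functor $\EqTw(\scrA) \to \Tw(\scrA)$. Granting both, the statement follows immediately.

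First I would treat the equivariant case. If $C$ underlies an equivariant twisted complex, then by definition the differential $\delta_C$ lies in the $\bC^*$-invariant part of $\hom^1_{\EqTw(\scrA)^\oplus}(C,C)$. But $\mathit{def}^0_C$ was defined precisely as the result of applying the infinitesimal generator of the $\bC^*$-action to $\delta_C$, and the infinitesimal generator annihilates invariant elements. Hence $\mathit{def}^0_C = 0$ as a chain, so certainly $\mathit{Def}^0_C = 0$.

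Next I would establish the quasi-isomorphism invariance. The cleanest way is to observe that $\mathit{Def}^0$ is globally the image, under the canonical map $\mathit{HH}^1(\scrA,\scrA) \to H^1(\hom_{\Tw(\scrA)}(C,C))$ coming from the action of Hochschild cohomology on objects of $\Tw(\scrA)$, of the fixed class $\mathit{Def} = [\mathit{def}] \in \mathit{HH}^1(\scrA,\scrA)$. Concretely, $\mathit{def}^0_C$ is the degree-$1$ endomorphism of $C$ built from $\mathit{def}$ and the differentials $\delta_C$ by the same formula that produces the action of a Hochschild cocycle on a twisted complex; and the action of $\mathit{HH}^*(\scrA,\scrA)$ on $H^*(\hom_{\Tw(\scrA)}(C,C))$ is functorial for morphisms in $\Tw(\scrA)$, in particular takes isomorphic objects to isomorphic images. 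Therefore, if $C \simeq C'$ in $\Tw(\scrA)$, the classes $\mathit{Def}^0_C$ and $\mathit{Def}^0_{C'}$ correspond under the induced isomorphism $H^1(\hom(C,C)) \iso H^1(\hom(C',C'))$. Combining this with the previous paragraph: if $C$ is quasi-isomorphic to some $C'$ underlying an equivariant twisted complex, then $\mathit{Def}^0_{C'} = 0$, hence $\mathit{Def}^0_C = 0$.

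The main obstacle is the bookkeeping needed to verify that $\mathit{def}^0_C$ really is the universal-Hochschild-class construction applied to $C$ — i.e.\ that the two nonzero components $(\mathit{def}^0_C, \mathit{def}^1_{C_0,C_1})$ described after \eqref{eq:alg-def-class} assemble into the value on $C$ of the map $\mathit{HH}^1(\scrA,\scrA) \to \mathit{HH}^1(\Tw(\scrA),\Tw(\scrA))$ (followed by evaluation at $C$), so that naturality under quasi-isomorphisms is available. Once that identification is made, everything else is formal. An alternative, more hands-on route that avoids invoking the Hochschild-cohomology machinery: given a quasi-isomorphism $f: C \to C'$ with $C'$ equivariant, one has $\mu^1_{\Tw(\scrA)}(f) = 0$; applying the derivation $\mathit{def}^1$ (the linear component) to the equation expressing closedness and using the $A_\infty$-module/cocycle relations for $\mathit{def}$, one gets $\mu^1(\mathit{def}^1(f)) = \mathit{def}^0_{C'} \circ f - (-1)^{\cdot}\, f \circ \mathit{def}^0_C$; since $\mathit{def}^0_{C'} = 0$ this exhibits $f \circ \mathit{def}^0_C$ as a coboundary, and because $f$ is invertible in cohomology, $\mathit{Def}^0_C = 0$.
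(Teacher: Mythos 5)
There is a genuine gap in the first step, the treatment of the equivariant case. You claim that because $\delta_C$ is $\bC^*$-invariant, applying the infinitesimal generator to it gives zero, so that $\mathit{def}^0_C = 0$ on the chain level. But this conflates two different $\bC^*$-actions. The component $\mathit{def}^1_{C,C}$ is, by the paper's definition, the infinitesimal generator of the action on $\hom_{\Tw(\scrA)}(C,C)$ which acts \emph{trivially} on the $\mathit{Hom}(W_{f_0},W_{f_1})$ factors in \eqref{eq:formal-sum-morphism}, and only non-trivially on the $\hom_{\scrA}$ factors. In contrast, for an equivariant twisted complex the $W_f$ themselves carry $\bC^*$-actions, and $\delta_C$ is invariant under the \emph{tensor product} action, which also acts on the $\mathit{Hom}(W_{f_0},W_{f_1})$ part. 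These two actions differ unless all $\xi_f$ (the infinitesimal generators on $W_f$) are zero. Consequently $\mathit{def}^0_C = \mathit{def}^1_{C,C}(\delta_C)$ is in general nonzero as a cochain; what is true (and what the paper proves) is that it equals the coboundary $\mu^1_{\Tw(\scrA)}(\xi_C)$, where $\xi_C = \bigoplus_f \xi_f \otimes e_{X_f} \in \hom^0_{\Tw(\scrA)}(C,C)$. The invariance of $\delta_C$ under the tensor product action is exactly the identity
\begin{equation*}
\mathit{def}^1_{C,C}(\delta_C) - \mu^2_{\scrA^\oplus}(\xi_C,\delta_C) - \mu^2_{\scrA^\oplus}(\delta_C,\xi_C) = 0,
\end{equation*}
which rewrites $\mathit{def}^0_C$ as exact rather than zero. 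So the conclusion $\mathit{Def}^0_C = 0$ is correct but your derivation of it is not.

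The second step, establishing quasi-isomorphism invariance, is essentially sound and agrees in substance with the paper. The paper packages this by noting that $\mathit{Def}^0_C$ is central, i.e.\ $\mathit{Def}^0_{C_1}\cdot[a] = (-1)^{|a|}[a]\cdot\mathit{Def}^0_{C_0}$ for all $[a]$, which is precisely the cohomology-level version of your identity $\mu^1(\mathit{def}^1(f)) = \mathit{def}^0_{C'}\circ f \pm f\circ\mathit{def}^0_C$ and immediately yields the invariance. Once the first step is repaired as above, your second step does complete the argument.
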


\begin{proof}
Because it comes from a Hochschild cohomology class, \eqref{eq:alg-def-class} is central, in the sense that
\begin{equation}
\mathit{Def}^0_{C_1} \cdot [a] = (-1)^{|a|} [a] \cdot \mathit{Def}^0_{C_0}
\quad \text{for } [a] \in H^*(\hom_{\Tw(\scrA)}(C_0,C_1)).
\end{equation}
In particular, it is a quasi-isomorphism invariant. 

Suppose that we have an equivariant twisted complex $C$, written as in \eqref{eq:formal-sum}. Take the infinitesimal $\bC^*$-action on each space $W_f$, denoted by $\xi_f$, and form
\begin{equation}
\xi_C = \bigoplus_f \xi_f \otimes e_{X_f} \in \hom_{\Tw(\scrA)}^0(C,C).
\end{equation}
Because $\delta_C$ is $\bC^*$-invariant, we have
\begin{equation}
\mathit{def}^1_{C,C}(\delta_C) - \mu^2_{\scrA^\oplus}(\xi_C,\delta_C) - \mu^2_{\scrA^\oplus}(\delta_C,\xi_C) = 0,
\end{equation}
which one can rewrite as $\mathit{def}^0_C = \mu^1_{\Tw(\scrA)}(\xi_C)$. Hence \eqref{eq:alg-def-class} vanishes in this case.
\end{proof}

Our principal interest is in the converse direction. The argument for that will use $A_\infty$-modules as a stepping-stone. Let $\scrA$ be an $A_\infty$-category with a naive $\bC^*$-action. An equivariant $A_\infty$-module over $\scrA$ (in the naive sense) assigns to $X \in \Ob(\scrA)$ a space $\scrM(X)$, each graded piece of which is a rational representation of $\bC^*$, together with structure maps as in \eqref{eq:module-maps}, which are $\bC^*$-equivariant. Given an equivariant twisted complex, its image under \eqref{eq:tw-yoneda} is naturally an equivariant module. Equivariant modules, together with the same morphisms as in the non-equivariant case, form an $A_\infty$-category $\EqMod(\scrA)$. It is important to note that the spaces
\begin{multline} \label{eq:module-hom}
\hom_{\EqMod(\scrA)}(\scrM_0,\scrM_1) = \prod \mathit{Hom}(\scrM_0(X_d) \otimes \hom_{\scrA}(X_{d-1},X_d) \otimes \cdots \\ \cdots \otimes \hom_{\scrA}(X_0,X_1), \scrM_1(X_0))[d]
\end{multline}
(where the product is over all $d \geq 0$ and objects $X_0,\dots,X_d$) carry induced $\bC^*$-action, which however are not necessarily rational representations. Hence, $\EqMod(\scrA)$ does not satisfy the conditions of Definition \ref{th:naive-action}. 
%

\begin{proposition} \label{th:equivariant-module}
Let $\scrA$ be a directed $A_\infty$-category with a naive $\bC^*$-action. Let $\scrM$ be an equivariant $A_\infty$-module such that $H^*(\scrM(X^k))$ is finite-dimensional for each $k$. Then there is an equivariant twisted complex whose image under \eqref{eq:tw-yoneda} is quasi-isomorphic to $\scrM$.
\end{proposition}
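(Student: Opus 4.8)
The plan is to rerun the proofs of Lemmas~\ref{th:proper-vs-strictly-proper} and~\ref{th:module-vs-twisted}, this time carrying the $\bC^*$-action along at every step, so that the conclusion of Proposition~\ref{th:directed-module} is upgraded to the equivariant category. The only point requiring a new idea is an equivariant refinement of the elementary fact about chain complexes used in the proof of Lemma~\ref{th:proper-vs-strictly-proper}.

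First I would prove the equivariant analogue of Lemma~\ref{th:proper-vs-strictly-proper}: $\scrM = \scrM_0$ admits an equivariant $A_\infty$-submodule $\scrM_1$, quasi-isomorphic to it, with each $\scrM_1(X^k)$ finite-dimensional (and therefore, as a subrepresentation of a rational representation, a legitimate choice for the spaces $W_f$ in~\eqref{eq:formal-sum}). Here I use that a rational $\bC^*$-representation splits canonically into weight spaces and that $\mu^1_{\scrM}$, being equivariant, preserves this splitting, so that $\scrM_0(X^k) = \bigoplus_i \scrM_0(X^k)^i$ as a complex and, by hypothesis, only finitely many weights $i$ occur in $H^*(\scrM_0(X^k))$. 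Collecting those weights together with the finitely many weights occurring in the finite-dimensional, $\bC^*$-invariant image of the module operations~\eqref{eq:module-maps} that must be absorbed at the given stage, applying the basic chain-complex fact of Lemma~\ref{th:proper-vs-strictly-proper} in each of these weights, and setting $\scrM_1(X^k)^i = 0$ in the remaining weights, produces a finite-dimensional $\bC^*$-invariant subcomplex $\scrM_1(X^k) \subset \scrM_0(X^k)$ with the required properties. The descending induction on $k$ then proceeds verbatim, equivariance of $\mu^{d+1}_{\scrM}$ guaranteeing that the subspace one must contain at each step is automatically a subrepresentation.

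With $\scrM$ now finite-dimensional-valued, I would repeat the proof of Lemma~\ref{th:module-vs-twisted}. The module twist $T_{X^k}$ and its $A_\infty$-structure~\eqref{eq:twist-mu1} are manifestly $\bC^*$-equivariant once $\hom_{\scrA}(X^j,X^k)$ carries its given action, so $T_{X^k}$ preserves equivariant modules and~\eqref{eq:tw-exact-triangle} becomes an exact triangle in $\EqMod(\scrA)$ whose left vertex is the image under~\eqref{eq:tw-yoneda} of the equivariant twisted complex $\scrM(X^k) \otimes X^k$. Iterating, $T_{X^1}\cdots T_{X^m}(\scrM)$ is acyclic — the cohomology bookkeeping of Lemma~\ref{th:module-vs-twisted} is blind to the group action — and it fits into the equivariant exact triangle~\eqref{eq:c-triangle}, in which $\scrC$ is the image under~\eqref{eq:tw-yoneda} of the twisted complex $C$ of~\eqref{eq:c}. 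Each coefficient space of $C$ is a tensor product of the finite-dimensional $\bC^*$-representations $\scrM(X^{j_d})$ and $\hom_{\scrA}(X^{j_{l-1}},X^{j_l})$; the formal sum is finite because the chains $j < j_1 < \cdots < j_d$ are bounded by $m$; and the differential, read off from~\eqref{eq:c-mu1}, is $\bC^*$-invariant because it is assembled from the equivariant operations $\mu_{\scrM}$ and $\mu_{\scrA}$. Hence $C$ is an equivariant twisted complex, and since the third vertex of~\eqref{eq:c-triangle} is acyclic the structure morphism $\scrC \to \scrM$ is an equivariant quasi-isomorphism, which is what we want.

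The one step demanding care is the equivariant strictification of Step~1: in the plain setting one freely enlarges a subcomplex by adjoining bounding cochains, but here the adjoined cochains must assemble into finite-dimensional subrepresentations, whereas a priori a complex of rational $\bC^*$-representations can be supported in infinitely many weights. It is precisely the finiteness of the total cohomology, exploited through the weight-space decomposition, that reconciles equivariance with finite-dimensionality; once that observation is in place, everything else is a routine transcription of the earlier arguments.
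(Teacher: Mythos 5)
Your proof follows exactly the paper's own strategy, which is stated there in two sentences: redo Lemma~\ref{th:proper-vs-strictly-proper} by treating the weight summands of \eqref{eq:weight-sum} one at a time, and then observe that the twist operation $T_{X^k}$ preserves equivariant modules, so that \eqref{eq:c-triangle} and the twisted complex \eqref{eq:c} are automatically equivariant. Your more detailed discussion of the weight-space bookkeeping in Step~1 correctly fills in what the paper's phrase ``simply by treating the summands one at a time'' is pointing at, and there is no gap.
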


This is the analogue of Proposition \ref{th:directed-module}, and the proof remains the same. There are only two noteworthy technical points: first, the property of chain complexes used in Lemma \ref{th:proper-vs-strictly-proper} also holds in the presence of a rational $\bC^*$-action (simply by treating the summands \eqref{eq:weight-sum} one at a time). Secondly, if $\scrM$ is an equivariant $A_\infty$-module, then so is $T_{X^k}\scrM$ for any $k$. Hence, if one starts with an equivariant $\scrM$, all the modules in \eqref{eq:c-triangle} will be equivariant, and so is the twisted complex \eqref{eq:c}. The desired converse to Lemma \ref{th:obstruction} is:

\begin{theorem} \label{th:rigid-lifting}
Let $\scrA$ be a directed $A_\infty$-category with a naive $\bC^*$-action. Let $C_1$ be a twisted complex such that \eqref{eq:alg-def-class} vanishes, and with $H^0(\hom_{\Tw(\scrA)}(C_1,C_1)) \iso \bC$. Then there is an equivariant twisted complex $C_0$ which is quasi-isomorphic to $C_1$.
\end{theorem}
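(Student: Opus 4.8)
The plan is to move the problem into the world of $A_\infty$-modules, where the construction of equivariant objects from \cite{seidel12} can be applied, and then to return to twisted complexes by means of Proposition \ref{th:equivariant-module}.

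First I would pass to the Yoneda picture. Let $\scrM$ be the $A_\infty$-module over $\scrA$ obtained by applying \eqref{eq:tw-yoneda} to $C_1$, so that $\scrM(X^k) = \hom_{\Tw(\scrA)}(X^k,C_1)$. Since $\scrA$ is directed, all morphism spaces of $\Tw(\scrA)$ are finite-dimensional; in particular each $\scrM(X^k)$ is finite-dimensional, which supplies the finiteness hypothesis needed later. The embedding \eqref{eq:tw-yoneda} is cohomologically full and faithful, so $H^0(\hom_{\Mod(\scrA)}(\scrM,\scrM)) \iso H^0(\hom_{\Tw(\scrA)}(C_1,C_1)) \iso \bC$. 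Moreover \eqref{eq:tw-yoneda} is compatible with the $\bC^*$-actions and with the Hochschild cocycles $\mathit{def}$, so the deformation class of $\scrM$, viewed as a module over the $\bC^*$-category $\scrA$, is the image of $\mathit{Def}^0_{C_1}$ from \eqref{eq:alg-def-class}, which vanishes by hypothesis.

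Next I would invoke the construction of equivariant $A_\infty$-modules from \cite{seidel12}. Its input is exactly this data: a naive $\bC^*$-action on $\scrA$, an $A_\infty$-module $\scrM$ with $H^*(\scrM(X^k))$ finite-dimensional, vanishing of the module-level deformation class, and $H^0(\hom_{\Mod(\scrA)}(\scrM,\scrM)) \iso \bC$; its output is an equivariant $A_\infty$-module $\scrM_0$ together with a quasi-isomorphism $\scrM_0 \simeq \scrM$ in $\Mod(\scrA)$. With $\scrM_0$ in hand, Proposition \ref{th:equivariant-module} applies (because $\scrA$ is directed and $H^*(\scrM_0(X^k)) \iso H^*(\scrM(X^k))$ is finite-dimensional) and produces an equivariant twisted complex $C_0$ whose image under \eqref{eq:tw-yoneda} is quasi-isomorphic to $\scrM_0$, hence to the Yoneda image of $C_1$. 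Full faithfulness of the Yoneda embedding on $\Tw(\scrA)$ then yields a quasi-isomorphism $C_0 \simeq C_1$, as required.

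The crux is the middle step, the production of $\scrM_0$; everything else is formal bookkeeping once Proposition \ref{th:equivariant-module} is available. Were that step not being quoted, the difficulty would be to upgrade the first-order information (vanishing of $\mathit{Def}^0_{C_1}$) to a genuine rational $\bC^*$-representation structure on the spaces $\scrM(X^k)$ with equivariant structure maps, via a term-by-term obstruction-theoretic induction. The precise role of the hypothesis $H^0(\hom(\scrM,\scrM)) \iso \bC$ is that the automorphism group of $\scrM$ is then just the scalars $\bC^*$, so the successive choices in the induction are pinned down up to these scalars and can be assembled into an honest (not merely formal) $\bC^*$-action; the residual $\bZ \iso \mathit{Hom}(\bC^*,\bC^*)$ of global normalizations corresponds exactly to the shifts $\langle k \rangle$ of \eqref{eq:l-twist}. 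This is also why the theorem is one about \emph{rigid} lifting.
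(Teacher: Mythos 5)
Your proposal follows essentially the same route as the paper: pass to the Yoneda module $\scrM$ over the directed $\scrA$, invoke the equivariance-construction machinery of \cite{seidel12} (the paper cites Lemmas 7.12, 8.2, 8.3 there, with the observation that the $H^1$-vanishing hypothesis can be weakened to the vanishing of the class \eqref{eq:alg-def-class}) to obtain a quasi-isomorphic equivariant module, and then come back to twisted complexes via Proposition \ref{th:equivariant-module}. The paper's proof is exactly this, and your explanatory gloss on the role of the $H^0 \iso \bC$ hypothesis (rigidity of the inductive choices, matching the $\bZ$-torsor of equivariant structures to the shifts $\langle k\rangle$) is consistent with, if more verbose than, what the quoted lemmas provide.
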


\begin{proof}
Consider the Yoneda image $\scrC_1$ of $C_1$. One can apply \cite[Lemma 7.12]{seidel12} to equip this with a weak $\bC^*$-action, in the sense of \cite[Definition 7.7]{seidel12}. Note that while \cite[Lemma 7.12]{seidel12} makes the assumption that
\begin{equation} \label{eq:h1}
H^1(\hom_{\Mod(\scrA)}(\scrC_1,\scrC_1)) = H^1(\hom_{\Tw(\scrA)}(C_1,C_1)) = 0, 
\end{equation}
the only thing needed in the proof is the vanishing of a certain cohomology class \cite[Equation (7-3)]{seidel12}, which is the image of \eqref{eq:alg-def-class} under the Yoneda embedding. Applying \cite[Lemma 8.3]{seidel12} upgrades the weak action to a homotopy action \cite[Definition 8.1]{seidel12}, and by \cite[Lemma 8.2]{seidel12} this implies the existence of a quasi-isomorphic module $\scrC_0$ which is equivariant in the sense considered in this paper. (The whole process is combined into one step in \cite[Corollary 8.4]{seidel12}, where the assumption \eqref{eq:h1} should be weakened as before.)

By construction, $H^*(\scrC_0(X^k)) \iso H^*(\scrC_1(X^k)) \iso H^*(\hom_{\Tw(\scrA)}(X^k,C_1))$ is finite-dimensional for each $k$. Hence, $\scrC_0$ is quasi-isomorphic to the Yoneda image of some equivariant twisted complex $C_0$, by Proposition \ref{th:equivariant-module}.
\end{proof}

\begin{remark}
The point of the detour via $A_\infty$-modules is that the operations \eqref{eq:module-maps} can be constructed order by order using a suitable obstruction theory, which is indeed the strategy used in \cite[Lemma 8.3]{seidel12}. The disadvantage of this is that equivariant $A_\infty$-modules are only well-behaved in certain cases; happily, directed $A_\infty$-categories are one of those cases.
\end{remark}

\subsection{Grothendieck groups}
This final piece of algebraic machinery serves as a point of transition to our applications. For an $A_\infty$-category $\scrA$, one defines $K_0(\scrA)$ to be the group generated by the quasi-isomorphism classes of twisted complexes, with the relation that $[C_0]-[C_1]+[C_2] = 0$ if there is exact triangle of the form \eqref{eq:triangle}. In fact, $K_0(\scrA)$ is already generated by the classes of the objects of $\scrA$ itself. Suppose now that $\scrA$ is proper; then, so is $\Tw(\scrA)$. Define the Mukai pairing between twisted complexes to be the Euler characteristic
\begin{equation}
C_0 \cdot C_1 = \chi(H^*(\hom_{\Tw(\scrA)}(C_0,C_1))) \in \bZ.
\end{equation}
This descends to a bilinear pairing on $K_0(\scrA)$, because an exact triangle induces long exact sequences of (cohomology level) morphism spaces. For instance, in the case where $\scrA$ is directed and has $m$ objects, the Mukai pairing between these objects is nondegenerate, which implies that $K_0(\scrA) \iso \bZ^m$.

We now introduce the equivariant analogues of these notions. Take an $A_\infty$-category $\scrA$ with a naive $\bC^*$-action. One defines the equivariant Grothendieck group $K_0^{\bC^*}(\scrA)$ to be the ordinary Grothendieck group of $\EqTw(\scrA)^{\bC^*}$ (recall that the superscript means that only invariant morphisms are allowed). $K_0^{\bC^*}(\scrA)$ is naturally a module over $\bZ[q,q^{-1}]$, with multiplication by $q^k$ corresponding to \eqref{eq:l-twist}. Moreover, over $\bZ[q,q^{-1}]$ it is generated by the classes of objects of $\scrA$ itself. If we suppose that $\scrA$ is proper, we can define the equivariant Mukai pairing to be 
\begin{equation} \label{eq:equivariant-mukai}
C_0 \cdot_q C_1 = \sum_k q^k \chi(H^*(\hom_{\EqTw(\scrA)^{\bC^*}}(C_0\langle k \rangle,C_1))) \in \bZ[q,q^{-1}].
\end{equation}
Equivalently, one can consider the space $H^*(\hom_{\EqTw(\scrA)}(C_0,C_1))$ with its induced $\bC^*$-action, and define $C_0 \cdot_q C_1$ as the Lefschetz trace of that action:
\begin{equation}
C_0 \cdot_q C_1 = \mathrm{Str}\big(q: H^*(\hom_{\EqTw(\scrA)}(C_0,C_1)) \longrightarrow
H^*(\hom_{\EqTw(\scrA)}(C_0,C_1)) \big).
\end{equation}
(In the application to symplectic topology, this will give rise to the two equivalent definitions, \eqref{eq:q-intersection-2} and \eqref{eq:q-intersection}, of $q$-intersection number). As before, \eqref{eq:equivariant-mukai} descends to a pairing on $K_0^{\bC^*}(\scrA)$, and this satisfies
\begin{equation}
(q^{k_0}c_0) \cdot_q (q^{k_1}c_1) = q^{k_1-k_0} (c_0 \cdot_q c_1).
\end{equation}
Here is a sample application: suppose that $C_0,C_1$ are equivariant twisted complexes, such that $\hom_{\Tw(\scrA)}(C_0,C_1)$ is finite-dimensional. Then the outcome of twisting \eqref{eq:t-twisted} is again an equivariant twisted complex, and the exact triangle it sits in belongs to $\EqTw(\scrA)^{\bC^*}$; hence,
\begin{equation} \label{eq:twisted-k-theory}
[T_{C_0}(C_1)] = [C_1] - ([C_0] \cdot_q [C_1])\, [C_0] \in K_0^{\bC^*}(\scrA).
\end{equation}
(One can generalize both the twisting process and \eqref{eq:twisted-k-theory} so that no assumption besides the properness of $\scrA$ is needed, but we will not use that.) As a consequence, we have for any $C_2$
\begin{equation} \label{eq:alg-q-picard-lefschetz}
C_2 \cdot_q T_{C_0}(C_1) = C_2 \cdot_q C_1 - (C_2 \cdot_q C_0)(C_0 \cdot_q C_1).
\end{equation}
(Later on, this will yield \eqref{eq:q-picard-lefschetz}, the Picard-Lefschetz formula for $q$-intersection numbers.) Finally, we would like to return to the case of a directed $\scrA$, now assuming that it carries a naive $\bC^*$-action. Define $A_q \in \mathit{Mat}(m \times m,\bZ[q,q^{-1}])$ by 
\begin{equation}
A_{q,ij} = X^i \cdot_q X^j.
\end{equation}
As in the non-equivariant case, this is invertible, which implies that
\begin{equation}
K_0^{\bC^*}(\scrA) \iso \bZ[q,q^{-1}]^m.
\end{equation}
If $\scrA$ is obtained as a directed subcategory of an $A_\infty$-category $\scrB$ with a naive $\bC^*$-action, one can introduce a corresponding matrix $B_q$ for the equivariant Mukai pairings in $\scrB$, and the two are then related by \eqref{eq:q-a-matrix}.

\section{Fukaya categories\label{sec:fukaya}}

\subsection{$\bC^*$-actions and $q$-intersection numbers\label{subsec:g-intersection-numbers}}
We now apply the general theory to Fukaya categories, which leads to the formalism described previously in Section \ref{subsec:dilating}. 

\begin{definition} \label{th:fuk-has-an-action}
Let $M^{2n}$ be a symplectic manifold as in Setup \ref{th:setup-symplectic}. A $\bC^*$-action on $\scrF(M)$ is given by the following data: a directed $A_\infty$-category $\scrC$ which has a naive $\bC^*$-action (Definition \ref{th:naive-action}), and a cohomologically full and faithful $A_\infty$-functor $\scrI: \scrF(M) \longrightarrow \Tw(\scrC)$.
\end{definition}

\begin{definition} \label{th:equivariant-l}
Suppose that $\scrF(M)$ has a $\bC^*$-action. By an equivariant structure on an exact Lagrangian brane $L$, we mean the choice of an isomorphism $C \rightarrow \scrI(L)$ in $H^0(\Tw(\scrC))$, where $C$ is an equivariant twisted complex. Two such structures are considered to be equivalent if there is a commutative diagram of isomorphisms
\begin{equation}
\xymatrix{
C_0 \ar[d] \ar[dr] \\ C_1 \ar[r] & \scrI(L) 
}
\end{equation}
where the $\downarrow$ lies in the $\bC^*$-invariant part of $H^0(\hom_{\EqTw(\scrC)}(C_0,C_1))$.
\end{definition}

The deformation class \eqref{eq:l-def-class} of $L$ is the element corresponding to \eqref{eq:alg-def-class} under the isomorphism 
\begin{equation}
H^1(L;\bC) \iso \mathit{HF}^1(L,L) = H^1(\hom_{\scrF(M)}(L,L)) \iso H^1(\hom_{\Tw(\scrC)}(\scrI(L),\scrI(L))).
\end{equation}
By Theorem \ref{th:rigid-lifting}, any $L$ for which $\mathit{Def}^0_L = 0$ can be made equivariant. Clearly, if $L_0,L_1$ have been made equivariant, one gets a $\bC^*$-action on 
\begin{equation} \label{eq:l-c}
\mathit{HF}^*(L_0,L_1) \iso H^*(\hom_{\EqTw(\scrC)}(C_0,C_1)).
\end{equation}
Consider the case where $L_0 = L_1 = L$, made equivariant in two ways. The degree zero part of \eqref{eq:l-c} is one-dimensional, and if the action on it has weight $k$, then $C_0 \langle k \rangle $ is equivalent to $C_1$. Hence, equivariant structures up to equivalence form an affine space over $\bZ$. In a slight abuse of notation, we write \eqref{eq:equi-shift} for the change of equivariant structure. Property \eqref{eq:shift-equivariance} is obvious from the definitions. In particular, given any $L$ with $\mathit{Def}^0_L = 0$, one has a unique $\bC^*$-action on $H^*(L;\bC)$. This justifies the following:

\begin{definition} \label{th:dilating-action}
We say that the $\bC^*$-action on $\scrF(M)$ is dilating if, for any exact Lagrangian brane such that $\mathit{Def}_L^0 = 0$, the $\bC^*$-action on $\mathit{HF}^n(L,L) \iso H^n(L;\bC) \iso \bC$ has weight $1$.
\end{definition}

%

\begin{example} \label{th:cylinder-2}
We spell out the details of Example \ref{th:cylinder}. Any exact Lagrangian submanifold $L \subset M = [-1,1] \times S^1$ is Hamiltonian isotopic to $\{0\} \times S^1$. There are two choices of {\em Spin} structures, giving rise to two objects $L_0$, $L_1$ which are orthogonal: $\mathit{HF}^*(L_0,L_1) = 0$. The endomorphism ring of each object is an exterior algebra in one variable, which is intrinsically formal (it does not carry any nontrivial $A_\infty$-structures). 

Consider the Kronecker quiver $\scrC$, which is the unique directed $A_\infty$-category with two objects $(X^1,X^2)$, and $\mathit{hom}_{\scrC}(X^1,X^2) = W$ a two-dimensional vector space concentrated in degree $0$ (this completely determines the $A_\infty$-structure). Take the twisted complex $C = \mathit{Cone}(w: X^1 \rightarrow X^2)$, for some nonzero $w \in W$. Then
\begin{equation} \label{eq:endo-compute}
H^*(\hom_{\Tw(\scrC)}(C,C)) \iso \begin{cases} \bC \, [e_C] & \ast = 0, \\
W/(\bC \, w) & \ast = 1, \\
0 & \text{in all other degrees}
\end{cases} 
\end{equation}
is an exterior algebra in one variable. If one takes two disjoint copies of the Kronecker quiver (which can be considered as a directed $A_\infty$-category with four objects), its category of twisted complexes will therefore contain a full subcategory quasi-isomorphic to $\scrF(M)$.

Now let's turn $W$ into a representation of $\bC^*$, by letting it act trivially on our chosen $w$, and with weight one on a complementary one-dimensional subspace. $C$ is an equivariant twisted complex, and one sees from \eqref{eq:endo-compute} that the induced action on $H^1(\hom_{\Tw(\scrC)}(C,C))$ has weight $1$. Hence, one can use the previously mentioned embedding to equip $\scrF(M)$ with a dilating $\bC^*$-action.
\end{example}

Definition \ref{th:dilating-action} is formulated as a separate condition for each brane, which might seem to make it hard to check (except in very simple cases, such as Example \ref{th:cylinder-2}, where the branes can be classified explicitly). However, that impression is misleading, as the following shows:

\begin{lemma} \label{th:dilating-generators}
Suppose that $\scrF(M)$ has a $\bC^*$-action. Suppose also that there is a set $\{L^k\}$ of exact Lagrangian branes with the following properties:
\begin{itemize} \itemsep0.5em
\item Each $L^k$ can be made equivariant, and the $\bC^*$-action on $\mathit{HF}^n(L^k,L^k) \iso H^n(L^k;\bC) \iso \bC$ has weight $1$.
\item For any exact Lagrangian brane $L \subset M$, there is a $k$ such that $\mathit{HF}^*(L,L^k) \neq 0$.
\end{itemize}
Then the $\bC^*$-action is dilating.
\end{lemma}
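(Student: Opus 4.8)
The plan is to deduce the dilating property for an arbitrary exact Lagrangian brane $L$ with $\mathit{Def}^0_L = 0$ from the two hypotheses, by a Poincar\'e duality argument on Floer cohomology. First I would fix such an $L$, equip it with an equivariant structure, and let $d \in \bZ$ denote the weight of the one-dimensional $\bC^*$-representation $\mathit{HF}^n(L,L) \iso \bC$; by \eqref{eq:shift-equivariance} this weight is independent of the chosen equivariant structure, and the goal is to show $d = 1$. Using the second hypothesis, choose an index $k$ with $\mathit{HF}^*(L,L^k) \neq 0$, set $L' = L^k$, and equip $L'$ with an equivariant structure; by the first hypothesis the weight of the one-dimensional representation $\mathit{HF}^n(L',L') \iso \bC$ is then $1$.

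The duality argument relies on two facts that hold once branes are made equivariant: the Floer groups between them are rational $\bC^*$-representations of finite total dimension (they are computed from the morphism spaces of $\EqTw(\scrC)$, which are finite direct sums of tensor products of rational representations), and the triangle products among such Floer groups are $\bC^*$-equivariant. Combining equivariance of the product with the nondegeneracy of the pairing \eqref{eq:two-sided-product} applied to the pair $(L_0,L_1) = (L,L')$ yields an isomorphism of $\bC^*$-representations
\[
\mathit{HF}^j(L,L') \;\iso\; \mathit{HF}^{n-j}(L',L)^\vee \otimes \mathit{HF}^n(L,L) \;\iso\; \mathit{HF}^{n-j}(L',L)^\vee \langle d \rangle ,
\]
while applying \eqref{eq:two-sided-product} to the pair $(L_0,L_1) = (L',L)$ gives $\mathit{HF}^{n-j}(L',L) \iso \mathit{HF}^j(L,L')^\vee \langle 1 \rangle$. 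Substituting the second isomorphism into the first, and using that $(\,\cdot\,)^\vee$ is an involution that carries $\langle 1 \rangle$ to $\langle -1 \rangle$, one gets $\mathit{HF}^j(L,L') \iso \mathit{HF}^j(L,L') \langle d-1 \rangle$ for every $j$. Taking $j$ with $\mathit{HF}^j(L,L') \neq 0$ --- which exists by the second hypothesis --- one concludes that a nonzero finite-dimensional graded vector space is isomorphic to its own shift by $d-1$, and inspecting its extreme nonzero weight space (largest weight if $d > 1$, smallest if $d < 1$) forces $d-1 = 0$.

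The step I would flag as the point requiring care is a conceptual rather than computational one: one must not invoke the third of the three properties of the $\bC^*$-action listed in Section \ref{subsec:dilating} --- that for equivariant $L$ the action on $\mathit{HF}^n(L,L)$ has weight $1$ --- since that is precisely the dilating condition under proof. Only the \emph{equivariance} of the triangle products, together with the nondegeneracy and symmetry of \eqref{eq:two-sided-product}, may be used, and these are purely Floer-theoretic facts that do not presuppose dilation. A minor secondary point is to check that the relevant Floer groups genuinely split into finitely many $\bC^*$-weight spaces, so that the ``extreme weight'' step is legitimate; this is immediate from the description of the morphism spaces of $\EqTw(\scrC)$.
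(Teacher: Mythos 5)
Your proof is correct and follows essentially the same route as the paper: fix $L$ with unknown weight $d$ on $\mathit{HF}^n(L,L)$, choose $L^k$ with $\mathit{HF}^*(L,L^k)\neq 0$, and apply the $\bC^*$-equivariance of the pairing \eqref{eq:two-sided-product} to both pairs $(L,L^k)$ and $(L^k,L)$ to force $d=1$. The paper phrases the duality step a bit more directly --- exhibiting $\mathit{HF}^*(L,L^k)$ as $\mathit{HF}^{n-*}(L^k,L)^\vee$ tensored with two different one-dimensional representations (of weight $1$ and weight $d$ respectively) and comparing --- whereas you compose the two dualities to get $\mathit{HF}^j(L,L')\iso\mathit{HF}^j(L,L')\langle d-1\rangle$; these are the same argument written in slightly different bookkeeping, and your caution about not smuggling in the dilating property itself is exactly the right self-check.
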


\begin{proof}
From the definition, it is clear that the $\bC^*$-action on \eqref{eq:l-c} is compatible with composition in the Fukaya category. Hence, using the pairings \eqref{eq:two-sided-product} it follows that we have isomorphisms of $\bC^*$-representations
\begin{equation}
\mathit{HF}^{n-*}(L^k,L)^\vee \otimes \mathit{HF}^n(L^k,L^k) \iso \mathit{HF}^*(L,L^k) \iso
\mathit{HF}^{n-*}(L^k,L)^\vee \otimes \mathit{HF}^n(L,L).
\end{equation}
After choosing $k$ so that $\mathit{HF}^*(L,L^k) \neq 0$, it follows that the $\bC^*$-actions on $\mathit{HF}^n(L^k,L^k)$ and $\mathit{HF}^n(L,L)$ must have the same weight.
\end{proof}

The only property stated in Section \ref{subsec:dilating} which remains to be explained is \eqref{eq:q-picard-lefschetz}. Consider the setup of that equation, and where the $\scrI(L_k)$ are quasi-isomorphic to equivariant twisted complexes $C_k$. Recall \cite[Corollary 17.17]{seidel04} that there is a quasi-isomorphism 
\begin{equation} \label{eq:twist-twist}
T_{C_0}(C_1) \htp \scrI(\tau_{L_0}(L_1)).
\end{equation}
Since $T_{C_0}(C_1)$ is an equivariant twisted complex, one can use \eqref{eq:twist-twist} as an equivariant structure for $\tau_{L_0}(L_1)$. Then, \eqref{eq:alg-q-picard-lefschetz} implies \eqref{eq:q-picard-lefschetz}. As for \eqref{eq:q-picard-lefschetz-inverse}, one can either derive it from \eqref{eq:q-picard-lefschetz}, or give an independent but parallel argument for it.

\subsection{Discussion}
We add some considerations which, while less important for immediate applications, will help to clarify the meaning of Definitions \ref{th:fuk-has-an-action} and \ref{th:dilating-action}. Recall that the naive $\bC^*$-action on $\scrC$ gives rise to a Hochschild cohomology class for $\Tw(\scrC)$. Restrict this to the image of the embedding $\scrI$, and pull it back to the Fukaya category. We still denote the outcome by 
\begin{equation} \label{eq:def-fukaya}
\mathit{Def} \in \mathit{HH}^1(\scrF(M),\scrF(M)).
\end{equation} 

\begin{lemma} \label{th:exponentiating}
Suppose that $\scrF(M)$ admits a $\bC^*$-action, and is split-generated by Lagrangian branes which can be made equivariant. Then, the Lie action of \eqref{eq:def-fukaya} on $\mathit{HH}_*(\scrF(M),\scrF(M))$ is the infinitesimal generator of a rational representation of $\bC^*$ (which means, Hochschild homology decomposes into eigenspaces with integer eigenvalues).
\end{lemma}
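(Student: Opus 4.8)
The plan is to replace $\scrF(M)$ by an auxiliary $A_\infty$-category that carries a genuine naive $\bC^*$-action, for which the assertion is already recorded in Section \ref{subsec:naive-actions}, and then to propagate the conclusion along Morita equivalences. Fix the split-generating set $\{L^k\}$, and for each $k$ choose an equivariant twisted complex $C_k \in \Ob(\EqTw(\scrC))$ together with a quasi-isomorphism $C_k \htp \scrI(L^k)$ in $\Tw(\scrC)$; this is possible because each $L^k$ can be made equivariant. Let $\scrB \subset \EqTw(\scrC)$ be the full $A_\infty$-subcategory on the objects $\{C_k\}$. Since the multiplicity spaces occurring in twisted complexes over the finitely many objects of $\scrC$ are finite-dimensional representations of $\bC^*$, all morphism spaces of $\scrB$ are finite-dimensional rational $\bC^*$-representations, the higher products are equivariant, and the strict units are invariant; hence $\scrB$ is a small $A_\infty$-category with a naive $\bC^*$-action in the sense of Definition \ref{th:naive-action}. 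By the discussion following that definition, the Lie action of $\mathit{Def}_\scrB = [\mathit{def}_\scrB] \in \mathit{HH}^1(\scrB,\scrB)$ on $\mathit{HH}_*(\scrB,\scrB)$ is the infinitesimal generator of a rational $\bC^*$-representation.

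Next I would identify $\scrB$, up to quasi-equivalence, with a subcategory of $\scrF(M)$. Let $\scrB' \subset \scrF(M)$ be the full subcategory on $\{L^k\}$ and $\widetilde{\scrB} \subset \Tw(\scrC)$ the full subcategory on $\{\scrI(L^k)\}$. Since $\scrI$ is cohomologically full and faithful, it restricts to a quasi-equivalence $\scrB' \htp \widetilde{\scrB}$, which carries the restriction of \eqref{eq:def-fukaya} to the restriction to $\widetilde{\scrB}$ of the Hochschild cocycle $\mathit{def}$ of $\Tw(\scrC)$. The chosen quasi-isomorphisms $C_k \htp \scrI(L^k)$ give a quasi-equivalence between $\widetilde{\scrB}$ and the full subcategory of $\Tw(\scrC)$ on $\{C_k\}$, and the latter is identified with the underlying $A_\infty$-category of $\scrB$ because the forgetful functor $\EqTw(\scrC) \to \Tw(\scrC)$ is full and faithful. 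Under these identifications the $\Tw(\scrC)$-cocycle $\mathit{def}$ and the naive-action cocycle $\mathit{def}_\scrB$ define the same class $\mathit{Def}_\scrB$ in $\mathit{HH}^1(\scrB,\scrB)$: their difference at the cochain level is the Hochschild coboundary of an inner cochain of the kind appearing in the proof of Lemma \ref{th:obstruction}. As a quasi-equivalence induces isomorphisms on Hochschild homology and cohomology that are compatible with the module and Lie-module structures, it follows that the Lie action of $\mathit{Def}|_{\scrB'}$ on $\mathit{HH}_*(\scrB',\scrB')$ is the infinitesimal generator of a rational $\bC^*$-representation.

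Finally, because $\{L^k\}$ split-generates $\scrF(M)$, the inclusion $\scrB' \hookrightarrow \scrF(M)$ becomes a quasi-equivalence after passing to split-closed categories of twisted complexes, and therefore induces isomorphisms $\mathit{HH}_*(\scrB',\scrB') \iso \mathit{HH}_*(\scrF(M),\scrF(M))$ and $\mathit{HH}^*(\scrB',\scrB') \iso \mathit{HH}^*(\scrF(M),\scrF(M))$ compatible with the Lie-module structure, under which \eqref{eq:def-fukaya} restricts to $\mathit{Def}|_{\scrB'}$. Transporting the previous paragraph's conclusion shows that the Lie action of \eqref{eq:def-fukaya} on $\mathit{HH}_*(\scrF(M),\scrF(M))$ is the infinitesimal generator of a rational $\bC^*$-representation; equivalently, $\mathit{HH}_*(\scrF(M),\scrF(M))$ decomposes into eigenspaces with integer eigenvalues.

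The step I expect to be the main obstacle is the comparison of Hochschild classes in the middle paragraph: one must check that the restriction of $\mathit{Def}$ to $\scrB'$ corresponds, across the chain of functors $\scrB \subset \EqTw(\scrC) \to \Tw(\scrC) \hookleftarrow \widetilde{\scrB} \xleftarrow{\scrI} \scrB'$, to the $\mathit{def}$-cocycle of the naive $\bC^*$-action on $\scrB$. This rests on the naturality of the $\mathit{def}$-construction under $A_\infty$-functors and under the forgetful functor $\EqTw(\scrC) \to \Tw(\scrC)$, together with the observation (already implicit in the proof of Lemma \ref{th:obstruction}) that changing between the two descriptions only alters the cocycle by an inner term. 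The remaining ingredients — finite-dimensionality and rationality of the morphism and Hochschild chain spaces of $\scrB$, and Morita invariance of Hochschild homology together with its module structure over Hochschild cohomology — are standard once this identification is in place.
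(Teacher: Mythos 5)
Your proof is correct and takes essentially the same route as the paper's: replace $\scrF(M)$ by a split-generating full subcategory of equivariant branes, identify it (up to quasi-equivalence) with a full subcategory of $\EqTw(\scrC)$ where the naive $\bC^*$-action makes the conclusion immediate, and transport the result back via Morita invariance of $\mathit{HH}_*$ and $\mathit{HH}^*$. You are considerably more explicit than the paper about the identification of the two Hochschild $1$-cocycles (the naive-action generator on $\scrB$ versus the restriction of $\mathit{def}$ from $\Tw(\scrC)$), and that comparison via the inner cochain $\xi_C$ from the proof of Lemma \ref{th:obstruction} is exactly the right thing to say; the paper leaves it implicit.
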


\begin{proof}
Without affecting Hochschild homology or cohomology, we can replace $\scrF(M)$ by the full subcategory of Lagrangian branes that can be made equivariant. That subcategory is quasi-equivalent to a full subcategory of $\EqTw(\scrC)$. But for that subcategory, the corresponding result is obvious, as noted previously.
\end{proof}

The Fukaya category has a preferred weak Calabi-Yau structure, whose associated pairings \eqref{eq:cy-pairings} are those from \eqref{eq:two-sided-product}. Consider the resulting operator $\Delta_{\mathit{CY}}$ on $\mathit{HH}^*(\scrF(M),\scrF(M))$.

\begin{lemma} \label{th:dilations}
Suppose that $\scrF(M)$ admits a $\bC^*$-action, such that the class \eqref{eq:def-fukaya} satisfies
\begin{equation} \label{eq:quasi}
\Delta_{\mathit{CY}}(\mathit{Def} \cdot U) = U,
\end{equation}
for some $U \in \mathit{HH}^0(\scrF(M),\scrF(M))$ which is invertible (with respect to the commutative ring structure). Then that action is dilating.
\end{lemma}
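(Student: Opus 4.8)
The plan is to check Definition~\ref{th:dilating-action} directly. Fix an exact Lagrangian brane $L \subset M$ with $\mathit{Def}^0_L = 0$, so that $L$ can be made equivariant; by the discussion preceding Definition~\ref{th:dilating-action}, the resulting $\bC^*$-action on $\mathit{HF}^*(L,L) \iso H^*(L;\bC)$ does not depend on the chosen equivariant structure, and I want to show that its weight $w$ on the one-dimensional space $\mathit{HF}^n(L,L)$ equals $1$. The first observation is that, by construction of \eqref{eq:def-fukaya} and the description of naive $\bC^*$-actions in Section~\ref{subsec:naive-actions}, the derivation of $H^*(\hom_{\scrF(M)}(L,L))$ induced by the class $\mathit{Def}$ is exactly the infinitesimal generator of this $\bC^*$-action: it annihilates $[e_L] \in H^0(\hom_{\scrF(M)}(L,L)) = \bC\,[e_L]$, and it acts on $H^n(\hom_{\scrF(M)}(L,L))$ by multiplication by $w$.

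Next I would feed the hypothesis \eqref{eq:quasi} into the contraction formula \eqref{eq:contraction}. Pick a Hochschild cocycle $\phi \in \mathit{CC}^1(\scrF(M),\scrF(M))$ representing $\mathit{Def} \cdot U$. Since evaluation at $L$, $\mathit{HH}^*(\scrF(M),\scrF(M)) \to H^*(\hom_{\scrF(M)}(L,L))$, is a ring homomorphism, the cohomology class of $\phi^0_L$ equals $\mathit{Def}^0_L \cdot [U^0_L] = 0$; hence we may choose $\phi$ with $\phi^0_L = 0$. Pick also $\psi \in \mathit{CC}^0(\scrF(M),\scrF(M))$ representing $U$; invertibility of $U$ forces its image $[\psi^0_L] \in H^0(\hom_{\scrF(M)}(L,L)) = \bC\,[e_L]$ to be $u_L\,[e_L]$ with $u_L \in \bC^{\times}$. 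By \eqref{eq:quasi}, $[\psi] = \Delta_{\mathit{CY}}([\phi])$, so \eqref{eq:contraction} applies with this pair and $X = L$: for all $[a] \in H^n(\hom_{\scrF(M)}(L,L))$,
\[
\langle [e_L],\, [\phi^1_{L,L}(a)] \rangle_{\mathit{CY}} = \langle [\psi^0_L],\, [a] \rangle_{\mathit{CY}} = u_L\,\langle [e_L],\, [a] \rangle_{\mathit{CY}}.
\]

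It remains to evaluate the left-hand side. Because $U \in \mathit{HH}^0(\scrF(M),\scrF(M))$ is central and acts on $H^*(\hom_{\scrF(M)}(L,L))$ by the scalar $u_L$, the endomorphism $[a] \mapsto [\phi^1_{L,L}(a)]$ (which, since $\phi^0_L = 0$, is the $\mathit{HH}^*$-module action of $\mathit{Def}\cdot U$) is $u_L$ times the derivation induced by $\mathit{Def}$; there is no Leibniz correction on top-degree classes since $\mathit{Def}$ kills $[e_L]$. Hence $[\phi^1_{L,L}(a)] = u_L\,w\,[a]$ for $[a] \in H^n(\hom_{\scrF(M)}(L,L))$, and the displayed identity becomes $u_L\,w\,\langle [e_L],[a] \rangle_{\mathit{CY}} = u_L\,\langle [e_L],[a] \rangle_{\mathit{CY}}$. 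Finally, the weak Calabi--Yau pairing restricts to a nondegenerate pairing between $H^0(\hom_{\scrF(M)}(L,L)) = \bC\,[e_L]$ and $H^n(\hom_{\scrF(M)}(L,L)) \iso \bC$, so $[a]$ can be chosen with $\langle [e_L],[a] \rangle_{\mathit{CY}} \neq 0$; as $u_L \neq 0$ we conclude $w = 1$, which is the claim.

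The step I expect to be the crux is the evaluation of $[\phi^1_{L,L}(a)]$: one must make precise that the endomorphism of $H^*(\hom_{\scrF(M)}(L,L))$ induced by the Hochschild class $\mathit{Def} \cdot U$ is ``multiplication by $u_L$ composed with the infinitesimal $\bC^*$-action'', i.e.\ unwind enough of the cup product on Hochschild cochains --- equivalently, the compatibility of the $\mathit{HH}^*$-module structure on $H^*(\hom_{\scrF(M)}(L,L))$ with its ring structure and with the evaluation of $\mathit{HH}^0$ as central scalars --- to see that, on the top-degree line $H^n(\hom_{\scrF(M)}(L,L))$, the only surviving contribution is this scalar multiple of the derivation. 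A minor point to get right along the way is that $\phi$ may be chosen with $\phi^0_L = 0$, which as indicated follows from $\mathit{Def}^0_L = 0$ together with the multiplicativity of evaluation at $L$.
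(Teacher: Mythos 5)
Your argument follows the same outline as the paper's: represent $\mathit{Def}\cdot U$ and $U$ by cocycles $(\phi,\psi)$ with $\phi^0=0$ at the object of interest, invoke \eqref{eq:contraction}, then use the invertibility of $U$ and the nondegeneracy of the Calabi--Yau pairing in top degree to force the derivation induced by $\mathit{Def}$ to have eigenvalue $1$ on $\mathit{HF}^n(L,L)$. The difference is \emph{where} you carry this out. You stay in $\scrF(M)$, where $\mathit{Def}$ is only represented by the pulled-back cocycle, and the identity you yourself flag as the crux --- that $[a]\mapsto[\phi^1_{L,L}(a)]$ equals $u_L$ times the infinitesimal action --- must be extracted from general cup-product compatibilities; you are right that this needs to be made precise. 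The paper instead restricts to the full subcategory $\scrT\subset\EqTw(\scrC)$ quasi-equivalent to the equivariant branes, where the naive cocycle $\mathit{def}$ has only its $\mathit{def}^1$ component and no constant part; there the relation $[(\mathit{def}\cdot u)^1_{C,C}(a)] = [\mathit{def}^1_{C,C}(a)]\cdot[u^0_C]$ and the application of \eqref{eq:contraction} become explicit cochain-level manipulations, and the conclusion follows by moving $[u^0_C]$ across the pairing via cyclic symmetry. So the paper's detour through $\scrT$ is exactly what buys the control you need for the step you identify as the gap.

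Two smaller points. Your parenthetical claiming that $[a]\mapsto[\phi^1_{L,L}(a)]$ is ``the $\mathit{HH}^*$-module action of $\mathit{Def}\cdot U$'' is not right: that module action is $[a]\mapsto[\phi^0_L]\cdot[a]$, which vanishes when $\phi^0_L=0$; what you actually use is the secondary ``derivation'' operation described after \eqref{eq:hh-2}, which is the correct input for \eqref{eq:contraction} but a different structure. Also, your opening assertion that the derivation induced by $\mathit{Def}$ equals the infinitesimal generator of the $\bC^*$-action on $\mathit{HF}^*(L,L)$ deserves a line of justification --- it follows because, on an equivariant $C$, the restricted $\Tw(\scrC)$-cocycle and the naive cocycle on $\EqTw(\scrC)$ differ by the coboundary of $\xi_C$ from the proof of Lemma~\ref{th:obstruction}. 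Neither point undermines the argument.
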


\begin{proof}
Let's restrict the weak Calabi-Yau structure to the full subcategory of Lagrangian branes that can be made equivariant. This induces a corresponding weak Calabi-Yau structure on a full subcategory $\scrT \subset \EqTw(\scrC)$. We correspondingly restrict $U$, and choose a cocycle $u \in \mathit{CC}^0(\scrT,\scrT)$ representing it. We can apply \eqref{eq:contraction} to the product of $\mathit{def}$ and $u$. The outcome is that for any object $C$ and any endomorphism $[a]$ of degree $n$,
\begin{equation} \label{eq:contraction-2}
\langle [u_C^0], [\mathit{def}^1_{C,C}(a)] \rangle_{\mathit{CY}} = \langle [e_C], [\mathit{def}^1_{C,C}(a)] \cdot [u_C^0] \rangle_{\mathit{CY}} = \langle [u_C^0], [a] \rangle_{\mathit{CY}}.
\end{equation}
By assumption, $C$ corresponds to a Lagrangian brane, hence its endomorphism space is one-dimensional in degree $n$. Hence, \eqref{eq:contraction-2} shows that the $\bC^*$-action has weight $1$ in that degree. By carrying this over to the Fukaya category, one obtains the desired statement.
\end{proof}

Lemma \ref{th:dilations} allows us to compare Definition \ref{th:dilating-action} with the more geometric notion of ``dilation'' from \cite{seidel-solomon10}. Recall that there is an open-closed string map from the symplectic cohomology of $M$ to the Hochschild cohomology of its Fukaya category,
\begin{equation} \label{eq:open-closed}
\mathit{SH}^*(M) \longrightarrow \mathit{HH}^*(\scrF(M),\scrF(M)).
\end{equation}
Moreover, this relates the BV operator $\Delta$ on symplectic cohomology with the operator $\Delta_{\mathit{CY}}$ associated to the preferred weak Calabi-Yau structure of $\scrF(M)$. Hence, if we have a class in $\mathit{SH}^1(M)$ which is a dilation in the sense of \cite[Definition 4.1]{seidel-solomon10}, its image in Hochschild cohomology satisfies \eqref{eq:quasi}, with $U$ being the identity. In fact, a better geometric counterpart of \eqref{eq:quasi} is the notion of ``quasi-dilation'' from \cite[Part 4]{seidel13b}, which generalizes that of dilation.

\begin{remark}
Another way of explaining the appearance of $U$ in \eqref{eq:quasi} is as follows. The conclusion of Lemma \ref{th:dilations} would hold even if one replaced the geometrically given weak Calabi-Yau structure of $\scrF(M)$ by another one. Clearly, two such structures \eqref{eq:weak-cy} differ by an automorphism of the diagonal bimodule, which is the same as an invertible element of $\mathit{HH}^0(\scrF(M),\scrF(M))$. If we change \eqref{eq:weak-cy} by such an element $U$, the effect is to replace $\Delta_{\mathit{CY}}$ by $U^{-1} \Delta_{\mathit{CY}} U$.

In terms of mirror symmetry (as discussed in \cite[Section 1]{seidel-solomon10}), the choice of weak Calabi-Yau structure corresponds to that of a complex volume form on the mirror. Just as the Fukaya category comes with a preferred such structure, the mirror comes with a preferred complex volume form. A dilation corresponds roughly to a holomorphic vector field which is expanding for that volume form; whereas a quasi-dilation would be a vector field which is expanding for {\em some} volume form.
\end{remark}

There are further differences between the approach taken here and that in \cite{seidel-solomon10}. For one thing, there are $\bC^*$-actions which ``do not come from geometry'' on an infinitesimal level, in the sense that the element $\mathit{Def} \in \mathit{HH}^1(\scrF(M),\scrF(M))$ does not lie in the image of \eqref{eq:open-closed} (the $\bC^*$-action from Example \ref{th:cylinder-2} is one of them). A more substantial difference is that \cite{seidel-solomon10} allowed arbitrary infinitesimal symmetries; clearly, not all of them integrate to $\bC^*$-actions (Lemma \ref{th:exponentiating} gives a necessary, but not sufficient, criterion).

\subsection{Fukaya categories of Lefschetz fibrations}
We work in Setup \ref{th:symplectic-lefschetz}. Choose a basis of vanishing paths $(\gamma^k)$, with its associated Lefschetz thimbles $(\Delta^k)$ and vanishing cycles $(V^k)$. Take the full $A_\infty$-subcategory of $\scrF(F)$ with objects $V^k$. By a standard algebraic process \cite[Lemma 2.1]{seidel04}, one can find a quasi-isomorphic $A_\infty$-category, which is strictly unital (and retains the property that the $\hom$ spaces are finite-dimensional). Let $\scrA$ be the resulting directed $A_\infty$-subcategory. We denote by $X^k$ the object of $\scrA$ corresponding to $V^k$.

\begin{lemma}
There is a cohomologically full and faithful $A_\infty$-functor
\begin{equation} \label{eq:k-functor}
\scrK: \scrF(E) \longrightarrow \Tw(\scrA)
\end{equation}
with the following property, for any $L$:
\begin{equation} \label{eq:thimbles-floer}
\begin{aligned}
& H^*(\hom_{\Tw(\scrA)}(X^k,\scrK(L))) \iso \mathit{HF}^*(\Delta^k,L), \\
& H^*(\hom_{\Tw(\scrA)}(\scrK(L),X^k)) \iso \mathit{HF}^*(L,\Delta^k).
\end{aligned}
\end{equation}
(Note that even though $\Delta^k \subset E$ is a submanifold with boundary, these Floer cohomology groups are well-defined, by a simple maximum principle argument).
\end{lemma}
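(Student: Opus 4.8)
The plan is to deduce the statement from the structural results on Lefschetz fibrations in \cite[Sections 17--18]{seidel04}, repackaged in the language of twisted complexes. Once two facts are in place the construction of $\scrK$ is essentially formal, and the real content is one geometric input.

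\textbf{The two inputs.}
First, although each thimble $\Delta^k \subset E$ is properly embedded with boundary on $\partial E$ rather than closed, the Floer complexes $CF^*(\Delta^i,\Delta^j)$, $CF^*(\Delta^k,L)$ and $CF^*(L,\Delta^k)$ (for $L$ a closed exact brane) are well-defined, with the usual continuation and invariance properties, because a maximum principle prevents holomorphic strips from escaping to infinity. So the thimbles generate a well-defined full $A_\infty$-subcategory $\scrA^\Delta$ of an enlargement $\scrF(E)^+$ of $\scrF(E)$ obtained by adjoining them as objects. Choosing the $\Delta^k$ compatibly with the distinguished basis of vanishing paths, \cite[Section 18]{seidel04} identifies $\scrA^\Delta$, up to quasi-isomorphism, with the directed category $\scrA$: on cohomology one has $\mathit{HF}^*(\Delta^i,\Delta^j)\cong \mathit{HF}^*(V^i,V^j)$ for $i<j$ (the thimbles may be isotoped to lie over arcs meeting only near $\ast$, which localizes the relevant moduli spaces to the fibre), $\mathit{HF}^*(\Delta^k,\Delta^k)\cong H^*(\Delta^k)\cong\bC$ since $\Delta^k$ is a ball, and $\mathit{HF}^*(\Delta^i,\Delta^j)=0$ for $i>j$; matching the full $A_\infty$-structures (not just the cohomology) is part of the construction of the directed Fukaya category of a fibration. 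Consequently $\Tw(\scrA^\Delta)\simeq\Tw(\scrA)$, and the image of $\Tw(\scrA^\Delta)$ inside $\Tw(\scrF(E)^+)$ is exactly the full subcategory generated by the thimbles. Second, and this is the genuinely geometric input, the closed exact branes in $E$ are generated by the thimbles: every such $L$ is, as an object of $\Tw(\scrF(E)^+)$, quasi-isomorphic to a twisted complex built from the $\Delta^k$. This is \cite[Section 18]{seidel04}, which exploits the description of $E$ (up to deformation) as built from $F\times D^2$ by attaching handles along the vanishing cycles, together with the exact triangles of \cite[Section 3]{seidel04}.

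\textbf{Assembling $\scrK$.}
Granting generation, the inclusion $\scrF(E)\hookrightarrow\Tw(\scrF(E)^+)$ factors (up to quasi-isomorphism) through the thimble-generated subcategory, hence through $\Tw(\scrA^\Delta)\simeq\Tw(\scrA)$; call the resulting $A_\infty$-functor $\scrK$, so that $\scrK$ sends the thimble $\Delta^k$ to the object $X^k$. It is cohomologically full and faithful because the twisted-complex embedding always is, and because morphism spaces in $\Tw(\scrF(E)^+)$ between twisted complexes of thimbles involve only the morphisms of $\scrA^\Delta$. Then \eqref{eq:thimbles-floer} is immediate: $H^*(\hom_{\Tw(\scrA)}(X^k,\scrK(L)))\cong H^*(\hom_{\Tw(\scrF(E)^+)}(\Delta^k,L))=\mathit{HF}^*(\Delta^k,L)$, and symmetrically for the second identity. (One can run the same argument through $A_\infty$-modules instead: $L\mapsto \scrY_L|_{\scrA^\Delta}$, regarded over $\scrA^\Delta\simeq\scrA$, is a module whose cohomology on each $X^k$ is finite-dimensional, hence by directedness and Proposition \ref{th:directed-module} the Yoneda image of a twisted complex, and generation is precisely what makes the restriction-of-Yoneda-modules functor cohomologically full and faithful; this route uses Lemmas \ref{th:proper-vs-strictly-proper} and \ref{th:module-vs-twisted} to produce $\scrK$ functorially at the chain level.)

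\textbf{Main obstacle.}
The step I expect to be the crux is the generation statement: it is where the symplectic geometry of the Lefschetz fibration is actually used, and it cannot be bypassed — the existence of a cohomologically full and faithful $\scrK$ of the asserted form is essentially equivalent to it. The remaining ingredients — Floer theory for thimbles via a maximum principle, the identification of $\scrA^\Delta$ with $\scrA$, and the passage from modules to twisted complexes over a directed category — are either recorded in \cite{seidel04} or assembled from Section \ref{sec:background}, and amount to bookkeeping.
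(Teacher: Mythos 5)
Your proposal takes essentially the same route as the paper: the enlargement $\scrF(E)^+$ you construct (closed branes plus thimbles) is the category $\scrF(\pi)$ of \cite[Section 18]{seidel04}, the identification of the thimble subcategory $\scrA^\Delta$ with the directed vanishing-cycle category $\scrA$ and the generation statement $\Tw(\scrA)\xrightarrow{\htp}\Tw(\scrF(\pi))$ are exactly \cite[Propositions 18.13, 18.14, 18.17, 18.23]{seidel04}, and $\scrK$ is obtained by inverting that quasi-equivalence and composing with $\scrF(E)\hookrightarrow\scrF(\pi)$, just as in the paper (which cites this lemma as essentially \cite[Corollary 18.25]{seidel04}). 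Your parenthetical module-theoretic variant via Proposition \ref{th:directed-module} is a valid repackaging but not a genuinely different argument, and you correctly isolate generation as the one nontrivial geometric input.
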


This is essentially \cite[Corollary 18.25]{seidel04}. The equality \eqref{eq:thimbles-floer} follows from the way the embedding is constructed in \cite{seidel04}, which we now recall briefly. One defines an $A_\infty$-category $\scrF(\pi)$ associated to the Lefschetz fibration. The objects are (roughly speaking) both exact Lagrangian branes in $E$ and Lefschetz thimbles. There are cohomologically full and faithful embeddings \cite[Propositions 18.13 and 18.14]{seidel04}
\begin{equation} \label{eq:2-emd}
\scrF(E) \longrightarrow \scrF(\pi) \longleftarrow \scrA.
\end{equation}
Consider the right-hand functor. This maps the object $X^k$ to the Lefschetz thimble $\Delta^k$. One shows \cite[Propositions 18.17 and 18.23]{seidel04} that it induces a quasi-equivalence $\Tw(\scrA) \longrightarrow \Tw(\scrF(\pi))$. One obtains \eqref{eq:k-functor} by inverting that quasi-equivalence, and then composing with the left-hand functor in \eqref{eq:2-emd}. In particular, 
\begin{equation}
\begin{aligned}
& H^*(\hom_{\Tw(\scrA)}(X^k,\scrK(L))) \iso H^*(\hom_{\scrF(\pi)}(\Delta^k,L)), \\
& H^*(\hom_{\Tw(\scrA)}(\scrK(L),X^k)) \iso H^*(\hom_{\scrF(\pi)}(L,\Delta^k)).
\end{aligned}
\end{equation}
Technically, $\scrF(\pi)$ is defined in \cite{seidel04} using a $\bZ/2$-symmetry trick, but it is straightforward to see that the outcome is isomorphic to the right hand side of \eqref{eq:thimbles-floer}.

\begin{figure}
\begin{centering}
\begin{picture}(0,0)%
\includegraphics{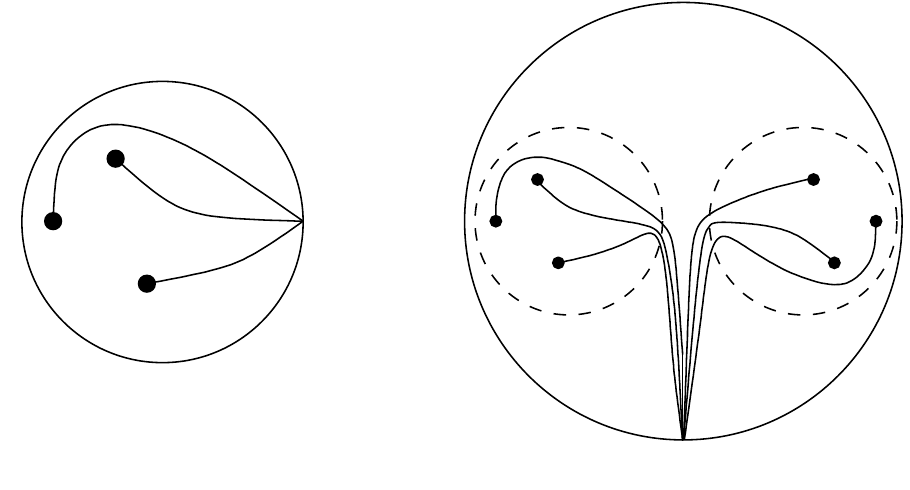}%
\end{picture}%
\setlength{\unitlength}{3947sp}%
\begingroup\makeatletter\ifx\SetFigFont\undefined%
\gdef\SetFigFont#1#2#3#4#5{%
  \reset@font\fontsize{#1}{#2pt}%
  \fontfamily{#3}\fontseries{#4}\fontshape{#5}%
  \selectfont}%
\fi\endgroup%
\begin{picture}(4363,2346)(1096,-1650)
\put(2124,-750){\makebox(0,0)[lb]{\smash{{\SetFigFont{10}{12.0}{\rmdefault}{\mddefault}{\updefault}{\color[rgb]{0,0,0}$\gamma^1$}%
}}}}
\put(1111,-1635){\makebox(0,0)[lb]{\smash{{\SetFigFont{10}{12}{\rmdefault}{\mddefault}{\updefault}{\color[rgb]{0,0,0}The original fibration}%
}}}}
\put(3368,-1635){\makebox(0,0)[lb]{\smash{{\SetFigFont{10}{12}{\rmdefault}{\mddefault}{\updefault}{\color[rgb]{0,0,0}The double branched cover}%
}}}}
\put(3844,-736){\makebox(0,0)[lb]{\smash{{\SetFigFont{10}{12.0}{\rmdefault}{\mddefault}{\updefault}{\color[rgb]{0,0,0}$\tilde{\gamma}^1$}%
}}}}
\put(4673,-163){\makebox(0,0)[lb]{\smash{{\SetFigFont{10}{12.0}{\rmdefault}{\mddefault}{\updefault}{\color[rgb]{0,0,0}$\tilde{\gamma}^{m+1}$}%
}}}}
\end{picture}%
\caption{\label{fig:double-fibre}}
\end{centering}
\end{figure}
An important player in our argument will be the double cover $\tilde{E}$ of $E$ branched along a fibre. This double cover is again the total space of a Lefschetz fibration, in the sense of Setup \ref{th:symplectic-lefschetz}, which we denote by
\begin{equation} \label{eq:tilde-pi}
\tilde{\pi}: \tilde{E} \longrightarrow \tilde{D}.
\end{equation}
Its fibre is the same $F$ as before. From the given basis of vanishing paths, one gets an induced basis $(\tilde{\gamma}^1,\dots,\tilde{\gamma}^{2m})$ for \eqref{eq:tilde-pi}, by the process shown in Figure \ref{fig:double-fibre}. The resulting basis of vanishing cycles consists of two copies of the previous basis:
\begin{equation} \label{eq:tilde-v}
\tilde{V}^k = \begin{cases} V^k & k \leq m, \\ V^{k-m} & k>m. \end{cases}
\end{equation}
In parallel with the previous construction, we have a directed $A_\infty$-category $\tilde{\scrA}$ with objects $(\tilde{X}^1,\dots,\tilde{X}^{2m})$ corresponding to our vanishing cycles. In addition, $\tilde{E}$ contains a collection of Lagrangian spheres $\Sigma^1,\dots,\Sigma^m$, the matching cycles associated to the paths $\sigma^1,\dots,\sigma^m$ from Figure \ref{fig:matching-path}.
\begin{figure}
\begin{centering}
\begin{picture}(0,0)%
\includegraphics{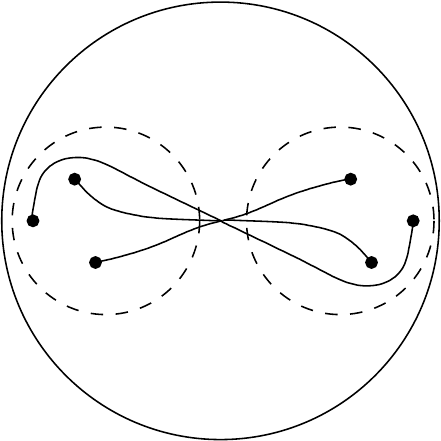}%
\end{picture}%
\setlength{\unitlength}{3947sp}%
\begingroup\makeatletter\ifx\SetFigFont\undefined%
\gdef\SetFigFont#1#2#3#4#5{%
  \reset@font\fontsize{#1}{#2pt}%
  \fontfamily{#3}\fontseries{#4}\fontshape{#5}%
  \selectfont}%
\fi\endgroup%
\begin{picture}(2116,2114)(3318,-1418)
\put(3915,-668){\makebox(0,0)[lb]{\smash{{\SetFigFont{10}{12.0}{\rmdefault}{\mddefault}{\updefault}{\color[rgb]{0,0,0}$\sigma^1$}%
}}}}
\end{picture}%
\caption{\label{fig:matching-path}}
\end{centering}
\end{figure}

Let $D^* \subset D \setminus \partial D$ be a slightly shrunk version of the disc $D$. Then, $E^* = \pi^{-1}(E)$ is again an exact symplectic manifold with corners, so one can consider the Fukaya category $\scrF(E^*)$. When forming the double branched cover $\tilde{E}$, one can take the branch fibre to lie outside $D^*$, and equip $\tilde{E}$ with the pullback of the symplectic form on $E$ plus a form supported in a small neighbourhood of that branch fibre. This means that $E^*$ can be identified with a subset of $\tilde{E}$, in a way which respects the symplectic forms. This identification also respects the other data that enter into the definition of the Fukaya category, namely the one-form primitives of the symplectic form (up to exact one-forms) and the trivialization of the canonical bundle (up to homotopy). More precisely, there are two choices for the inclusion $E^* \hookrightarrow \tilde{E}$, corresponding to the connected components of the preimage of $D^*$ in $\tilde{D}$. Our convention is to choose the component which intersects $\tilde{\gamma}^k$ for $k > m$ (in Figure \ref{fig:double-fibre}, this is the interior of the rightmost dotted circle).

\begin{lemma} \label{th:tilde-embedding}
The inclusions $E^* \hookrightarrow E$, $E^* \hookrightarrow \tilde{E}$ determine $A_\infty$-functors $\scrF(E^*) \rightarrow \scrF(E)$, $\scrF(E^*) \rightarrow \scrF(\tilde{E})$, of which the first one is a quasi-equivalence, and the second one cohomologically full and faithful.
\end{lemma}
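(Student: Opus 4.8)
The plan is to treat the two functors separately; the first is routine invariance and the second carries the real content. In both cases the functor attached to an inclusion $\iota$ is constructed in the familiar way: one sets up the Fukaya category of the larger manifold using Floer and perturbation data which, on tuples of branes contained in $E^*$, restrict to the data defining $\scrF(E^*)$, so that (a quasi-isomorphic copy of) $\scrF(E^*)$ sits inside as a full $A_\infty$-subcategory; "cohomologically full and faithful" and "quasi-equivalence" are then statements about these subcategory inclusions.

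\emph{The inclusion $E^* \hookrightarrow E$.} Since $D^*$ may be chosen to contain every critical value of $\pi$, over the annulus $D \setminus \overline{D^*}$ the fibration is a trivial exact symplectic fibration, so $E$ differs from $E^*$ only by a collar of its horizontal boundary; thus $E^*$ is obtained from $E$ by the sort of shrinking under which $\scrF$ is invariant (as in \cite[\S7]{seidel04}), and the inclusion realizes that quasi-equivalence. Concretely, full faithfulness follows from the usual maximum principle confining Floer strips and polygons with boundary on Lagrangians in $E^*$ to $E^*$ (compare \cite[\S15]{seidel04}), while essential surjectivity follows because the Liouville flow of $E$ deformation retracts $E$ onto $E^*$, so every exact Lagrangian brane in $E$ is quasi-isomorphic in $\scrF(E)$ to one supported in $E^*$.

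\emph{The inclusion $E^* \hookrightarrow \tilde E$.} The base $\tilde D$, a double cover of $D$ branched at one interior point chosen outside $\overline{D^*}$, is again a disc, and the preimage of $D^*$ in $\tilde D$ is a pair of disjoint embedded closed discs; our convention (the component meeting $\tilde\gamma^k$ for $k>m$) selects one of them, say $\overline{D^*_+}$, whose interior avoids the branch point, and then $E^* \cong \tilde\pi^{-1}(D^*_+) \subset \tilde E$. Away from the branch fibre the symplectic form on $\tilde E$ is the pullback of $\omega_E$, and — as recorded just before the lemma — the one-form primitive and the trivialization of the canonical bundle are matched as well, so the inclusion is a strict embedding of all data entering the definition of the Fukaya category. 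I would construct $\scrF(\tilde E)$ using almost complex structures that make $\tilde\pi$ pseudoholomorphic near the hypersurface $\tilde\pi^{-1}(\partial D^*_+)$ and convex near the fibrewise boundary, together with Hamiltonian perturbations supported in $E^*$ for tuples of $E^*$-branes. The crucial claim is that any pseudoholomorphic polygon contributing to an $A_\infty$-operation with all Lagrangian labels in $E^*$ is contained in $E^*$: were it to meet $\tilde\pi^{-1}(\partial D^*_+)$, then near there $\tilde\pi$ applied to it would be a holomorphic map into $\tilde D$ with boundary values in $\overline{D^*_+}$, which a maximum principle forbids; the fibrewise maximum principle of \cite[\S7, \S15]{seidel04} handles the fibre direction, and exactness excludes bubbling. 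Hence, with these choices, the full subcategory of $\scrF(\tilde E)$ on objects lying in $E^*$ is literally $\scrF(E^*)$, so the induced functor is cohomologically full and faithful; it fails to be a quasi-equivalence because matching cycles such as the $\Sigma^i$ run through the branch fibre and cannot be displaced into $E^*$.

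\emph{Main obstacle.} The delicate point is the confinement argument for the second functor: one must set up the moduli problems on $\tilde E$ (a globally defined $J$ extending the fibered one near $\tilde\pi^{-1}(\partial D^*_+)$, perturbations supported in $E^*$, transversality) so that Gromov compactness holds and the two maximum principles — in the base direction of $\tilde\pi$ near $\tilde\pi^{-1}(\partial D^*_+)$ and in the fibre direction near $\partial F$ — can be applied simultaneously, the critical region being where $\tilde\pi^{-1}(\partial D^*_+)$ meets the fibrewise boundary. This is entirely parallel to the treatment of Fukaya categories of exact Lefschetz fibrations in \cite{seidel04}, and I would import the relevant compactness and maximum-principle lemmas from there rather than redo the analysis.
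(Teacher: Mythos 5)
Your proposal is correct and follows essentially the same route as the paper: the paper handles both functors by "an easy maximum principle argument" for full faithfulness (which is exactly your confinement argument using $\tilde\pi$ near $\tilde\pi^{-1}(\partial D^*_+)$ and the standard fibrewise maximum principle), and obtains the quasi-equivalence for $E^*\hookrightarrow E$ by moving any exact Lagrangian brane in $E$ into $E^*$ via the Liouville flow, citing the proof of \cite[Proposition 18.13]{seidel04}. Your write-up merely spells out the confinement step in more detail than the paper, which compresses it into a one-line reference; there is no substantive difference.
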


The fact that one gets cohomologically full and faithful embeddings is an easy maximum principle argument. In the case of $E^* \hookrightarrow E$, one gets a quasi-equivalence because every exact Lagrangian brane in $E$ can be moved inside $E^*$ by the Liouville flow, which is an exact Lagrangian isotopy (compare the proof of \cite[Proposition 18.13]{seidel04}).

\begin{lemma} \label{th:generators}
If $L$ is an exact Lagrangian brane in $E^*$, there is some $k$ such that $\mathit{HF}^*(L,\Sigma^k) \neq 0$.
\end{lemma}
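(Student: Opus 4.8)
The plan is to reduce the statement to the generation property of Lefschetz thimbles (using directedness of $\scrA$), and then to recognise each matching cycle $\Sigma^k$ as a ``doubled thimble'' of $\pi$, so that it behaves toward $L$ exactly like the single thimble $\Delta^k$.

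First I would use the quasi-equivalence $\scrF(E^*) \xrightarrow{\sim} \scrF(E)$ of Lemma \ref{th:tilde-embedding} together with the embedding \eqref{eq:k-functor} to regard $L$ as a twisted complex $\scrK(L)$ over the directed $A_\infty$-category $\scrA$. Since $\scrK$ is cohomologically full and faithful, $H^*(\hom_{\Tw(\scrA)}(\scrK(L),\scrK(L))) \iso \mathit{HF}^*(L,L)$ is non-zero, so $\scrK(L)$ is not quasi-isomorphic to $0$. Now I would apply the cohomologically full and faithful Yoneda embedding $\Tw(\scrA) \to \Mod(\scrA)$ of \eqref{eq:tw-yoneda}: a module over a $\bC$-linear $A_\infty$-category all of whose values are acyclic is itself acyclic, so if $\hom_{\Tw(\scrA)}(X^k,\scrK(L))$ were acyclic for every $k$ then $\scrK(L)$ would vanish. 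Hence there is a $k$ with $H^*(\hom_{\Tw(\scrA)}(X^k,\scrK(L))) \neq 0$, which by \eqref{eq:thimbles-floer} says $\mathit{HF}^*(\Delta^k, L) \neq 0$ for the thimble $\Delta^k \subset E$ of $\pi$. (Equivalently, one could cite that the thimbles generate $\Tw(\scrF(\pi))$, via the quasi-equivalence $\Tw(\scrA) \to \Tw(\scrF(\pi))$ recalled after \eqref{eq:k-functor}.)

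Second, I would transport this to the branched cover $\tilde E$. By the construction of $\tilde\pi$ and the choice of matching paths $\sigma^k$ (Figures \ref{fig:double-fibre} and \ref{fig:matching-path}), the matching cycle $\Sigma^k$ is the union of the two thimbles $\tilde\Delta^k$, $\tilde\Delta^{k+m}$ of $\tilde\pi$ glued along $\tilde V^k = \tilde V^{k+m} = V^k$; and under the chosen inclusion $E^* \hookrightarrow \tilde E$ (the component meeting $\tilde\gamma^j$ for $j>m$) the thimble $\Delta^k$ is precisely the ``right half'' of $\Sigma^k$, i.e.\ $\Sigma^k \cap \tilde E_R$, where $\tilde E_R$ is the part of $\tilde E$ over the half of $\tilde D$ containing $E^*$. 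Since $L$ is a closed Lagrangian lying inside $E^*$, the maximum principle — the same one that makes the groups in \eqref{eq:thimbles-floer} well-defined — confines the relevant holomorphic strips to $\tilde E_R$, so they never reach the branch fibre or the second thimble comprising $\Sigma^k$; this yields $\mathit{HF}^*_{\tilde E}(\Sigma^k,L) \iso \mathit{HF}^*_E(\Delta^k,L) \neq 0$. Finally $\mathit{HF}^*(L,\Sigma^k) \neq 0$ as well, by Poincar\'e duality for Floer cohomology of closed exact branes (nondegeneracy of the pairing \eqref{eq:two-sided-product}).

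I expect the only non-routine point to be the geometric localisation in the second step: one must check that holomorphic strips with boundary on $L$ and $\Sigma^k$ cannot escape $\tilde E_R$, so that $\Sigma^k$ behaves, as far as $L$ is concerned, exactly like the single thimble $\Delta^k$. This is forced by the convexity that the construction of $\tilde\pi$ builds into the boundary of $\tilde E_R$; alternatively, one can package it as a cohomologically full and faithful restriction functor $\scrF(\tilde E_R) \to \scrF(\tilde E)$ parallel to Lemma \ref{th:tilde-embedding}, under which $\Delta^k \mapsto \Sigma^k$ and $L \mapsto L$, reducing the claim to the (already established) fact that the thimbles generate $\scrF(\tilde E_R) \simeq \scrF(E)$.
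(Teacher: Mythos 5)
Your proposal takes a genuinely different route from the paper. The paper reduces to \cite[Lemma 18.15]{seidel04} and sketches the monodromy argument: the global monodromy $\tau_{\Sigma^1}\cdots\tau_{\Sigma^m}$ of $\tilde\pi$ moves $L$ off itself, so if all $\mathit{HF}^*(L,\Sigma^k)$ vanished, the Dehn twist exact sequence of \cite{seidel01} would force $0 = \mathit{HF}^*(L,\tau_{\Sigma^1}\cdots\tau_{\Sigma^m}(L)) \iso \mathit{HF}^*(L,L)$, a contradiction. You instead feed the full faithfulness of $\scrK$ (a quoted generation result) into directedness to produce $k$ with $\mathit{HF}^*(\Delta^k,L)\neq 0$, and then try to identify this with $\mathit{HF}^*_{\tilde E}(\Sigma^k,L)$. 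Step 1 is correct: a twisted complex over a directed $A_\infty$-category is quasi-isomorphic to zero iff all $\hom(X^k,-)$ are acyclic.

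Step 2 has a gap. The identification $\mathit{HF}^*_{\tilde E}(\Sigma^k,L) \iso \mathit{HF}^*_E(\Delta^k,L)$ is true, but the maximum-principle mechanism you invoke does not deliver it: a Floer strip between $L$ and $\Sigma^k$ has one boundary component constrained to $\Sigma^k$, and $\Sigma^k$ itself straddles both halves of $\tilde E$. Neither the strip nor its projection to $\tilde D$ (whose boundary includes an arc on $\sigma^k$, a path crossing the dividing line) is barred from the left half by any boundary convexity; there is no hypersurface separating the two halves that both Lagrangians lie on one side of. Your alternative packaging as a ``restriction functor $\scrF(\tilde E_R)\to\scrF(\tilde E)$ with $\Delta^k\mapsto\Sigma^k$'' also does not parse: $\Delta^k$ is a disc with boundary and $\Sigma^k$ does not lie in $\tilde E_R$, so neither is an object of $\scrF(\tilde E_R)$. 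The clean way to close the gap is via Lemma \ref{th:matching-cycle} (quoted from \cite[Prop.\ 18.21]{seidel04}, so citing it ahead of its place in the text is harmless): $\tilde\scrK(\Sigma^k)\htp\mathit{Cone}(\tilde X^k\to\tilde X^{k+m})$, yielding an exact triangle that relates $\mathit{HF}^*(\Sigma^k,L)$ to $\mathit{HF}^*(\tilde\Delta^k,L)$ and $\mathit{HF}^*(\tilde\Delta^{k+m},L)$. The first of these vanishes because $\tilde\Delta^k$ (for $k\le m$) lies over paths disjoint from $D^*$, hence is disjoint from $L\subset E^*$ (this is the observation later recorded as \eqref{eq:no-left}), leaving $\mathit{HF}^*(\Sigma^k,L)\iso\mathit{HF}^*(\tilde\Delta^{k+m},L)=\mathit{HF}^*(\Delta^k,L)$; the last equality is where a genuine maximum-principle argument applies, and it works because both $\tilde\Delta^{k+m}$ and $L$ project into the right half of $\tilde D$. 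With that substitution your argument goes through, though it is considerably more roundabout than the paper's direct monodromy argument.
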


This is a direct consequence of \cite[Lemma 18.15]{seidel04} (ignoring the issue of $\bZ/2$-equivariance which is important there, but irrelevant here). To outline that argument quickly, the composition of all the Dehn twists $\tau_{\Sigma^k}$ maps $L$ to another Lagrangian submanifold in $\tilde{E}$ which is disjoint from $L$. But if $\mathit{HF}^*(L,\Sigma^k)$ were $0$ for all $k$, one would have $\mathit{HF}^*(L,\tau_{\Sigma^1}\cdots\tau_{\Sigma^m}(L)) \iso \mathit{HF}^*(L,L) \neq 0$ by the long exact sequence from \cite{seidel01}, which is a contradiction.

\begin{lemma} \label{th:matching-cycle}
Consider the analogue of \eqref{eq:k-functor} for the double branched cover, $\tilde{\scrK}: \scrF(\tilde{E}) \rightarrow \Tw(\tilde{\scrA})$. For a suitable choice of grading on $\Sigma^k$, there is a quasi-isomorphism $\tilde{\scrK}(\Sigma^k) \htp S^k$, where 
\begin{equation} \label{eq:matching-cone}
S^k = \mathit{Cone}(\tilde{X}^k \longrightarrow \tilde{X}^{k+m})
\end{equation}
is the mapping cone of a nonzero element of 
\begin{equation}
H^0(\hom_{\tilde{\scrA}}(\tilde{X}^i,\tilde{X}^{k+m})) \iso \mathit{HF}^0(\tilde{V}^k, \tilde{V}^{k+m}) = \mathit{HF}^0(V^k,V^k) \iso H^0(V^k;\bC) = \bC.
\end{equation}
\end{lemma}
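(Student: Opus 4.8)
The plan is to recognize the matching cycle $\Sigma^k$ inside $\tilde E$ as obtained from two Lefschetz thimbles glued along a vanishing cycle in $F$, and to transport that geometric decomposition through the functor $\tilde\scrK$ to obtain the algebraic mapping cone $S^k$. First I would recall the definition of a matching cycle: the path $\sigma^k$ in $\tilde D$ joins two critical values of $\tilde\pi$, and over it the vanishing cycles of $\tilde\pi$ at the two endpoints agree (here both equal $V^k \subset F$, by \eqref{eq:tilde-v}); $\Sigma^k$ is then the union of the two corresponding Lefschetz thimbles, a Lagrangian sphere. By the process in Figure \ref{fig:double-fibre}, the two critical values connected by $\sigma^k$ are precisely those associated to $\tilde\gamma^k$ and $\tilde\gamma^{k+m}$, so the two thimbles in question are (after the identifications implicit in \eqref{eq:2-emd} for $\tilde\pi$) the objects $\tilde X^k$ and $\tilde X^{k+m}$ of $\tilde\scrA$.

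Next I would invoke the standard fact (from \cite{seidel04}, essentially the computation underlying the long exact sequence for matching cycles, cf.\ \cite[Section 18]{seidel04}) that a matching cycle, viewed in $\Tw(\tilde\scrA)$ via $\tilde\scrK$, is quasi-isomorphic to the mapping cone of a degree-$0$ morphism between the two thimbles it is built from; the morphism is the one representing the ``obvious'' intersection point, i.e.\ a generator of $\mathit{HF}^0$ of the two coinciding vanishing cycles. So I need to identify $H^0(\hom_{\tilde\scrA}(\tilde X^k,\tilde X^{k+m}))$. Here the directedness of $\tilde\scrA$ and the ordering $k < k+m$ are exactly what is needed: $\hom_{\tilde\scrA}(\tilde X^k,\tilde X^{k+m})$ is, up to the quasi-isomorphism used to strictify, the Floer complex $\mathit{CF}^*(\tilde V^k,\tilde V^{k+m})$, and by \eqref{eq:tilde-v} this is $\mathit{CF}^*(V^k,V^k)$, whose cohomology is $H^*(V^k;\bC)$ by \eqref{eq:endo-hf}; in degree $0$ this is $\bC$. (One must be mildly careful that the ordering in the double cover places $\tilde\gamma^k$ before $\tilde\gamma^{k+m}$ and that the relevant Floer cohomology is computed ``in the directed direction'', which is why I chose the component convention for $E^* \hookrightarrow \tilde E$ intersecting $\tilde\gamma^k$ for $k>m$; this is the bookkeeping that makes \eqref{eq:matching-cone} an honest mapping cone of a morphism in $\tilde\scrA$ rather than something that requires passing to $\Tw$ more elaborately.) Choosing a grading on $\Sigma^k$ so that this generator sits in degree $0$ — which is possible since $\Sigma^k$ bounds a ball, so its grading is free up to shift — pins down the statement.

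Concretely the steps, in order, are: (1) unwind the definition of the matching cycle $\Sigma^k$ over $\sigma^k$ and identify its two constituent thimbles with the objects $\tilde X^k, \tilde X^{k+m}$; (2) compute $H^0(\hom_{\tilde\scrA}(\tilde X^k,\tilde X^{k+m})) \iso \mathit{HF}^0(\tilde V^k,\tilde V^{k+m}) = \mathit{HF}^0(V^k,V^k) \iso \bC$ using \eqref{eq:tilde-v}, directedness, and \eqref{eq:endo-hf}; (3) quote the Lefschetz-fibration computation of \cite{seidel04} (the surgery/triangle description of matching cycles) to conclude that $\tilde\scrK(\Sigma^k)$ is the cone on the corresponding nonzero degree-$0$ morphism, after fixing the grading of $\Sigma^k$ appropriately. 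The main obstacle I expect is step (3): one has to be sure that the functor $\tilde\scrK$ — which was built by inverting the quasi-equivalence $\Tw(\tilde\scrA)\to\Tw(\scrF(\tilde\pi))$ and composing with $\scrF(\tilde E)\to\scrF(\tilde\pi)$ — genuinely carries the geometric surgery presentation of $\Sigma^k$ to the algebraic cone \eqref{eq:matching-cone}, and that the nontrivial $A_\infty$ data does not obstruct the cone having the claimed endomorphisms; this is exactly the content of the matching-cycle results in \cite[Section 18]{seidel04}, so I would cite those rather than reprove them, noting only that the relevant triangle is the image under $\scrF(\tilde E)\to\scrF(\tilde\pi)$ of the one exhibiting $\Sigma^k$ as surgery of the two thimbles $\Delta^k,\Delta^{k+m}$ of $\tilde\pi$.
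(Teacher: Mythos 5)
Your proposal is correct and follows essentially the same route as the paper: the paper's proof simply cites \cite[Proposition 18.21]{seidel04} (the matching-cycle triangle for the Fukaya category of a Lefschetz fibration) together with the observation that the quasi-equivalence $\Tw(\tilde\scrA) \simeq \Tw(\scrF(\tilde\pi))$ preserves mapping cones. One small remark: the aside about the choice of component for $E^* \hookrightarrow \tilde E$ (the one meeting $\tilde\gamma^k$ for $k>m$) is not what makes \eqref{eq:matching-cone} a well-formed cone here --- that convention matters later, for establishing the vanishing \eqref{eq:no-left}--\eqref{eq:no-right} and the block structure of $\tilde l_q$; the cone in this lemma is a statement purely about $\tilde\pi$, and the relevant ordering $k < k+m$ is automatic from the Hurwitz-type ordering in Figure \ref{fig:double-fibre}.
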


This is \cite[Proposition 18.21]{seidel04} combined with the quasi-equivalence $\Tw(\tilde{\scrA}) \htp \Tw(\scrF(\tilde\pi))$ (which, like any $A_\infty$-functor, respects mapping cones up to quasi-isomorphism).

\subsection{Equivariance and Lefschetz fibrations\label{subsec:proofs}}
Continuing in the previous setup, we now impose the assumption that $\scrF(F)$ admits a $\bC^*$-action, given by
\begin{equation} \label{eq:i-f}
\scrI: \scrF(F) \longrightarrow \Tw(\scrC), 
\end{equation}
such that each $\scrI(V^k)$ is quasi-isomorphic to an equivariant twisted complex $C^k$. In particular, one can now replace $\scrA$ with the quasi-isomorphic directed $A_\infty$-subcategory of $\EqTw(\scrC)$ associated to the collection of objects $(C^1,\dots,C^m)$; and this carries a naive $\bC^*$-action. In view of \eqref{eq:k-functor}, it follows that $\scrF(E)$ carries a $\bC^*$-action (in the sense of Definition \ref{th:fuk-has-an-action}). However, it is not clear from this construction whether the action is dilating, and we will therefore take a slightly more roundabout way approach.

Namely, consider \eqref{eq:tilde-pi}, and apply to it the same construction as before. This means that we consider the collection of objects $(\tilde{C}^1,\dots,\tilde{C}^{2m})$ in $\EqTw(\scrC)$ related to $(C^1,\dots,C^m)$ in the same way as in \eqref{eq:tilde-v}, and form the associated directed $A_\infty$-subcategory $\tilde{\scrA}$. This still carries a $\bC^*$-action, and comes with a cohomologically full and faithful $A_\infty$-functor
\begin{equation} \label{eq:tricky}
\scrF(E) \stackrel{\htp}{\longleftarrow} \scrF(E^*) \longrightarrow \scrF(\tilde{E}) 
\stackrel{\tilde{\scrK}}{\longrightarrow} \Tw(\tilde{\scrA}).
\end{equation}
Here, the first two arrows are taken from Lemma \ref{th:tilde-embedding}, and the last one is the analogue of \eqref{eq:k-functor} for the double branched cover. The proof of Theorem \ref{th:gm-picard-lefschetz} is then completed by showing the following:

\begin{lemma}
Suppose that the $\bC^*$-action on $\scrF(F)$ given by \eqref{eq:i-f} is dilating (in the sense of Definition \ref{th:dilating-action}). Then the $\bC^*$-action on $\scrF(E)$ from \eqref{eq:tricky} has the same property.
\end{lemma}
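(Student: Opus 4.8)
The plan is to establish the dilating property for the $\bC^*$-action on $\scrF(E)$ carried by \eqref{eq:tricky} by running the argument behind Lemma \ref{th:dilating-generators}, using the matching cycles $\Sigma^1,\dots,\Sigma^m\subset\tilde E$ (viewed through the functor \eqref{eq:tricky}) as a generating collection, and the mapping-cone description of Lemma \ref{th:matching-cycle} to control the relevant $\bC^*$-weights.

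First I would make each $\Sigma^k$ equivariant and pin down the $\bC^*$-action on its endomorphisms. By Lemma \ref{th:matching-cycle}, $\tilde\scrK(\Sigma^k)$ is quasi-isomorphic to $S^k=\mathit{Cone}(c\colon\tilde X^k\to\tilde X^{k+m})$, where $\tilde X^k$ and $\tilde X^{k+m}$ are both the equivariant twisted complex $C^k$, and $c$ represents the generator of $H^0(\hom_{\tilde\scrA}(\tilde X^k,\tilde X^{k+m}))\iso\mathit{HF}^0(V^k,V^k)\iso\bC$. Since the $\bC^*$-action is trivial on the identity in degree $0$, this class has weight $0$, so $c$ may be chosen $\bC^*$-invariant; hence $S^k$ is an equivariant twisted complex in $\tilde\scrA$, and via \eqref{eq:tricky} this equips $\Sigma^k$ with an equivariant structure. (Note $\Sigma^k\iso S^n$ with $H^1=0$, so $\mathit{Def}^0_{\Sigma^k}=0$ anyway.)

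The computation of $H^*(\hom_{\EqTw(\tilde\scrA)}(S^k,S^k))$ as a $\bC^*$-representation is the technical heart. Directedness gives $\hom_{\tilde\scrA}(\tilde X^{k+m},\tilde X^k)=0$ and $\hom_{\tilde\scrA}(\tilde X^k,\tilde X^k)=\hom_{\tilde\scrA}(\tilde X^{k+m},\tilde X^{k+m})=\bC e$ (degree $0$, weight $0$), while $H^*(\hom_{\tilde\scrA}(\tilde X^k,\tilde X^{k+m}))\iso\mathit{HF}^*(V^k,V^k)\iso H^*(S^{n-1};\bC)$, whose degree-$(n-1)$ part has weight $1$ — this last fact is precisely the hypothesis that the action on $\scrF(F)$ is dilating, applied to $V^k$. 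Chasing the exact triangles obtained from $\tilde X^k\xrightarrow{c}\tilde X^{k+m}\to S^k\to\tilde X^k[1]$ under $\hom(-,\tilde X^k)$, then $\hom(-,\tilde X^{k+m})$, and finally $\hom(S^k,-)$ (all $\hom$-complexes taken in $\EqTw(\tilde\scrA)$), one finds $H^*(\hom(S^k,\tilde X^k))\iso\bC$ in degree $1$ with weight $0$; $H^*(\hom(S^k,\tilde X^{k+m}))\iso\bC$ in degree $n$ with weight $1$ (composition with $c$ is an isomorphism in degree $0$, so only the degree-$(n-1)$, weight-$1$ summand survives); and finally $H^*(\hom(S^k,S^k))\iso\bC$ in degree $0$ (weight $0$, the unit $[e_{S^k}]$) and $\bC$ in degree $n$ with weight $1$. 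In particular the $\bC^*$-action on $\mathit{HF}^n(\Sigma^k,\Sigma^k)\iso H^n(\Sigma^k;\bC)\iso\bC$ has weight $1$.

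Finally I would run the duality argument from the proof of Lemma \ref{th:dilating-generators}, with the $\Sigma^k$ playing the role of the $L^k$ there. Let $L\subset E$ be any exact Lagrangian brane with $\mathit{Def}^0_L=0$, equipped with an equivariant structure; through \eqref{eq:tricky} it becomes an equivariant object, and its Floer cohomology computed there agrees, as a $\bC^*$-representation, with $\mathit{HF}^*(L,L)$, since both are the cohomology of the corresponding morphism complex in $\EqTw(\tilde\scrA)$. By Lemma \ref{th:generators} there is a $k$ with $\mathit{HF}^*(L,\Sigma^k)\neq 0$. Composition in $\EqTw(\tilde\scrA)$ is $\bC^*$-equivariant, and the nondegenerate symmetric pairings \eqref{eq:two-sided-product} on $\scrF(\tilde E)$ give isomorphisms of $\bC^*$-representations
\[
\mathit{HF}^{n-*}(\Sigma^k,L)^\vee\otimes\mathit{HF}^n(\Sigma^k,\Sigma^k)\iso\mathit{HF}^*(L,\Sigma^k)\iso\mathit{HF}^{n-*}(\Sigma^k,L)^\vee\otimes\mathit{HF}^n(L,L),
\]
so the nonvanishing forces the weights of $\bC^*$ on $\mathit{HF}^n(\Sigma^k,\Sigma^k)$ and $\mathit{HF}^n(L,L)$ to agree; by the previous step that weight is $1$. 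This shows the $\bC^*$-action on $\scrF(E)$ from \eqref{eq:tricky} is dilating, completing the proof of Theorem \ref{th:gm-picard-lefschetz}. I expect the cone computation of the third paragraph to be the main obstacle — one must keep track of which classes survive in the connecting maps and check that the bookkeeping is uniform (the small cases $n=2,3$ turn out not to be exceptional) — while the rest is assembled directly from results already in hand.
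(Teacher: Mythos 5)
Your proof is correct and follows essentially the same route as the paper: make each matching cycle $\Sigma^k$ equivariant via the cone description $S^k = \mathit{Cone}(\tilde X^k \to \tilde X^{k+m})$ with a weight-$0$ cocycle, compute the $\bC^*$-weight on $\mathit{HF}^n(\Sigma^k,\Sigma^k)$, then combine Lemma \ref{th:generators} with the duality argument of Lemma \ref{th:dilating-generators}. The paper states the key computation \eqref{eq:inductive-sphere} by analogy with \eqref{eq:endo-compute} without spelling it out, whereas you verify it directly by chasing the long exact sequences attached to the cone triangle — this is exactly the content the paper's ``parallel to'' is referring to, and your bookkeeping (including the observation that composition with $c$ kills the degree-$0$, weight-$0$ piece, leaving only the degree-$(n-1)$, weight-$1$ class to shift into degree $n$) is accurate.
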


\begin{proof}
Because the naive $\bC^*$-action on $\tilde{\scrA}$ is inherited from that on $\scrC$, the action on 
\begin{equation}
H^0(\hom_{\tilde\scrA}(\tilde{X}^k,\tilde{X}^{k+m})) \iso H^0(\hom_{\EqTw(\scrC)}(C^k,C^k)) \iso \bC
\end{equation}
is trivial. Hence, the mapping cone $S^k$ in \eqref{eq:matching-cone} can be formed with respect to a $\bC^*$-invariant cocycle, and thereby becomes an equivariant twisted complex. Because of Lemma \ref{th:matching-cycle}, this provides an equivariant structure for $\Sigma^k$. There is a canonical isomorphism, parallel to \eqref{eq:endo-compute}, 
\begin{equation} \label{eq:inductive-sphere}
H^*(\hom_{\Tw(\tilde\scrA)}(S^k,S^k)) \iso
\begin{cases} \bC \, [e_C] & \ast = 0, \\
H^{n-1}(\hom_{\EqTw(\scrC)}(C^k,C^k)) & \ast = n, \\
0 & \text{in all other degrees.}
\end{cases} 
\end{equation}
This is compatible with the $\bC^*$-actions, which by assumption means that the $\bC^*$-action on $\mathit{HF}^n(\Sigma^k,\Sigma^k) \iso H^n(\hom_{\Tw(\tilde{\scrA})}(S^k,S^k))$ has weight $1$. In view of Lemmas \ref{th:dilating-generators} and \ref{th:generators}, this implies the desired result.
\end{proof}

\begin{remark}
The outcome of this argument is slightly stronger than the formulation in Theorem \ref{th:gm-picard-lefschetz}: instead of a dilating $\bC^*$-action on $\scrF(F)$, one only needs such an action on the full $A_\infty$-subcategory of $\scrF(F)$ with objects $(V^1,\dots,V^m)$.
\end{remark}

We now turn to the concrete consequences for $q$-intersection numbers. Take the matrix $B_q$ from \eqref{eq:q-b-matrix}, which in algebraic terms means that $B_{q,ij} = C^i \cdot_q C^j$ is the equivariant Mukai pairing on $\EqTw(\scrC)$. Because of the definition of $\tilde{\scrA}$ as a directed subcategory, the equivariant Mukai pairing on 
\begin{equation} \label{eq:2m-k}
K_0^{\bC^*}(\tilde{\scrA}) \iso \bZ[q,q^{-1}]^{2m}
\end{equation}
is described by the matrix $\tilde{A}_q \in \mathit{Mat}(2m \times 2m, \bZ[q,q^{-1}])$ whose entries are
\begin{equation} \label{eq:tilde-a-q}
\tilde{A}_{q,ij} = \begin{cases} 
B_{q,ij} & i<j\leq m, \\
B_{q,(i-m)(j-m)} & m<i<j, \\
B_{q,i(j-m)} & i\leq m <j, \\
1 & i = j, \\
0 & i>j.
\end{cases}
\end{equation}
More succinctly, with $A_q$ as in \eqref{eq:q-a-matrix}, one can rewrite \eqref{eq:tilde-a-q} in block form as
\begin{equation} \label{eq:block-form}
\tilde{A}_q = \begin{pmatrix} A_q & B_q \\ 0 & A_q \end{pmatrix}.
\end{equation}

\begin{lemma}
Let $L$ be an exact Lagrangian brane in $E$, and $C \in \Ob(\Tw(\tilde\scrA))$ its image under \eqref{eq:tricky}. Then for all $k \leq m$,
\begin{align} 
\label{eq:no-left} & H^*(\hom_{\Tw(\tilde{\scrA})}(\tilde{X}^k,C)) = 0, \\
\label{eq:no-right} & H^*(\hom_{\Tw(\tilde{\scrA})}(C,\tilde{X}^k)) = 0.
\end{align}
\end{lemma}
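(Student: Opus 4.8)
The plan is to reduce both vanishing statements to a disjointness argument in the double branched cover. First I would recall that, by Lemma \ref{th:tilde-embedding}, the functor $\scrF(E^*) \to \scrF(E)$ is a quasi-equivalence whose quasi-inverse is realized by pushing a brane inside $E^*$ with the Liouville flow; hence $L$ may be moved inside $E^*$ without changing its image $C$ under \eqref{eq:tricky}, and I continue to write $L$ for the result, now regarded as a Lagrangian in $\tilde{E}$ via the chosen inclusion $E^* \hookrightarrow \tilde{E}$. Unwinding the definition of \eqref{eq:tricky}, $C$ is quasi-isomorphic to $\tilde{\scrK}(L)$, where $\tilde{\scrK}\colon \scrF(\tilde{E}) \to \Tw(\tilde{\scrA})$ is the analogue of \eqref{eq:k-functor} for $\tilde{\pi}$. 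The analogue of \eqref{eq:thimbles-floer} for the double branched cover then gives
\[
H^*(\hom_{\Tw(\tilde{\scrA})}(\tilde{X}^k, C)) \iso \mathit{HF}^*(\tilde{\Delta}^k, L), \qquad H^*(\hom_{\Tw(\tilde{\scrA})}(C, \tilde{X}^k)) \iso \mathit{HF}^*(L, \tilde{\Delta}^k),
\]
where $\tilde{\Delta}^k \subset \tilde{\pi}^{-1}(\tilde{\gamma}^k)$ is the Lefschetz thimble over the vanishing path $\tilde{\gamma}^k$. So it suffices to show that $\tilde{\Delta}^k$ and $L$ are disjoint for $k \le m$.

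For the disjointness, I would invoke the convention fixing $E^* \hookrightarrow \tilde{E}$: one identifies $E^*$ with $\tilde{\pi}^{-1}(P)$, where $P \subset \tilde{D}$ is the connected component of the preimage of $D^*$ that meets $\tilde{\gamma}^k$ for $k > m$ (the interior of the rightmost dotted circle in Figure \ref{fig:double-fibre}). Since $D^* \subset D \setminus \partial D$, the two components of its preimage are disjoint interior discs, and by the construction of the basis $(\tilde{\gamma}^1,\dots,\tilde{\gamma}^{2m})$ pictured there, the paths $\tilde{\gamma}^k$ with $k \le m$ lie in the complementary region and can be taken disjoint from $P$. As $\tilde{\Delta}^k \subset \tilde{\pi}^{-1}(\tilde{\gamma}^k)$, it is then disjoint from $\tilde{\pi}^{-1}(P) = E^* \supset L$.

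Finally, I would conclude by the standard fact that disjoint exact Lagrangians have vanishing Floer cohomology: choosing the Hamiltonian perturbation defining $\mathit{HF}^*(\tilde{\Delta}^k, L)$ (resp. $\mathit{HF}^*(L, \tilde{\Delta}^k)$) small, or supported near $L$ away from $\tilde{\Delta}^k$, the perturbed Lagrangians stay disjoint, so the Floer complex has no generators. (Here one also uses the maximum principle remark after \eqref{eq:thimbles-floer}, which guarantees these groups are well defined despite $\tilde{\Delta}^k$ having boundary.) The only genuinely nontrivial point in the whole argument is the geometric claim that the vanishing paths $\tilde{\gamma}^k$ with $k \le m$ can be arranged disjoint from $P$; this is immediate from Figure \ref{fig:double-fibre} together with the stated convention for the inclusion $E^* \hookrightarrow \tilde{E}$, but it is where all the content lies, so that is the step I expect to require the most care in writing out.
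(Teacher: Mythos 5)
Your proof is correct and follows the same route as the paper: move $L$ into $E^*$ via the quasi-equivalence of Lemma \ref{th:tilde-embedding}, use the analogue of \eqref{eq:thimbles-floer} for the double branched cover to identify the hom spaces with $\mathit{HF}^*(\tilde{\Delta}^k,L)$ and $\mathit{HF}^*(L,\tilde{\Delta}^k)$, and then observe that for $k \le m$ the thimbles $\tilde{\Delta}^k$ are fibered over paths disjoint from the chosen copy of $D^*$ in $\tilde{D}$, hence are disjoint from $L$. Your version simply spells out the identification of $E^*$ with $\tilde{\pi}^{-1}(P)$ and the two-component structure of the preimage of $D^*$ a bit more explicitly than the paper does.
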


\begin{proof}
Without loss of generality, we can assume that $L$ lies in $E^*$. By applying Lemma \ref{eq:thimbles-floer} to the double branched cover, one sees that the groups in \eqref{eq:no-left}, \eqref{eq:no-right} are the Floer cohomology groups between $L$ and the Lefschetz thimbles $\tilde\Delta^k \subset \tilde{E}$, $k \leq m$. But those thimbles are fibered over paths $\tilde{\gamma}^k$ which are disjoint from $D^* \subset \tilde{D}$ (see Figure \ref{fig:double-fibre}); hence the thimbles are disjoint from $L$.
\end{proof}

Let $L \in \Ob(\scrF(E))$ be an exact Lagrangian brane, and suppose that it carries an equivariant structure given by a quasi-isomorphism between its image in $\mathit{Tw}(\tilde{\scrA})$ and some equivariant twisted complex $\tilde{C}$. We denote the equivariant $K$-theory class of $\tilde{C}$ in \eqref{eq:2m-k} by $\tilde{l}_q$. Recalling that the isomorphism in \eqref{eq:2m-k} is obtained by taking the classes of the $\tilde{X}^k$ as basis vectors, one sees that \eqref{eq:no-right} implies that 
\begin{equation} \label{eq:tilde-l}
\tilde{l}_q = (0,\dots,0,l_q) \in \{(0,\dots,0)\} \times \bZ[q,q^{-1}]^m \subset \bZ[q,q^{-1}]^{2m}.
\end{equation}
With that in mind, \eqref{eq:no-left} shows that \eqref{eq:bq-null} holds.

Take two branes $L_i$ ($i = 0,1$) of the same kind as before, with equivariant structures $\tilde{C}_i$ and equivariant $K$-theory classes $\tilde{l}_{i,q} = (0,\dots,0,l_{i,q})$ as in \eqref{eq:tilde-l}. In view of \eqref{eq:block-form}, the equivariant Mukai pairing between those classes is given by
\begin{equation}
L_0 \cdot_q L_1 = \tilde{C}_0 \cdot_q \tilde{C}_1 = \tilde{l}_{0,q}^*\, \tilde{A}_q\, \tilde{l}_{1,q} = l_{0,q}^*\, A_q\, l_{1,q}.
\end{equation}
This is \eqref{eq:obtain-bullet}. To conclude the proof of Theorem \ref{th:main}, it only remains to show that specializing to $q = 1$ recovers the topological theory from Section \ref{subsec:classical}. We already know from the construction that the $q = 1$ values of $A_q$ and $B_q$ reproduce \eqref{eq:a-matrix} and \eqref{eq:b-matrix}, respectively. If we take $l_q$ and specialize to $q = 1$, we get an $l \in \bZ^n$ whose entries are determined by
\begin{equation}
e^{k,*} \,A \,l = \chi(H^*(\hom_{\Tw(\scrC)}(\tilde{X}^{k+m},\tilde{C}))),
\end{equation}
where $e^k$ is the $i$-th unit vector. Using \eqref{eq:thimbles-floer}, one rewrites this as
\begin{equation}
e^{k,*}\, A \, l = \chi(\mathit{HF}^*(\tilde{\Delta}^{k+m},L)) = \Delta^k \cdot_\pi L.
\end{equation} 
This shows that with respect to the isomorphism $\bZ^m \iso H_\pi$ given by our basis of Lefschetz thimbles, $l$ corresponds to the homology class $[L]$.

\subsection{The examples\label{subsec:the-examples}}
As a stepping-stone, we consider the Milnor fibres $Y$ from Example \ref{th:an-milnor}, assuming that $n \geq 3$. The Lefschetz fibration mentioned in Example \ref{th:simple-milnor} has been analyzed exhaustively in \cite[Section 20]{seidel04}, whose results we recall now (in slightly different notation). Define a directed $A_\infty$-category $\scrC$ with $r+1$ objects and
\begin{equation}
\begin{aligned}
& \hom_{\scrC}(X^i,X^j) = \bC \cdot e^{ji} \oplus \bC \cdot f^{ji} \quad \text{for all $i<j$,} \\
& \text{with } |e^{ji}| = 0, \;\; |f^{ji}| = n-2.
\end{aligned}
\end{equation}
The only nontrivial $A_\infty$-compositions between these generators are
\begin{equation}
\begin{aligned}
& \mu^2_{\scrC}(e^{kj},e^{ji}) = e^{ki}, \\
& \mu^2_{\scrC}(f^{kj},e^{ji}) = f^{ki}, \\
& \mu^2_{\scrC}(e^{kj},f^{ji}) = (-1)^{n-2} f^{ki}.
\end{aligned}
\end{equation}
There is a cohomologically full and faithful embedding
\begin{equation}
\scrI: \scrF(Y) \longrightarrow \Tw(\scrC),
\end{equation}
which takes the Lagrangian spheres $\Sigma^1,\dots,\Sigma^{r+1}$ (with appropriate choices of gradings) from Figure \ref{fig:cyclic} to the twisted complexes
\begin{equation} \label{eq:r+1-twisted}
S^1 = \mathit{Cone}(e^{2,1}), \; \cdots, \; S^r =\mathit{Cone}(e^{r+1,r}), \;
S^{r+1} = \mathit{Cone}(e^{r+1,1})[1].
\end{equation}
%
%
%
Moreover, these spheres (or even any $r$ of them) are split-generators for $\scrF(M)$ \cite[Equation (20.3)]{seidel04}. Concretely, this means that for any exact Lagrangian brane $L \subset M$ there is some $k$ such that $\mathit{HF}^*(L,\Sigma^k) \neq 0$.

Equip $\scrC$ with the naive $\bC^*$-action which has weight $0$ on the $e^{ji}$, and weight $1$ on the $f^{ji}$ (it also acts trivially on the identity endomorphisms of each object, of course). Then \eqref{eq:r+1-twisted} describes equivariant twisted complexes. A computation parallel to \eqref{eq:inductive-sphere} shows that the $\bC^*$-action on $H^{n-1}(\hom_{\Tw(\scrC)}(S^k,S^k))$ has weight $1$ for all $k$. Applying Lemma \ref{th:dilating-generators}, it follows that this equips $\scrF(M)$ with a dilating $\bC^*$-action.

The equivariant Mukai pairing on $K_0^{\bC^*}(\scrC) \iso \bZ[q,q^{-1}]^{r+1}$ is described by the upper-triangular matrix
\begin{equation} \label{eq:milnor-mukai}
\begin{pmatrix} 1 & 1+q(-1)^n & 1+q(-1)^n && \cdots && 1+q(-1)^n \\
& 1 & 1+q(-1)^n &&&& 1+q(-1)^n \\ && 1 &&&& \cdots \\
 &&& \cdots & \\ &&&&& 1 & 1+q(-1)^n \\
&&&&& & 1
\end{pmatrix}
\end{equation}
By construction, the classes of \eqref{eq:r+1-twisted} in this equivariant Grothendieck group are
\begin{equation} \label{eq:sq-classes}
[S^1] = (0,\dots,0,-1,1),\; \cdots,\; [S^r] = (-1,1,0,\dots,0),\; [S^{r+1}] = (1,0,\dots,0,-1).
\end{equation}
Plugging these vectors into \eqref{eq:milnor-mukai} recovers \eqref{eq:deformed-s-matrix}. With this as an input, all the further computations take place entirely within the framework of $q$-intersection numbers.

\begin{example}
Consider the Lefschetz fibration \eqref{eq:fieseler}, whose fibre is the previous $Y$ (still assuming $n \geq 3$). The first vanishing cycle from Figure \ref{fig:ab-cycles} is 
\begin{equation}
V^1 = \tau_{\Sigma^a} \cdots \tau_{\Sigma^2}(\Sigma^1).
\end{equation}
Under the embedding $\scrF(X) \rightarrow \Tw(\scrC)$, the image of $V^1$ is therefore quasi-isomorphic to the equivariant twisted complex $C^1 = T_{S^a} \cdots T_{S^2}(S^1)$.
Combining \eqref{eq:twisted-k-theory}, \eqref{eq:milnor-mukai}, and \eqref{eq:sq-classes}, we get
\begin{equation} \label{eq:k-theory-relation-s}
[C^1] = [S^1] + \cdots + [S^a] \in K_0^{\bC^*}(\scrC).
\end{equation}
For the other $V^i$ and the corresponding equivariant twisted complexes $C^i$, there are similar cyclically rotated expressions, which refine the corresponding homological identities \eqref{eq:s-relation}. For the $q$-intersection numbers, this means that
\begin{equation}
V^i \cdot_q V^j = C^i \cdot_q C^j = \sum_{s=0}^{a-1} \sum_{t=0}^{a-1} S^{i+s} \cdot_q S^{j+t},
\end{equation}
which indeed recovers \eqref{eq:boldb}.
\end{example}
\begin{figure}
\begin{centering}
\begin{picture}(0,0)%
\includegraphics{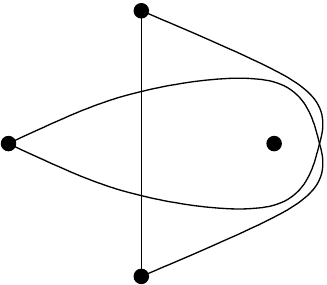}%
\end{picture}%
\setlength{\unitlength}{3355sp}%
\begingroup\makeatletter\ifx\SetFigFont\undefined%
\gdef\SetFigFont#1#2#3#4#5{%
  \reset@font\fontsize{#1}{#2pt}%
  \fontfamily{#3}\fontseries{#4}\fontshape{#5}%
  \selectfont}%
\fi\endgroup%
\begin{picture}(1835,1594)(3703,-1308)
\put(3826,-886){\makebox(0,0)[lb]{\smash{{\SetFigFont{10}{12.0}{\rmdefault}{\mddefault}{\updefault}{\color[rgb]{0,0,0}$V^3$}%
}}}}
\put(4576,-586){\makebox(0,0)[lb]{\smash{{\SetFigFont{10}{12.0}{\rmdefault}{\mddefault}{\updefault}{\color[rgb]{0,0,0}$V^1$}%
}}}}
\put(3901,-286){\makebox(0,0)[lb]{\smash{{\SetFigFont{10}{12.0}{\rmdefault}{\mddefault}{\updefault}{\color[rgb]{0,0,0}$V^2$}%
}}}}
\end{picture}%
\caption{\label{fig:mirrorp2-again}}
\end{centering}
\end{figure}

\begin{example}
In the situation of Examples \ref{th:mirrorp2-example} and \ref{th:q-mirrorp2}, one can redraw Figure \ref{fig:mirrorp2} in a less symmetric way as Figure \ref{fig:mirrorp2-again}. This allows one to write the vanishing cycles in terms of the $\Sigma^i$ from Figure \ref{fig:cyclic} as
\begin{equation}
\begin{aligned}
& V^1 = \tau_{\Sigma^2}(\Sigma^1), \\
& V^2 = \tau_{\Sigma^2}^{-1}\tau_{\Sigma^1}(\Sigma^4), \\
& V^3 = \tau_{\Sigma^3}\tau_{\Sigma^1}(\Sigma^2).
\end{aligned}
\end{equation}
The counterpart of \eqref{eq:k-theory-relation-s} is then
\begin{equation}
\begin{aligned}
& [C^1] = [S^1] + [S^2], \\
& [C^2] = [S^1] - (-1)^n q^{-1} [S^2] + [S^4], \\ 
& [C^3] = -q(-1)^n[S^1] + [S^2] + [S^3].
\end{aligned}
\end{equation}
Plugging those expressions into \eqref{eq:deformed-s-matrix} then yields the matrix \eqref{eq:boldb-2}. 
\end{example}


\end{document}